    \newcommand{\BA}{{\mathbb {A}}} 
    \newcommand{\BC}{{\mathbb {C}}} 
     \newcommand{\BF}{{\mathbb {F}}}
    \newcommand{\BQ}{{\mathbb {Q}}}
     \newcommand{\BZ}{{\mathbb {Z}}}
     \newcommand{\CL}{{\mathcal {L}}}
     \newcommand{\CN}{{\mathcal {N}}}
    \newcommand{\CO}{{\mathcal {O}}}
    \newcommand{\fa}{{\mathfrak{a}}} 
    \newcommand{\fc}{{\mathfrak{c}}}
    \newcommand{\fm}{{\mathfrak{m}}} 
     \newcommand{\fp}{{\mathfrak{p}}}
     \newcommand{\fX}{{\mathfrak{X}}}
    \newcommand{\Aut}{{\mathrm{Aut}}}
    \newcommand{\Cl}{{\mathrm{Cl}}}
    \newcommand{\disc}{{\mathrm{disc}}}
    \newcommand{\End}{{\mathrm{End}}}
      \newcommand{\Fitt}{{\mathrm{Fitt}}}
    \newcommand{\Gal}{{\mathrm{Gal}}} \newcommand{\GL}{{\mathrm{GL}}}
    \newcommand{\Hom}{{\mathrm{Hom}}}
    \renewcommand{\Im}{{\mathrm{Im}}}
    \newcommand{\ord}{{\mathrm{ord}}} \newcommand{\rank}{{\mathrm{rank}}}
     \newcommand{\Pic}{\mathrm{Pic}}
    \renewcommand{\mod}{\ \mathrm{mod}\ }
    \newcommand{\ram}{{\mathrm{ram}}}
    \newcommand{\Res}{{\mathrm{Res}}}
    \newcommand{\Sel}{{\mathrm{Sel}}} 
    \newcommand{\supp}{{\mathrm{supp}}}
      \newcommand{\Supp}{{\mathrm{Supp}}}
    \newcommand{\tor}{{\mathrm{tor}}}
    \newcommand{\vol}{{\mathrm{vol}}}
\newcommand{\matrixx}[4]{\begin{pmatrix}
#1 & #2 \\ #3 & #4
\end{pmatrix} }        
    \font\cyr=wncyr10
    \newcommand{\Sha}{\hbox{\cyr X}}
    \newcommand{\wh}{\widehat}
    \newcommand{\pair}[1]{\langle {#1} \rangle}
    \newcommand{\ov}{\overline}
    \newcommand{\lra}{\longrightarrow}
    \newcommand{\ra}{\rightarrow}
    \theoremstyle{plain}
    \newtheorem{thm}{Theorem}[section] \newtheorem{cor}[thm]{Corollary}
    \newtheorem{lem}[thm]{Lemma}  \newtheorem{prop}[thm]{Proposition}
    \newtheorem {conj}[thm]{Conjecture} \newtheorem{defn}[thm]{Definition}
\theoremstyle{remark} \newtheorem{remark}[thm]{Remark}
\theoremstyle{remark} 
\theoremstyle{remark} 
    \newcommand{\cO}{\mathcal O}
    \numberwithin{equation}{section}
\begin{document}

\title{Horizontal variation of Tate--Shafarevich groups}

\author{Ashay A. Burungale, Haruzo Hida, and Ye Tian}

\address{Ashay A. Burungale:  Institute for Advanced Study,
Einstein Drive, Princeton NJ 08540} \email{ashayburungale@gmail.com}

\address{Haruzo Hida: Department of Mathematics, UCLA, 
Los Angeles, CA 90095-1555, U.S.A.} \email{hida@math.ucla.edu}

\address{Ye Tian: Academy of Mathematics and Systems
Science, Morningside center of Mathematics, Chinese Academy of
Sciences, Beijing 100190 and \newline
School of Mathematical Sciences, University of Chinese Academy of Sciences, Beijing 10049} \email{ytian@math.ac.cn}

\maketitle
\begin{abstract}
Let $E$ be an elliptic curve over $\BQ$. 
Let $p$ be an odd prime and $\iota: \ov{\BQ}\hookrightarrow \BC_p$ an embedding. 
Let $K$ be an imaginary quadratic field and $H_{K}$ the corresponding Hilbert class field. 
For a class group character $\chi \in \widehat{\Cl}_{K}$, let $\BQ(\chi)$ be the field generated by $\Im(\chi)$ and 
$\fp_{\chi}$ the prime of $\BQ(\chi)$ above $p$ determined via $\iota_p$.
Under mild hypotheses, we show that the number of $\chi \in \widehat{\Cl}_{K}$ 
such that the $\chi$-isotypic Tate--Shafarevich group $\Sha(E/H_{K})^{\chi}$ is finite with trivial $\fp_{\chi}$-part increases with the absolute value of the discriminant of $K$.

We also show an analogous result simultaneously for $\Sha(E/H_{K})^{\chi}$ and the analytic Sha of a $\GL_2$-type abelian variety over $\BQ$. Here analytic Sha refers to the order of the conjecturally finite Tate--Shafarevich group as predicted by the Birch--Swinnerton-Dyer conjecture.

The approach is based on the arithmetic of automorphic forms on a quaternion algebra along with Euler system of toric periods and Heegner points.
An underlying role is played by Michel's Equidistribution
of special points on a Shimura set arising from a definite quaternion algebra and 
Zariski density of special points on a Shimura curve arising from an indefinite quaternion algebra.

\end{abstract}

\tableofcontents

\section{Introduction}

Tate--Shafarevich group is a mysterious arithmetic invariant
associated to a motive $\mathcal{M}$ over a number field $F$. It measures the failure of local to global principle for $\mathcal{M}$ and is conjectured to be finite. In view of the Bloch--Kato conjecture, the conjectural finite size
is expected to be closely related to a special value or a derivative of the $L$-function associated to $\mathcal{M}$.
As the motive varies in a well-controlled family, variation of the Tate--Shafarevich group in the family
seems to be a fundamental question.

An instructive setup arises from twists of $\mathcal{M}$ by Hecke characters over $K$ for a finite  extension $K/F$. Families arise in numerous ways. For example, we may fix $K$ and consider Hecke characters with fixed infinity type but allow the conductor to grow. 
This is usually referred as an Iwasawa or a vertical variation and it continues to be investigated in various situations. 
On the other hand, we may vary $K$ with fixed degree and consider Hecke characters with fixed infinity type and bounded conductor. 
We may refer this as a horizontal variation and it seems to be investigated in only few situations. 
In any of these families, we can ask for variation of the $p$-part of the Tate--Shafarevich group with $p$ a fixed prime.

For a fixed $\GL_2$-type abelian variety over the rationals and a prime $p$,  we hope to initiate the study of horizontal variation of the $p$-part of Tate-Shafarevich groups.
In this article, we consider the $\GL_2$-case for a specific horizontal family.
The family arises from the twists of the abelian variety by ideal class characters over imaginary quadratic fields.

Let us begin with a brief discussion of the $\GL_1$-case regarding ideal class groups. 
We first recall some history. 
Kummer defined $p$ to be a regular prime if $p \nmid |\Cl(\BQ(\zeta_{p}))|$ for the class group $\Cl(\BQ(\zeta_{p}))$ of the $p^{th}$-cyclotomic field $\BQ(\zeta_{p})$. 
Kummer showed that $p$ is an irregular prime if and only if $p$ divides the numerator of the Bernoulli number $B_k$ for some even integer $k$ with $2 \leq k \leq p-3$. 
After several decades, Herbrand and Ribet showed a refined version incorporating 
$\Gal(\BQ(\zeta_{p})/\BQ)$-action on the class group $\Cl(\BQ(\zeta_{p}))$. 
They showed that the $\omega_{p}^{1-k}$-isotypic class group $\Cl(\BQ(\zeta_{p}))^{\omega_{p}^{1-k}}$ has non-trivial $p$-part if and only if $p$ divides the numerator of the Bernoulli number $B_k$. 
Here 
$\omega_{p}$ denotes the Teichmuller character. 
As $p \nmid |\Gal(\BQ(\zeta_{p})/\BQ)|$, the action of $\Gal(\BQ(\zeta_{p})/\BQ)$ on $\Cl(\BQ(\zeta_{p}))$ is semi-simple.
In these results, the Bernoulli number $B_{k}$ can be replaced with the special zeta-value $\zeta(1-k)$.

We introduce some notation to state the results. 
Let $G$ be an abelian group and $\wh{G}$ the group of $\ov{\BQ}^\times$-valued characters of $G$.
For $\chi \in \wh{G}$, let $\BZ[\chi]$ be the ring generated over $\BZ$ by the image of $\chi$ and $\BQ(\chi)$ its fraction field. For any $\BZ[G]$-module $M$, let $M^\chi$ denote the $\chi$-isotypic part of $M\otimes_\BZ \BZ[\chi]$.   Fix embeddings $\iota_{\infty}:\overline{\BQ}\hookrightarrow \BC$,
$\iota_{p}:\overline{\BQ}\hookrightarrow \BC_{p}$ and an isomorphism $\iota: \BC \ra \BC_p$ such that $\iota_p=\iota\circ \iota_\infty$. Let $v_p$ be the valuation on $\BC_p$ with $v_p(p)=1$. For $\chi\in \wh{G}$, let $\fp_\chi$ be the prime ideal of $\BQ(\chi)$ determined by $\iota_p$.

We now introduce an analogue of Kummer's notion of regularity for Dirichlet characters. 
Let $\chi$ be a $\ov{\BQ}^\times$-valued Dirichlet character. Let $\BQ_\chi$ be the field cut out by $\chi$ and $\Cl(\BQ_\chi)$ its ideal class group with the action of $\Gal(\BQ_\chi/\BQ)$. Suppose that $p\nmid [\BQ_\chi: \BQ]$.
Following Kummer, we say that
\begin{itemize}
\item[] $\chi$ is a {\em $p$-regular} Dirichlet character  if  the $\chi$-isotypic class group $\Cl(\BQ_\chi)^{\chi}$
 has trivial $\fp_\chi$-part. \end{itemize} By Mazur--Wiles (\cite{MW}) and Wiles (\cite{W}),  it is equivalent to the Dirichlet $L$-value $L(0,\chi^{-1})$ being $\fp_{\chi}$-indivisible or
equivalently to the $p$-indivisibility of the Bernoulli number $B_{1,\chi^{-1}}$.
Recently, Burungale--Sun obtained results towards the number of $p$-regular Dirichlet characters 
(\cite{BuSu}). We refer to \cite[\S1]{BuSu} for their main results. In the Iwasawa setup, the results go back to Washington (\cite{Wa}) and Sinnott (\cite{Si}).



We now turn to $\GL_2$-type abelian varieties over the rationals. In the introduction, we restrict to elliptic curves for simplicity.

Let $K$ be an imaginary quadratic field and $D_{K}$ the discriminant. 
Let $K_\BA$ be the ring of adeles. Let $G_{K}$ denote the absolute Galois group over $K$. 
Let $\Cl_K$ be the ideal class group of $K$ and $h_K$ the ideal class number.   
Let $H_K$ be the Hilbert class field of $K$. 

Let $E$ be an elliptic curve over $\BQ$. There is a natural action of $\Cl_K$ on the Tate--Shafarevich group
$\Sha(E/H_{K})$ via Artin's reciprocity isomorphism $\Gal(H_{K}/K)\simeq \Cl_K$.  A basic question is the following.
\begin{itemize}
\item[(Q)] For a prime $p$, how does the $\BZ[\Cl_{K}]$-module $\Sha(E/H_{K})[p^{\infty}]$ vary as $K$-varies over imaginary quadratic fields?
\end{itemize}
 A fundamental conjecture of Tate asserts that $\Sha(E/L)$ is finite for a number field $L$.  We are not aware of any conjecture regarding the distribution of the above horizontal variation on $\Sha$. The horizontal variation seems to be complementary to the variation considered in Delaunay (\cite{De}) and Bhargava--Kane--Lenstra--Poonen--Rains (\cite{BKLPR}). 
The question is closely related to the variation of the $\BZ[\chi]$-module $\Sha(E/H_{K})^{\chi}[p^{\infty}]$ as the pair $(K,\chi)$-varies with $\chi \in \widehat{\Cl}_{K}$.

Recall that $\fp_\chi\subset \BZ[\chi]$ denotes the prime above $p$ of $\BQ(\chi)$ determined via the embedding $\iota_p$. We consider the horizontal variation of the $\fp_\chi$-primary part of $\Sha(E/H_K)^\chi$ in this article. 
This seems to be a slightly approachable question than the original (Q).

One of our main results is the following.
\begin{thm}\label{main0}
Let $E$ be a non-CM elliptic curve over $\BQ$.
Then, there exists an explicit finite subset $\Sigma_{E}$ of primes only dependent on $E$, such that for any fixed prime $p \notin \Sigma_{E}$ and $\epsilon > 0$ we have
$$
\#\bigg{\{} \chi \in \widehat{\Cl}_{K} \bigg{|}\ \Sha(E/H_K)^\chi\ \text{is finite with trivial $\fp_\chi$-part} \bigg{\}}\gg_\epsilon (\ln |D_K|)^{1-\epsilon}.
$$
Here $K$ varies over imaginary quadratic fields such that
\begin{itemize}
\item[(i)] $p\nmid h_K$, and
\item[(ii)] either $p$ splits in $K$ or the root number $\epsilon(E, \chi_0)$ of the Rankin--Selberg convolution $L(s, E, \chi_0)$ equals $+1$ for some $\chi_0\in \wh{\Cl}_K$.
\end{itemize}
\end{thm}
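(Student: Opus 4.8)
The plan is to reduce the finiteness-with-trivial-$\fp_\chi$-part assertion to a mod-$\fp_\chi$ nonvanishing statement for an automorphic period, and then to count the $\chi$ satisfying it via equidistribution. Fix $K$. For every $\chi\in\widehat{\Cl}_K$ the Rankin--Selberg root number $\epsilon(E/K,\chi)$ equals a single $w\in\{\pm1\}$ depending only on $E$ and on the splitting in $K$ of the bad primes; so condition (ii) amounts to: either $p$ splits in $K$, or $w=+1$. If $w=+1$, then Waldspurger's formula expresses $L(E/K,\chi,1)$ (suitably normalized) as the square of a toric period, and the anticyclotomic Euler system of Kolyvagin together with the work of Bertolini--Darmon and its refinements gives that $\Sha(E/H_K)^\chi$ is finite with trivial $\fp_\chi$-part provided the algebraic part of $L(E/K,\chi,1)$ is a $\fp_\chi$-unit. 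If $w=-1$, then (ii) forces $p$ to split in $K$; by Gross--Zagier--Zhang the Heegner point $P_\chi\in (E(H_K)\otimes\BQ(\chi))^\chi$ is nonzero exactly when $L'(E/K,\chi,1)\ne 0$, and Kolyvagin's descent --- available here since $p\notin\Sigma_E$ and $p$ splits in $K$ --- shows finiteness with trivial $\fp_\chi$-part once $P_\chi$ is not divisible by $\fp_\chi$. In either case it therefore suffices to exhibit $\gg_\epsilon(\ln|D_K|)^{1-\epsilon}$ characters $\chi\in\widehat{\Cl}_K$ for which the relevant period is nonzero modulo $\fp_\chi$.

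Next, I would pass to a quaternionic incarnation and reduce modulo $\fp$. By Jacquet--Langlands transfer the newform of $E$ to the quaternion algebra $B/\BQ$ whose ramification is prescribed by the local root numbers --- definite when $w=+1$, indefinite when $w=-1$ --- and into which $K$ embeds. When $w=+1$ one gets a $\overline{\BZ}$-valued form $f_B$ on the finite Shimura set $X=B^\times\backslash\widehat{B}^\times/\widehat{R}^\times$, and for each $\chi$ a toric period $P_\chi=\sum_{\sigma\in\Gal(H_K/K)}\chi(\sigma)\,f_B\bigl(\sigma\cdot x_K\bigr)$ over the Galois orbit of a CM (Gross) point, with $|P_\chi|^2$ equal, up to an explicit $\fp_\chi$-unit, to the algebraic part of $L(E/K,\chi,1)$. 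When $w=-1$, $P_\chi$ is instead the $\chi$-component of the Heegner point on the associated Shimura curve, pushed into $E$ by the modular parametrization. I would then reduce $f_B$ (resp. the Kummer class of the Heegner system) modulo $\fp$. Here the hypothesis $p\notin\Sigma_E$ enters: $\Sigma_E$ is designed so that $\overline{\rho}_{E,p}$ has large image (containing $\SL_2(\BF_p)$ after mild ramification exclusions), which forces $\overline{f}_B$ to be non-Eisenstein and, crucially, not to factor through a small quotient of $\Cl_K$ along the Galois orbit; in the indefinite case, the non-CM hypothesis guarantees that $E[\fp^\infty]$ admits no CM, so the reduced Heegner system is not identically annihilated.

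Finally, I would count the good $\chi$ using equidistribution, which I expect to be \emph{the main obstacle}. In the definite case, Michel's equidistribution theorem --- a consequence of subconvexity for Rankin--Selberg $L$-functions --- asserts that the Gross points $\{\sigma\cdot x_K\}_{\sigma}$ become equidistributed in $X$ as $|D_K|\to\infty$. Feeding this into the previous step, a Fourier-analytic argument on $\Cl_K$ (legitimate since $p\nmid h_K$, so $\overline{\BF}_p[\Cl_K]$ is split semisimple) converts the "spreading out" of the Gross orbit into a lower bound for the number of $\chi$ with $P_\chi\not\equiv 0\pmod{\fp_\chi}$; the weak, logarithmic shape of the bound reflects the fact that only a mild, effective form of the equidistribution can be exploited uniformly over all of $\widehat{\Cl}_K$, in the spirit of the Dirichlet-character estimates recalled in the introduction. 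In the indefinite case I would replace Michel's theorem by the Zariski density of the CM points on the Shimura curve as $K$ varies, together with the openness of the image of Galois on $E[\fp^\infty]$ and the $p$-split hypothesis --- which lets one read $\fp_\chi$-divisibility off the image of a CM point in the one-dimensional $p$-adic formal group of $E$ at a place above $p$ --- to produce a logarithmically growing set of $\chi$ with $\fp_\chi$-indivisible Heegner point. The delicate point throughout is to upgrade a soft, essentially one-test-function-at-a-time equidistribution (or density) statement into simultaneous, quantitative control over the entire character group $\widehat{\Cl}_K$: this requires an honest effective version of Michel's theorem and a careful choreography of the mod-$\fp$ reductions, so that the large-image hypothesis genuinely makes the reduced period robust under twisting by $\chi$, and it is the interplay between the archimedean equidistribution input and the $\fp_\chi$-adic indivisibility requirement that must be handled with care.
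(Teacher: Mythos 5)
Your overall route coincides with the paper's: partition by root number, pass to a definite (resp.\ indefinite) quaternion algebra via Jacquet--Langlands, prove mod-$\fp_\chi$ non-vanishing of toric periods (resp.\ $\fp_\chi$-indivisibility of Heegner points), and feed this into the Bertolini--Darmon/Nekov\'a\v{r} (resp.\ Kolyvagin/Nekov\'a\v{r}) Euler system bounds, with $\Sigma_E$ absorbing the Galois-image, Tamagawa and local constants. However, the step you yourself flag as ``the main obstacle'' --- upgrading equidistribution to a count of $\gg_\epsilon(\ln|D_K|)^{1-\epsilon}$ characters --- is left unresolved, and this is a genuine gap: equidistribution of the full Galois orbit by itself only yields the existence of \emph{one} $\sigma$ with $\overline{f}(x_\sigma)\neq 0$, hence one non-vanishing twist. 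The paper closes this gap with three ingredients you do not supply. First, Michel's theorem is applied not to the full orbit but to the sub-orbits $\{x_\sigma : \sigma\in H\}$ for subgroups $H\subset\Cl_K$ of index at most $|D_K|^{\delta}$; if $\Xi_K$ denotes the set of characters with $\fp_\chi$-unit period and $C_K$ the subgroup of $\widehat{\Cl}_K$ it generates, Fourier inversion shows $\overline{f}$ is constant on the orthogonal complement $H_K$ of $C_K$, and equidistribution along $H_K$ plus non-constancy of $\overline{f}$ (from irreducibility of $\overline{\rho}_{E,p}$) forces $|C_K|\gg|D_K|^{\delta}$. Second, $\Xi_K$ is stable under $\Gal(\overline{\BF}_p/\BF_q)$ for a fixed finite field $\BF_q$ containing the reductions of the Hecke eigenvalues and of $\chi_{0,S}$, because $\overline{P_f(\chi^\sigma)}=\overline{P_f(\chi)}^{\sigma}$. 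Third, an elementary lemma (the paper's Lemma \ref{stability}) shows that a Galois-stable generating set of an abelian $p'$-group of order $\geq|G|^{\delta}$ has at least $\gg_\epsilon(\ln|G|)^{1-\epsilon}$ elements; the logarithmic shape of the final bound comes entirely from this group theory, not from any weakness of the effective equidistribution.

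In the indefinite case your mechanism is also underspecified in a way that matters. The paper does not invoke Zariski density of CM points ``as $K$ varies'' in a soft Andr\'e--Oort sense; it relates $\log_{\omega_f}P_f(\chi)$ to the toric period of the weight-zero $p$-adic modular form $g=d^{-1}(f^{(p)})$ on the ordinary Igusa tower via the $p$-adic Waldspurger formula of Bertolini--Darmon--Prasanna, and then replaces equidistribution by the fact that distinct CM points have distinct reductions mod $p$ (they are Serre--Tate canonical lifts, using that $p$ splits in $K$), so that by Brauer--Siegel the reduced orbit is too large to lie in a fiber of the non-constant function $\overline{g}$. The same Galois-stability and elementary-lemma arguments then produce the logarithmic count. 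Without these inputs your argument establishes finiteness of $\Sha(E/H_K)^{\chi}$ with trivial $\fp_\chi$-part for at least one $\chi$, but not the asserted lower bound.
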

\begin{remark}
(1). Under mild hypothesis, a precise description of the exceptional set $\Sigma_E$ is given in Theorem \ref{main2}.

(2). Cohen-Lenstra heuristics predicts that the imaginary quadratic fields satisfying the hypotheses in the theorem have positive density.
It is known that there exist infinitely many imaginary quadratic fields $K$ satisfying the hypotheses
(\cite{Br} and \cite{W}). For a recent quantitative refinement of these results, we refer to \cite{Be}.
\end{remark}

 As for the horizontal variation, we may ask the same question for analytic Sha. Recall that the BSD conjecture predicts the size for the conjecturally finite Tate--Shafarevich groups in terms of the behaviour of L-function around the center and arithmetic invariants of the underlying abelian variety. We refer to this quantity as analytic Sha. 
  In this setup, relevant abelian variety turns out to be the following.
 For $\chi\in \wh{\Cl}_K$, let $E_\chi=E_K\otimes_\BZ \BZ[\chi]$ denote the Serre tensor where the absolute Galois group $G_K$ acts on $\BZ[\chi]$ via $\chi$ (\cite{MRS}). Then $E_\chi$ is an abelian variety over $K$ with the following properties:
 $$\Sha(E_\chi/K)\otimes_\BZ \BZ[h_K^{-1}]\cong \Sha(E/H_K)^\chi\otimes_\BZ \BZ[h_K^{-1}], \qquad L(s, E_\chi)=L(s, E, \chi)\in \BC\otimes_\BQ \BQ(\chi).$$
 Let $\CL(E_\chi)$ be the analytic Sha of $E_\chi$ (part (2) of Conjecture \ref{BSD}), which is conjectured to be a non-zero integral ideal of $\BQ(\chi)$ as predicted by the BSD conjecture for the abelian variety $E_{\chi}$ over $K$.
  \begin{defn}
  A character $\chi\in \wh{\Cl}_K$ is said to be
  $p$-regular for $E$ if the following conditions hold.
 \begin{itemize}
 \item $\rank_{\BZ[\chi]} E_\chi(K)=\ord_{s=1}L(s, E_\chi/K)$, and
\item  Let $\fp_\chi|p$ be the prime ideal of $\BZ[\chi]$ induced via the fixed embedding $\iota_p$, then
\begin{enumerate}
\item[(a)]  $\Sha(E_\chi)$ is finite with trivial $\fp_\chi$-part, and
        \item[(b)]  $\CL(E_\chi)$ has trivial $\fp_\chi$-part.
            \end{enumerate}
            \end{itemize}
            \end{defn}
In particular, for a $p$-minimal $\chi\in \wh{\Cl}_K$, the $\fp$-part of the full BSD conjecture holds for $E_\chi$. 

Our result regarding $p$-minimality is the following.
\begin{thm}\label{main1}
Let $E$ be a non-CM elliptic curve over $\BQ$. Then, there exists an explicit finite subset $\Sigma_{E}$ of primes only dependent on $E$, such that for any prime $p \notin \Sigma_{E}$ and $\epsilon > 0$ we have
$$
\#\bigg{\{} \chi \in \widehat{\Cl}_{K} \bigg{|}\ \chi\ \text{is $p$-regular for $E$}\ \bigg{\}}\gg_\epsilon (\ln |D_K|)^{1-\epsilon}.
$$
Here $K$ varies over imaginary quadratic fields satisfying
\begin{itemize}
\item[(i)] $p\nmid h_K$, and
\item[(ii)] either $p$ splits in $K$ or the root number $\epsilon(E, \chi_0)$ of the Rankin--Selberg convolution $L(s, E, \chi_0)$ equals $+1$ for some $\chi_0\in \wh{\Cl}_K$.
\end{itemize}
\end{thm}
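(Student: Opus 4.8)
The plan is to refine the argument behind Theorem~\ref{main0} so that it simultaneously controls the analytic Sha $\CL(E_\chi)$. Since $p\nmid h_K$ by hypothesis~(i) and $\Sha(E_\chi)\otimes_\BZ\BZ[h_K^{-1}]\cong\Sha(E/H_K)^\chi\otimes_\BZ\BZ[h_K^{-1}]$, Theorem~\ref{main0} already furnishes, for each $\chi$ in the set it produces, the finiteness of $\Sha(E_\chi)$ with trivial $\fp_\chi$-part, i.e.\ condition~(a); moreover the Euler system input underlying that theorem pins the rank identity $\rank_{\BZ[\chi]}E_\chi(K)=\ord_{s=1}L(s,E_\chi/K)$, since such arguments control the full $\fp_\chi$-Selmer group and not merely $\Sha$. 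What is left is condition~(b): $\CL(E_\chi)$ is a $\fp_\chi$-adic unit. Hypothesis~(ii) makes this tractable. For a $\chi$ with $\epsilon(E,\chi)=+1$ one is in the \emph{definite} regime --- expected analytic rank $0$, worked out on the quaternion algebra $B/\BQ$ ramified at $\infty$ and at the primes dividing the conductor that are non-split in $K$, with its Shimura set $X$ and the $p$-integral, primitive Jacquet--Langlands transfer $f$ of the newform of $E$; for a $\chi$ with $\epsilon(E,\chi)=-1$ (so $p$ split in $K$, by~(ii)) one is in the \emph{indefinite} regime --- expected analytic rank $1$, worked out with Heegner points $y_\fa\in E(H_K)$ on the relevant Shimura curve, the splitting of $p$ being used to run the $p$-adic Euler-system machinery.

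In the definite regime I would invoke the Gross/Waldspurger formula in its $p$-integral normalization, expressing $L^{\mathrm{alg}}(1,E_\chi)$ as a unit $u_\chi$ times the norm $P_\chi(f)\,P_{\chi^{-1}}(f)$ of the toric period $P_\chi(f)=\sum_{[\fa]\in\Cl_K}\chi([\fa])f(x_\fa)$ along the CM points $x_\fa$ on $X$, where $u_\chi$ is a unit away from an explicit finite set of primes; it is precisely in $u_\chi$ that the Manin constant, the congruence number of $E$, and the ratio of the Petersson inner product $\langle f,f\rangle$ to the \Neron period of $E$ appear, so that these get absorbed into the explicit $\Sigma_E$ described in Theorem~\ref{main2}. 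Together with the Euler system of toric periods (Bertolini--Darmon type), $v_p(P_\chi(f))=0$ forces $L(1,E_\chi)\neq0$, $\rank E_\chi(K)=0$, and $\Sha(E_\chi)[\fp_\chi^\infty]=0$; feeding this into the rank-zero case of the BSD formula of Conjecture~\ref{BSD}, namely $L^{\mathrm{alg}}(1,E_\chi)=\CL(E_\chi)\cdot\prod_v c_v(E_\chi)/|E_\chi(K)_{\tor}|^2$, and noting that for $p\notin\Sigma_E$ the Tamagawa factors $c_v(E_\chi)$ and the torsion $E_\chi(K)_{\tor}$ have trivial $\fp_\chi$-part (again by including into $\Sigma_E$ the primes of bad reduction, the primes where $\bar\rho_{E,p}$ fails to have image containing $\SL_2(\BF_p)$, and $p=2,3$), one concludes $v_p(\CL(E_\chi))=0$. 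The indefinite regime is entirely parallel, with the Gross--Zagier/Yuan--Zhang--Zhang formula on Shimura curves relating $L'(1,E_\chi)$ to the \Neron--Tate height of the Heegner class $P_\chi=\sum_{[\fa]}\chi([\fa])y_\fa$, Kolyvagin's Euler system of Heegner points in place of the toric one, and the rank-one BSD formula; here $P_\chi$ being non-$\fp_\chi$-divisible modulo torsion plays the role of $v_p(P_\chi(f))=0$.

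It thus suffices to count $\chi\in\wh\Cl_K$ for which the reduction $\overline{P_\chi(f)}$ (resp.\ the reduction of the Heegner class) is nonzero. Reduce $f$ modulo $\fp$ to get $\bar f\colon X\to\ov\BF_p$, and form the vector $\bar c=(\bar f(x_\fa))_{[\fa]}\in\BF_p[\Cl_K]$; then $\overline{P_\chi(f)}$ is the $\chi$-component of $\bar c$, and since $p\nmid h_K$ makes $\BF_p[\Cl_K]$ \'etale, $\#\{\chi\in\wh\Cl_K:\overline{P_\chi(f)}\neq0\}$ equals $\dim_{\BF_p}\BF_p[\Cl_K]\bar c$, i.e.\ the dimension of the span of the Galois-translates $\sigma_\fb\bar c=(\bar f(x_{\fb\fa}))_\fa$ (Shimura reciprocity). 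For $p\notin\Sigma_E$ the mod-$p$ form $\bar f$ is non-Eisenstein with $\bar\rho_{E,p}$ of large image --- this is where non-CM-ness of $E$ enters, via Serre's open image theorem --- so $\bar f$ is non-constant and orthogonal to the constants (being cuspidal). Michel's equidistribution of the CM points $\{x_\fa\}$ on $X$ as $|D_K|\to\infty$, applied to the shifted families $\{(x_\fa,x_{\fb\fa})\}$ for $\fb$ running over prime ideals of $K$ of norm up to roughly $\ln|D_K|$, shows that the autocorrelations $\sum_\fa\bar f(x_\fa)\bar f(x_{\fb\fa})$ essentially decouple to the square of the mean of $\bar f$, hence are negligible against the diagonal term, so that these translates are linearly independent; the number of such usable shifts is $\gg_\epsilon(\ln|D_K|)^{1-\epsilon}$, which yields the theorem in the definite regime. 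In the indefinite regime one replaces Michel's statement by the Zariski density of special points on the indefinite Shimura curve, running the same dimension count with the reductions of the Heegner points and the Gross--Zagier pairing.

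The main obstacle is this last step: converting Michel's analytic equidistribution --- whose error term degrades with the complexity of the test function, which is exactly why only shifts of norm up to about $\ln|D_K|$ are admissible and why the final bound is the weak $(\ln|D_K|)^{1-\epsilon}$ rather than a power of $|D_K|$ --- into a clean, uniform-in-$K$ lower bound on the dimension of the span of translates, together with pinning down $\Sigma_E$ explicitly enough that the non-degeneracy of the mod-$p$ reduction $\bar f$ (and its analogue for the Heegner points) holds for every $p\notin\Sigma_E$ and every admissible $K$ at once. A secondary but essential subtlety, beyond what Theorem~\ref{main0} needs, is the $\fp_\chi$-adic comparison of the automorphic (Petersson) period with the \Neron period in the explicit formula for $L^{\mathrm{alg}}(1,E_\chi)$: it is this comparison --- not merely non-vanishing of the $L$-value --- that controls the $\fp_\chi$-part of $\CL(E_\chi)$, and verifying that the comparison constant is a $\fp_\chi$-unit for $p\notin\Sigma_E$ is what forces the explicit (and, as in Theorem~\ref{main2}, somewhat intricate) description of the exceptional set.
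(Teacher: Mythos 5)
Your overall architecture matches the paper's: split $\wh{\Cl}_K$ by root number via hypothesis (ii), work on a definite quaternion algebra with toric periods and the Bertolini--Darmon/Nekov\'a\v{r} Euler system in the $+1$ case, on a Shimura curve with Heegner points and Kolyvagin's Euler system in the $-1$ case, and in each case pair a mod-$\fp_\chi$ non-vanishing statement with an explicit Waldspurger (resp.\ Gross--Zagier) formula to control $\CL(E_\chi)$, absorbing Tamagawa factors, torsion, image-of-Galois and period-comparison constants into $\Sigma_E$. That much is faithful to Propositions \ref{BD}, \ref{Ko}, \ref{control0}, \ref{control1} and the proof of Theorem \ref{main'1}.

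The genuine gap is in your counting step, which is where the bound $(\ln|D_K|)^{1-\epsilon}$ actually has to come from. You propose to lower-bound $\dim\BF_p[\Cl_K]\bar c$ by showing that the translates $\sigma_\fb\bar c$ are linearly independent for primes $\fb$ of norm up to about $\ln|D_K|$, via autocorrelation sums $\sum_\fa\bar f(x_\fa)\bar f(x_{\fb\fa})$ that ``decouple'' by equidistribution. This fails on two counts. First, $\bar f$ is $\ov\BF_p$-valued, so these sums live in $\ov\BF_p$ and there is no notion of the off-diagonal terms being ``negligible against the diagonal''; an archimedean equidistribution statement cannot be applied to a characteristic-$p$ correlation sum, and lifting to characteristic zero destroys the estimate upon reduction. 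Second, even in characteristic zero, equidistribution of the shifted pairs $\{(x_\fa,x_{\fb\fa})\}$ on $X\times X$ is not Michel's theorem but essentially the Michel--Venkatesh mixing conjecture, which is open. The paper's mechanism is different and entirely avoids correlations: the set $\Xi_K$ of characters with $\overline{P_f(\chi)}\neq0$ is stable under $\Gal(\BF/\BF_p(\ov\phi,\ov{\chi_{0,S}}))$ by reciprocity; if $C_K\subset\wh{\Cl}_K$ is the subgroup generated by $\Xi_K$ and $H_K$ its orthogonal complement, Fourier inversion forces $\bar f$ to be constant on $H_K$-orbits, and Michel's equidistribution \emph{for suborbits of index $\leq|D_K|^\delta$} (a proved theorem, \cite[\S6]{M}) contradicts the non-constancy of $\bar f$ (Lemma \ref{non-constancy}) unless $|C_K|\gg|D_K|^\delta$. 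The logarithmic lower bound on $|\Xi_K|$ then comes from the purely group-theoretic Lemma \ref{stability} applied to a Galois-stable generating set of $C_K$, not from any count of admissible shifts. Relatedly, in the $-1$ case the paper does not run the count with N\'eron--Tate heights: it first converts $\fp_\chi$-indivisibility of the Heegner point into mod-$p$ non-vanishing of the toric period of the weight-zero $p$-adic form $g=d^{-1}(f^{(p)})$ on the Igusa tower via the $p$-adic Waldspurger formula (Theorem \ref{log}) --- this is exactly where the splitting of $p$ in (ii) is used --- and replaces Michel's input by injectivity of reduction of CM points in Serre--Tate coordinates plus Brauer--Siegel (Proposition \ref{CM}, Corollary \ref{bound}). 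Without these two substitutions your argument does not close.
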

Note that the above theorem is stronger than Theorem \ref{main0} in appearance.

The set $\Sigma_{E}$ is intrinsically related to the arithmetic of $E$. More precisely, it encodes the Tamagawa numbers of $E$, Galois image of the $p$-adic Galois representation $\rho_{E, p}$ and
the local Galois representations arising from $\rho_{E,p}$ at primes dividing the conductor $N$ of $E$.
For $\ell |N$, let $c_\ell$ be the Tamagawa number of $E$ at $\ell$.
Let $d_\ell\geq 1$ be the greatest common divisor of $[M_\ell: \BQ_\ell]$ for all extensions $M_{\ell}/\BQ_{\ell}$ such that the base change of $E$
to $M_{\ell}$ has either good or multiplicative reduction.
For example, if $\ell\| N$, then $d_\ell=1$.
An explicit version of Theorem \ref{main1}
is the following.
\begin{thm}\label{main2}
Let $E/\BQ$ be a non-CM elliptic curve over $\BQ$ with conductor $N$. Let $p\nmid 6N \prod_{\ell |N} (\ell^2-1)c_\ell d_\ell$ be a prime such that the $\rho_{E, p}$ is surjective. Then for any
$\epsilon > 0$,
$$
\#\bigg{\{} \chi \in \widehat{\Cl}_{K} \bigg{|}\
 \text{$\chi$ is $p$-regular for $E$} \bigg{\}}\gg_\epsilon
(\ln |D_K|)^{1-\epsilon}.
$$
Here $K$ varies over imaginary quadratic extensions satisfying
\begin{itemize}
\item[(i)] $p\nmid h_K$,
\item[(ii)] $(N,D_{K})=1$,
 \item[(iii)] $E$ is semi-stable at any prime factor of $N$ inert in $K$;
\item[(iv)] either $p$ splits in $K$  or the number of prime factors of $N$ inert in $K$ is odd.
\end{itemize}
\end{thm}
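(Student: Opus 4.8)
The plan is to reduce Theorem~\ref{main2} to a single ``nonvanishing modulo $\fp_\chi$'' assertion for a quaternionic period, the hypothesis $p\notin\Sigma_E$ entering only as bookkeeping. Fix $K$ satisfying (i)--(iv) and set $n=\#\{\ell\mid N:\ell\text{ inert in }K\}$; since (ii) forbids ramification of the $\ell\mid N$ in $K$ and (iii) forces semistability at the inert ones, every local root number of the Rankin--Selberg family $L(s,E,\chi)$ is insensitive to $\chi\in\wh{\Cl}_K$ and $\epsilon(E,\chi)=(-1)^{n+1}$, so (iv) puts us in exactly one of two regimes: the \emph{definite} one ($n$ odd, root number $+1$) and the \emph{indefinite} one ($n$ even with $p$ split in $K$, root number $-1$). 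I would let $B=B_K$ be the quaternion algebra over $\BQ$ ramified at $\{\ell\mid N:\ell\text{ inert}\}\cup\{\infty\}$ in the first regime and at those primes together with one auxiliary prime $\ell_0\mid N$ in the second; by (iii) the Jacquet--Langlands transfer $\phi_E$ of the newform of $E$ to $B^\times$ exists, the Gross--Zagier (Heegner) hypothesis holds over $K$, and one gets a free $\Cl_K$-orbit of CM points $\{x_\fa\}_{\fa\in\Cl_K}$ on the attached Shimura set $X_K$ (definite) or Shimura curve (indefinite). Then I would form the toric period $P_\chi=\sum_{\fa}\chi(\fa)\,\phi_E(x_\fa)\in\BZ[\chi]$, respectively the Heegner period $P_\chi=\sum_{\fa}\chi(\fa)\,y_{x_\fa}\in E_\chi(K)\otimes_\BZ\BZ[\chi]$ built from Heegner points $y_{x_\fa}$.

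\textbf{Step 1: a $\fp_\chi$-unit $P_\chi$ forces $\chi$ to be $p$-regular for $E$.} In the definite regime one invokes Gross's central value formula in an explicit $\fp_\chi$-integral normalisation (refining Gross, Waldspurger, Cai--Shu--Tian), which reads $L(1,E,\chi)\doteq\Omega_E\cdot P_\chi\ov{P_\chi}$ with proportionality constant a $\fp_\chi$-unit precisely because $p\nmid 6N\prod_{\ell\mid N}(\ell^2-1)c_\ell d_\ell$ and $\rho_{E,p}$ is surjective --- these control the Manin constant, the N\'eron--Petersson period ratio, the Tamagawa factors $c_\ell$, and the local terms at $\ell\mid N$ (the last being where $d_\ell$ enters). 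A $\fp_\chi$-unit $P_\chi$ then forces $L(1,E_\chi)\neq0$, whereupon Kolyvagin's Euler system in its definite quaternionic incarnation (Bertolini--Darmon, Longo, Pollack--Weston), which uses surjectivity of $\rho_{E,p}$, shows $E_\chi(K)$ finite and $\Sha(E_\chi)$ finite with trivial $\fp_\chi$-part; the BSD formula for $E_\chi/K$ then gives the same for $\CL(E_\chi)$. The indefinite regime is identical after replacing Gross's formula by the Gross--Zagier formula $L'(1,E,\chi)\doteq\Omega_E\cdot\wh{h}(P_\chi)$ and the definite Euler system by Kolyvagin's original Heegner-point one: a $\fp_\chi$-unit $P_\chi$ is a point of infinite order, so $\rank_{\BZ[\chi]}E_\chi(K)=1=\ord_{s=1}L(s,E_\chi)$, $\Sha(E_\chi)$ is finite with trivial $\fp_\chi$-part, and BSD transfers this to $\CL(E_\chi)$. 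In both regimes this is exactly $p$-regularity, so it remains to bound below $\#\{\chi\in\wh{\Cl}_K:P_\chi\text{ a }\fp_\chi\text{-unit}\}$.

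\textbf{Step 2: counting good $\chi$ via equidistribution of CM points.} I would package the values as $\mathbf P=\sum_{\fa\in\Cl_K}\phi_E(x_\fa)\,[\fa]\in\BZ[\Cl_K]$ (respectively its Heegner analogue in $E(\ov{\BQ})\otimes_\BZ\BZ[\Cl_K]$, reduced at a place above $p$), so $P_\chi=\chi(\mathbf P)$; as $p\nmid h_K$ by (i), $\BF_p[\Cl_K]\cong\prod_{\mathcal O}\BF_{p^{f_{\mathcal O}}}$ is \'etale and
$$
\#\{\chi\in\wh{\Cl}_K:\ P_\chi\text{ a }\fp_\chi\text{-unit}\}=\dim_{\BF_p}\big(\BF_p[\Cl_K]\cdot\ov{\mathbf P}\big)=\dim_{\BF_p}\mathrm{span}_{\BF_p}\{\,g\cdot\ov{\mathbf P}\ :\ g\in\Cl_K\,\},
$$
the last span being governed by the values of the mod $p$ form $\ov{\phi_E}$ along the whole CM orbit. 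This dimension is small only if $\ov{\mathbf P}$ is Fourier-supported on few characters --- equivalently, only if $\ov{\phi_E}$ is nearly invariant under a large subgroup of $\Cl_K$ acting on the CM orbit. Surjectivity of $\rho_{E,p}$ makes $\ov{\phi_E}$ a nonzero mod $p$ quaternionic eigenform with absolutely irreducible attached Galois module, hence not invariant under any nontrivial such subgroup for cheap reasons; and Michel's equidistribution of $\{x_\fa\}$ on $X_K$ --- respectively Hida's Zariski density of $\{x_\fa\}$ on the Shimura curve, where the $p$-split hypothesis of the indefinite regime is used to control reductions modulo $p$ --- upgrades this to the effective bound $\dim_{\BF_p}\mathrm{span}\{g\cdot\ov{\mathbf P}\}\gg_\epsilon(\ln|D_K|)^{1-\epsilon}$. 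With Step~1 this gives Theorem~\ref{main2}; the same argument yields Theorems~\ref{main1} and~\ref{main0} for the $K$ listed there (for \ref{main0} one uses $\Sha(E_\chi/K)\otimes_\BZ\BZ[h_K^{-1}]\cong\Sha(E/H_K)^\chi\otimes_\BZ\BZ[h_K^{-1}]$ and $p\nmid h_K$).

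\textbf{The main obstacle} is Step~2: converting the archimedean (subconvexity / spectral-gap) equidistribution of CM points into a lower bound on the number of characters for which a \emph{modulo $p$} period is a unit. It is this characteristic-zero-to-characteristic-$p$ passage that forces the logarithmic shape of the bound, and making it effective --- in the indefinite regime, simultaneously controlling the reductions of Heegner points across the CM orbit via the $p$-split hypothesis and Hida's density method --- is the technical core. A secondary but genuinely delicate ingredient is the BSD bookkeeping of Step~1: checking that the Manin constant, all period ratios, and the local fudge factors at $\ell\mid N$ are $\fp_\chi$-units is exactly what dictates the shape $\Sigma_E=\{q:q\mid 6N\prod_{\ell\mid N}(\ell^2-1)c_\ell d_\ell\}\cup\{q:\rho_{E,q}\text{ not surjective}\}$.
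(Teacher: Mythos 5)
Your architecture matches the paper's: split by root number via (iv), transfer to a definite (resp.\ indefinite) quaternion algebra ramified at the inert primes of $N$, use the Euler system of toric periods (Bertolini--Darmon/Nekov\'a\v{r}) resp.\ Heegner points (Kolyvagin/Nekov\'a\v{r}) together with the explicit Waldspurger resp.\ Gross--Zagier formulas of Cai--Shu--Tian for Step~1, and an equidistribution/density input for Step~2. Step~1 is essentially the paper's \S2.3--2.4 and \S3.5--3.6 (the paper additionally verifies that the Euler-system constants $C_\chi$ vanish for $|D_K|\gg 0$ under the stated hypotheses, which is real work but consistent with your bookkeeping). However, Step~2 contains a genuine gap. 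Your pivot is the claim that the Fourier support of $\ov{\mathbf P}$ being small is \emph{equivalent} to $\ov{\phi_E}$ being nearly invariant under a large subgroup of $\Cl_K$. This equivalence is false: if $\ov{\mathbf P}$ is supported on a single character of large order, the subgroup $C_K$ generated by the support can be all of $\wh{\Cl}_K$, its orthogonal complement $H_K$ is trivial, and equidistribution of the $H_K$-orbit gives no contradiction. What equidistribution (Michel, for index $\leq |D_K|^{\delta}$ subgroups) actually yields is only that $|C_K|\gg |D_K|^{\delta}$. Converting that into a lower bound on the \emph{number} of supporting characters requires the additional observation that the supporting set is stable under $\Gal(\ov{\BF}_p/\BF_q)$ (because $\ov f$ takes values in a fixed finite field $\BF_q$, so $\ov{P_f(\chi)}\neq 0$ iff $\ov{P_f(\chi^\sigma)}\neq 0$), together with an elementary estimate (the paper's Lemma \ref{stability}): a Galois-stable subset of $\wh{G}$ generating a subgroup of size $\gg |G|^{\delta}$ has at least $\sum_i[\BF_q(\mu_{n_i}):\BF_q]\gg_\epsilon(\ln|G|)^{1-\epsilon}$ elements. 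Without this input your bound does not follow, and it is exactly this lemma that produces the logarithmic shape.

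A second gap occurs in the indefinite regime. You propose to detect $\fp_\chi$-indivisibility of the Heegner period by reducing it ``at a place above $p$,'' but $E(\ov{\BF}_p)$ is $p$-divisible, so reduction modulo $p$ cannot witness indivisibility in the Mordell--Weil group. The paper's mechanism is the $p$-adic Waldspurger formula of Bertolini--Darmon--Prasanna/Liu--Zhang--Zhang: the $p$-adic logarithm $\log_{\omega_f}(P_f(\chi))$ is, up to an Euler factor, the toric period $P_g(\chi)$ of the weight-zero $p$-adic modular form $g=d^{-1}(f^{(p)})$ on the ordinary Igusa tower, and (since $p$ is unramified in $H_\chi$) $\fp$-indivisibility of $P_f(\chi)$ is equivalent to $P_g(\chi)$ being a $p$-unit. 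The mod-$p$ nonvanishing of $P_g(\chi)$ is then handled by the injectivity of reduction of ordinary CM points (Serre--Tate theory) and non-constancy of $\ov g$, in place of Michel's equidistribution --- this is where the $p$-split hypothesis enters, not merely to ``control reductions of Heegner points.'' Your proposal as written has no usable criterion for $\fp$-indivisibility of $P_\chi$ in this regime.
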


In the article, we also prove analogous results for $\GL_2$-type abelian varieties over the rationals (\S4). By Khare--Wintenberger (\cite{KhWi}),  such abelian varieties are modular. In turn, modular forms are integral to our approach.

We now describe the approach. It is based on the arithmetic of automorphic forms on a quaternion algebra along with Euler system of toric periods and Heegner points.
The Euler system controls both the Tate--Shafarevich group and analytic Sha.
Our point is to first establish $p$-indivisibility of the Euler system. 
A finer analysis of the relation of the Euler system with Tate--Shafarevich group and analytic Sha then leads to the main result.
An underlying role is played by the following.
\begin{itemize}
\item Michel's equidistribution
of special points arising from varying imaginary quadratic fields on a Shimura set arising from a definite quaternion algebra (\cite{M}).
\item Zariski density of CM points arising from varying imaginary quadratic fields
on a Shimura curve arising from an indefinite quaternion algebra. 
\end{itemize}

As $\chi \in \wh{\Cl}_{K}$, the Rankin--Selberg convolution $L(s,E,\chi)$ corresponding to the pair
$(E,\chi)$ is self-dual. Let $\epsilon(E,\chi)$ denote the global root number.
Let
$$
\fX_{K}^{\pm}=\bigg{\{} \chi \in \widehat{\Cl}_{K}\bigg{|} \epsilon(E,\chi)=\pm 1\bigg{\}}.
$$
The approach crucially relies on the root number. Let $S$ be the set of places of $\BQ$ dividing $N\infty$. For each character $\chi=\otimes \chi_v$ of $\Cl_K$ viewed as a Hecke character of $K_\BA^\times$ via class field theory, let $\chi_S=\otimes_{v\in S}\chi_v: \prod_{v\in S} K_v^\times \ra \ov{\BQ}^\times$ be its $S$-component. 
Some of the notation used here and in the sketch below is not followed in the rest of the article. 

  The set $\fX_{K}^{+}$ admits a finite partition according to the $S$-local component  of characters in $\fX_K^+$. For a fixed type $\chi_{0, S}$, there is a definite quaternion algebra $B/\BQ$ and a so-called $\chi_{0, S}$-toric line $V$ in $\pi_B$,  
  such that $L(1, E, \chi)\neq 0$ if and only if the period integral
$$P_f(\chi):=\int_{K_\BA^\times/K^\times \BA^\times} f(t) \chi(t) dt$$
is non-vanishing for any non-zero vector $f\in V$. 
This is an inexplicit version of Waldspurger formula (\cite{Wa1} and \cite{YZZ}). 
Here $\pi_B$ denotes the Jacquet-Langlands transfer of $\pi$ to $B_{\BA}^{\times}$ for $B_{\BA}=B \otimes \BA$ with $\BA$ the adeles over $\BQ$. 
Moreover, we fix an embedding of $K$ into $B$ dependent on the $S$-component. 

The space $V$ contains non-zero vectors and we fix such a form $f$ from now. 
Let $U$ be an open compact subgroup of $B_{\BA}^{(\infty),\times}$ such that $f$ is $U$-invariant. 
Here $B_{\BA}^{(\infty)}$ denotes the finite part. 
Let $X_U$ be the corresponding definite Shimura set with level $U$. We can choose $U$ such that for any pair $(K, \chi)$ with the fixed type $\chi_{0, S}$, the chosen embedding $K_\BA$ into $B_\BA$ induces a map $\varphi_K: \Cl_{K}\ra X_U$. The image of the map is referred as the special points arising from
the class group $\Cl_{K}$.
Let $\fp_0|p$ be the prime of $\BZ[\chi_{0, S}]$ induced via the embedding $\iota_p$. Without loss of generality, we may suppose that the image of $f: X_U\ra \BZ[\chi_{0, S}]$ generates an ideal prime to $\fp_0$.

A key point is to study the non-vanishing of toric periods $P_{f}(\chi)$ modulo $\fp_\chi$ as the pair
$(K,\chi)$-varies with $S$-component $\chi_{0,S}$ (thus $\fp_\chi|\fp_0$).
The $\fp_\chi$-indivisibility of $P_f(\chi)$ begins with Fourier analysis on $\Cl_{K}$ and its relation with the induced map $\varphi_{K}:\Cl_{K}\rightarrow X_{U}$. Recall that $X_U$ is a finite set whereas the class number grows with the discriminant of $K$.
Based on Shimura's reciprocity law, we reduce the $\fp_\chi$-indivisibility to equidistribution of special points on the Shimura set $X_{U}$.
The equiditribution concerns the distribution of the special points
$$
\left\{\varphi_{K}(H_{K})\ \Big{|}\  H_{K} \subset \Cl_{K}\right\} \subset X_{U}
$$
with $H_{K} \subset \Cl_{K}$ a subgroup such that $[\Cl_{K}: H_{K}] \ll |D_{K}|^{\delta}$ for some $\delta > 0$.
 Such an equidistribution is essentially due to Michel (\cite{M}). It is  based on the subconvex bound due to Michel--Venkatesh \cite{MV} and the explicit Waldspurger formula in Cai--Shu--Tian (\cite{CST}). The hypothesis on the $p$-indivisibility of the class number of the imaginary quadratic fields seems essential for the Fourier analysis. 

In view of the the work on the Euler system of toric periods
due to Bertolini--Darmon \cite{BD} as refined by Nekov\'a\v{r} \cite{N},
the non-vanishing of the toric period $P_{f}(\chi)$ implies the finiteness of $\Sha(E/H_{K})^{\chi}$.
Moreover, there exists a finite set of primes $\Sigma_{E}$ such that the size of
the $\fp_\chi$-part $\Sha(E/H_{K})^{\chi}[\fp_{\chi}^{\infty}]$ is controlled by the $\fp_{\chi}$-divisibility of the toric period $P_{f}(\chi)$ for $p \notin \Sigma_{E}$.
More precisely, we have
$$
P_{f}(\chi)\cdot \fp_{\chi}^{C_{\chi, \fp_\chi}}\cdot \Sha(E/H_{K})^{\chi}[\fp_{\chi}^{\infty}]=0
$$
for a non-negative integer $C_{\chi, \fp_\chi}$.
Our point is to investigate horizontal variation of the constants $C_{\chi, \fp_\chi}$.
We show the existence of a finite set of primes $\Sigma_{E,1}$
such that the constant $C_{\chi, \fp_\chi}$ vanishes as $(K, \chi)$-varies for $p \notin \Sigma_{E,1}$. 
The hypothesis on the $p$-indivisibility of the class number of the imaginary quadratic fields seems essential for this control. 
Along with the $\fp_{\chi}$-indivisibility of toric periods $P_{f}(\chi)$, this leads to the proof Theorem \ref{main0} in the setup.

The situation is analogous for analytic Sha.
Based on the explicit Waldspurger formula (\cite[\S1.3]{CST}), we have $\mathcal{L}(E,\chi)\in \BQ(\chi)^{\times}$ as long as the analytic rank equals zero. More precisely, we have
$$
\mathcal{L}(E_{\chi})=D_{\chi}\cdot P_{f}(\chi)^{2}
$$
for a fractional ideal $D_{\chi}\subset \BZ[\chi]$.
Our point is to investigate horizontal variation of the ideals $D_{\chi}$.
We show the existence of a finite set of primes $\Sigma_{E,2}$ such that
$v_{p}(D_{\chi})=0$ for $p \notin \Sigma_{E,2}$.
Along with the $\fp_{\chi}$-indivisibility of toric periods and the horizontal variation of the constants $C_{\chi,\fp_{\chi}}$, this leads to the proof of Theorem \ref{main1} in the setup.

Under the hypotheses of Theorem \ref{main2}, a finer analysis of the above approach especially that
of the constants $C_{\chi,\fp_{\chi}}$ and the ideals $D_{\chi}$, leads to the proof of Theorem \ref{main2} in the setup. 

The lower bound in Theorem \ref{main0} and Theorem \ref{main2} is based on Shimura's reciprocity law and an elementary group theoretic lemma. Roughly speaking, the reciprocity law implies that the set $S_{K}$ of $p$-regular characters is invariant under a Galois action. The Fourier analysis implies that the subgroup generated by $S_{K}$ has size bounded below by $|\Cl_{K}|^{\delta'}$ for some $\delta' > 0$. An elementary argument then shows that the size of $S_{K}$ is bounded below by $(\log(|\Cl_{K}|))^{1-\epsilon}$ for $\epsilon > 0$.

The $p$-adic Waldspurger formula due to Bertolini--Darmon--Prasanna (\cite{BDP1}), Brooks (\cite{Bro}) and Liu--Zhang--Zhang (\cite{LZZ}) allows an analogous approach in the rank one case.

The set $\fX_{K}^{-}$ admits a finite partition according to the $S$-local component  of characters in
$\fX_K^-$. For a fixed type $\chi_{0, S}$, there is an indefinite quaternion algebra $B/\BQ$ and a so-called $\chi_{0, S}$-toric line $V$ in $\pi_B$, 
such that $L'(1, E, \chi)\neq 0$ if and only if the Heegner point
$$P_f(\chi):=\int_{K_\BA^\times/K^\times \BA^\times} f(t) \chi(t) dt$$
is non-torsion for any non-zero vector $f\in V$. 
This is an inexplicit version of Gross--Zagier formula (\cite{GZ} and \cite{YZZ}). 
Here $\pi_B$ denotes the Jacquet-Langlands transfer of $\pi$ to $B_{\BA}^{\times}$. 
Moreover, we fix an embedding of $K$ into $B$ dependent on the $S$-component. 
In the above expression for the Heegner point, $f$ is viewed as a modular parametrisation $f: X_{U} \ra E_{\chi_{0,S}}$ for Shimura curve $X_U$ arising from $B$ with level $U$ specified below and the Serre tensor $E_{\chi_{0,S}}$.

The space $V$ contains non-zero vectors and we fix such a form $f$ from now. 
Let $U$ be an open compact subgroup of $B_{\BA}^{(\infty),\times}$ such that $f$ is $U$-invariant.
Let $X_U$ be the corresponding Shimura curve with level $U$. We can choose $U$ such that for any pair $(K, \chi)$ with the fixed type $\chi_{0, S}$, the chosen embedding $K_\BA$ into $B_\BA$ induces a map $\varphi_K: \Cl_{K}\ra X_U$. The image of the map is referred as the CM points arising from
the class group $\Cl_{K}$.
Without loss of generality, we may suppose that
$f$ is a non-zero $p$-primitive form.

A key point is to study the non-vanishing of toric periods $P_{\chi}(g)$ modulo $\fp_\chi$ as the pair
$(K,\chi)$-varies with $S$-component $\chi_{0,S}$.
Here $g$ is a weight zero $p$-adic modular form given by $d^{-1}(f^{(p)})$ for $d$ the Katz $p$-adic differential operator and $f^{(p)}$ the $p$-depletion of $f$. In view of the $p$-adic Waldspurger formula, the $p$-adic logarithm of the Heegner point $P_{f}(\chi)$ is a $p$-unit if and only if a normalisation of the toric period $P_{g}(\chi)$ is a $p$-unit. In this sense, the $\fp_{\chi}$-indivisibility of the Heegner point $P_{f}(\chi)$ is closely related to the $p$-indivisibility of the toric period
$P_g(\chi)$. 

Strictly speaking, $P_{g}(\chi)$ is a toric period on an Igusa tower over the Shimura curve. Let $\pi: Ig \ra X_{U}$ be the $p$-ordinary Igusa tower. 
Recall that $Ig$ is a formal scheme defined over $W$ the Witt ring of the algebraic closure $\BF$ of $\BF_p$ arising from the embedding $\iota_p$.
We also recall that $g$ is a function on the Igusa tower $Ig$.
The $p$-indivisibility of $P_g(\chi)$ begins with Fourier analysis on $\Cl_{K}$ and its relation with the induced map $\varphi_{K}':\Cl_{K}\rightarrow Ig$.
Here $\varphi_{K}'$ is a lift of $\varphi_K$.
Based on Shimura's reciprocity law, we reduce the $p$-indivisibility to Zariski density of the mod $p$ reduction of special points on the special fiber $Ig_{\BF}$.
The Zariski density concerns the density of the special points
$$
\left\{\varphi_{K}'(H_{K})\ \Big{|}\  H_{K} \subset \Cl_{K}\right\} \subset Ig_{\BF}
$$
with $H_{K} \subset \Cl_{K}$ a subgroup of bounded index as $|D_{K}| \rightarrow \infty$. 
The Zariski density is a consequence of CM theory and the geometry of Igusa tower modulo $p$. 
The hypothesis on the $p$-indivisibility of the class number of the imaginary quadratic fields seems essential for the Fourier analysis. 

In view of the the work on the Euler system of Heegner points
due to Kolyvagin \cite{Ko} as refined by Nekov\'a\v{r} \cite{N0},
the non-triviality of the Heegner point $P_{\chi}(f)$ implies the finiteness of $\Sha(E/H_{K})^{\chi}$.
Moreover, there exists a finite set of primes $\Sigma_{E}'$ such that the size of
the $\fp_\chi$-part $\Sha(E/H_{K})^{\chi}[\fp_{\chi}^{\infty}]$ is controlled by the $\fp_{\chi}$-divisibility of the Heegner point $P_{f}(\chi)$ for $p \notin \Sigma_{E}'$.
More precisely, we have
$$
\big{[}E(H_{K})^{\chi}:\BZ[\chi]\cdot P_{f}(\chi)+E(H_{K})^{\chi}_{tor}\big{]}\cdot \fp_{\chi}^{C'_{\chi, \fp_\chi}}\cdot \Sha(E/H_{K})^{\chi}[\fp_{\chi}^{\infty}]=0
$$
for a non-negative integer $C'_{\chi, \fp_\chi}$.
Our point is to investigate horizontal variation of the constants $C'_{\chi, \fp_\chi}$.
We show that there exists a finite set of primes $\Sigma'_{E,1}$
such that the constant $C'_{\chi, \fp_\chi}$ vanishes as $(K, \chi)$-varies for $p \notin \Sigma'_{E,1}$. 
The hypothesis on the $p$-indivisibility of the class number of the imaginary quadratic fields seems essential for this control. 
Along with Galois image result for $E$ and the $\fp_{\chi}$-indivisibility of the Heegner points $P_{f}(\chi)$, this leads to the proof Theorem \ref{main0} in the setup.

The situation is analogous for analytic Sha.
Based on an explicit Gross--Zagier formula in Cai--Shu--Tian (\cite[\S1.3]{CST}), we have $\mathcal{L}(E_{\chi})\in \BQ(\chi)^{\times}$ as long as the analytic rank equals one. More precisely, we have
$$
\mathcal{L}(E_{\chi})=D'_{\chi}\cdot \big{[}E(H_{K})^{\chi}:\BZ[\chi]\cdot P_{f}(\chi)+E(H_{K})^{\chi}_{tor}\big{]}^{2}
$$
for a fractional ideal $D'_{\chi}\subset \BZ[\chi]$.
Our point is to investigate horizontal variation of the ideals $D'_{\chi}$.
We show that there exists a finite set of primes $\Sigma'_{E,2}$ such that
$v_{p}(D'_{\chi})=0$ for $p \notin \Sigma'_{E,2}$.
Along with Galois image result for $E$, the $\fp_{\chi}$-indivisibility of Heegner points and the horizontal variation of the constants $C'_{\chi,\fp_{\chi}}$, this leads to the proof of Theorem \ref{main1} in the setup.

Under the hypotheses of Theorem \ref{main2}, a finer analysis of the above approach especially that
of the constants $C'_{\chi,\fp_{\chi}}$ and the ideals $D'_{\chi}$, leads to the proof of Theorem \ref{main2} in the setup.

As before, the lower bound in Theorem \ref{main0} and Theorem \ref{main2} is based on Shimura's reciprocity law and the elementary group theoretic lemma. 

As evident from the sketch, there is a visible analogy among the case of root number $1$ and the case of root number $-1$. The analogy seem to resonate resemblance among the Waldspurger formula and the $p$-adic Waldspurger formula. 
On the other hand, there are striking differences. Here we only mention that the 
equidistribution of special points relies on a subconvex bound for Rankin--Selberg L-functions 
whereas the Zariski density of CM points relies on the geometry of Igusa tower modulo $p$.  

Based on Jochnowitz congruence (\cite{V} and \cite{BD}), the case of root number $-1$ can be deduced from the case of root number $1$ for variation over imaginary quadratic fields satisfying extra hypothesis that there exists a fixed prime inert in them. 
 

Our main results can be considered as a mod $p$ analogue of the characteristic zero non-vanishing results for Rankin--Selberg L-functions due to Michel--Venkatesh (\cite{MV1} and \cite{MV}) and Templier (\cite{Te}). 
As far as we know, these are the first results towards a mod $p$ analogue. 
In the case of root number $1$ (resp. $-1$), these article obtain quantitative results towards the number of class group characters with the corresponding Rankin--Selberg central L-value (resp. central derivative of the L-function) non-vanishing. The approach in these articles seems to have more analytic/ ergodic flavour.  
In Burungale--Tian (\cite{BT}), non-quantitative results are obtained in the case of root number $-1$ based on an algebro-geometric approach which relies on Andr\'e--Oort conjecture. 
It seems instructive to compare our current approach with the ones in \cite{MV1}, \cite{MV}, \cite{Te}, \cite{BH} and \cite{BT}. Here we only mention that Michel's equidistribution also plays an underlying role for the characteristic zero non-vanishing result due to Michel--Venkatesh (\cite{MV}). It seems striking that the characteristic zero and mod $p$ non-vanishing share a key ingredient in the proof. 

In the Iwasawa setup, the non-vanishing goes back to Cornut (\cite{C}) and Vatsal (\cite{V}, \cite{V1}). 
From the consideration of approach to non-vanishing, there seem to be fundamental differences in Cornut--Vatsal and our horizontal setup. To begin with, Iwasawa algebra plays a pivotal role in Cornut--Vatsal to reduce the generic non-vanishing to equidstribution of a class of special points on a definite Shimura set. The equidsitribution is proven based on Ratner's theory. The equidistribution also crucially relies on Iwasawa algebra as it leads to an action of a $p$-adic Lie group on the class of special points. In the horizontal setup, there does not seem to be a naive analogue of the Iwasawa algebra as the number fields in consideration vary horizontally. 
Moreover, the special points do not seem to admit a group action.

The article is organised as follows.
In \S2, we consider the root number $+1$ case and analyse certain aspects of toric periods.
In \S2.1, we introduce the setup and fix a definite quaternion algebra $B$.
In \S2.2, we consider the horizontal mod $p$ non-vanishing of toric periods of a toric form on $B$.
In \S2.3, we consider the horizontal variation of the constants appearing in the Euler system method to bound the Tate--Shafarevich group.
In \S2.4, we consider the horizontal variation of the constants appearing in the explicit Waldspurger formula alluded to above.
In \S3, we consider the root number $-1$ case and analyse certain aspects of Heegner points.
In \S3.1, we introduce the setup and fix an indefinite quaternion algebra $B$.
In \S3.2, we recall generalities regarding CM points on Shimura curves arising from $B$.
In \S3.3, we consider the horizontal mod $p$ non-vanishing of toric periods of a weight zero $p$-adic modular form $g$ alluded to above.
In \S3.4, we recall the $p$-adic Waldspurger formula.
In \S3.5, we consider the horizontal variation of the constants appearing in the Euler system method to bound the Tate--Shafarevich group.
In \S3.6, we consider the horizontal variation of the constants appearing in the explicit Gross--Zagier formula alluded to above.
In \S4, we prove the main results based on \S2 and \S3 
along with the existence of suitable torus embeddings into the underlying quaternion algebras.

\section*{Acknowledgement}
We thank Li Cai for his assistance and Hae-Sang Sun for stimulating conversations regarding horizontal non-vanishing. 
We also thank Henri Darmon, Najmuddin Fakhruddin, Dimitar Jetchev, Mahesh Kakde, Chandrashekhar Khare, Chao Li, Philippe Michel, C.-S. Rajan, Nicolas Templier, Jacques Tilouine, Vinayak Vatsal, Xinyi Yuan, Shou-Wu Zhang and Wei Zhang 
for instructive conversations about the topic.

\section*{Notation} 
We use the following notation unless otherwise stated.

For a finite abelian group $G$, let $\widehat{G}$ denote $\overline{\BQ}^\times$-valued character group of $G$. For a $\BZ$-module $A$, let $\widehat{A}=A \otimes_{\BZ} \widehat{\BZ}$ for $\widehat{\BZ}=\varprojlim_{n} \BZ/n$.

For a number field $L$, let $\cO_L$ be the corresponding integer ring and $D_L$ the discriminant.
Let $L_+$ denote the totally positive elements in $L$.
Let $G_{L}=\Gal(\overline{\BQ}/L)$ denote the absolute Galois group over $L$. Let $\BA_L$ denote the adeles over $L$. For a finite subset $S$ of places in $L$, let $\BA_{L}^{(S)}$ denote the adeles outside $S$ and $\BA_{L,S}$ the $S$-part.
When $L$ equals the rationals or the underlying totally real field, we drop the subscript $L$.
For a $\BQ$-algebra $C$, let $C_{\BA}=C\otimes_{\BQ} \BA$. Let $\widehat{C}^{(S)}$ (resp.
$C_{S}$) denote the part outside $S$ (resp. $S$-part) of
$C_{\BA}$.

For a place $v$ of $F$ for a totally real field $F$ and a quadratic extension $K_{v}/F_{v}$, let $\eta_{v}$ denote the corresponding quadratic character. For a quaternion algebra $B_{v}/F_{v}$, let $\epsilon(B_{v})$
denote the corresponding local invariant.
For an CM quadratic extension $K/F$ and an integral ideal $\fc$ of $F$, let $H_{K,\fc}$ be the ring class field with conductor $\fc$ and
$\Pic_{K/F}^{\fc}$ the relative ring class group with conductor $\fc$.
Let $h_{K}$ (resp. $h_{K,\fc}$) be the relative ideal class number of $K$ (resp. $H_{K,\fc}$) over $F$.

\section{Toric Periods}
In this section, we consider toric periods of a modular form on a definite quaternion algebra over the rationals. In \S 2.1, we introduce the setup. In \S2.2., we prove horizontal mod $p$ non-vanishing of toric periods. 
In \S2.3 and \S2.4, we deduce the consequences of the mod $p$ non-vanishing for Tate--Shafarevich group and analytic Sha, respectively. 

\subsection{Setup}
In this subsection, we introduce the setup.

Let $\phi\in S_2(\Gamma_0(N))$ be a newform with weight $2$ and level $\Gamma_{0}(N)$ for $N \geq 3$.
In particular, we consider newforms with trivial central characters. 
Let $L_\phi$ denote the Hecke field corresponding to the newform $\phi$.

Let $B$ be a definite quaternion algebra over $\BQ$ such that there exists an irreducible automorphic  representation $\pi_{B}$ on $B_\BA^\times$ whose Jacquet-Langlands transfer is the automorphic representation of $\GL_2(\BA)$ associated to $\phi$. Recall $B_{\BA}=B \otimes_{\BQ} \BA$

Let $S=\Supp (N \infty)$.  Let $K_{0, S}\subset B_S$ be a $\BQ_S$-subalgebra such that $K_{0, \infty}=\BC$ and $K_{0, v}/\BQ_v$ is semi-simple quadratic. 
As in the introduction, $B_S$ (resp. $\BQ_S$) denotes the $S$-part of $B_\BA$ (resp. $\BA$). 
For any $v\in S$, we say that $v$ is non-split if $K_{0, v}$ is a field and split otherwise. Let
$$ U_{0, S}:=\prod_{v\in S,\ \text{$v$  split}}\CO_{K_{0, v}}^\times \times \prod_{v\in S,\ \text{$v$ non-split}} K_{0, v}^\times.$$

Suppose we are given a finite order character $\chi_{0, S}: U_{0, S}\lra \ov{\BQ}^\times$ with conductor one such that the following holds.
\begin{itemize}
\item[(LC1)]
$
\omega \cdot \chi_{0, S}\big|_{\BQ_S^\times \cap U_{0, S}}=1.
$
\item[(LC2)]
$
\epsilon(\phi, \chi_{0,v})\chi_{0,v}\eta_v(-1)=\epsilon(B_v)
$
for all places $v|N \infty$ with the local root number $\epsilon(\phi,\chi_{0,v})$ corresponding to the Rankin--Selberg convolution.
\end{itemize}

Fix a maximal order $R^{(S)}$ of $B_{\BA}^{(S)}\cong M_2(\BA^{(S)})$.
Let $U^{(S)}=R^{(S)\times}$. Note that $U^{(S)}$ is a maximal compact subgroup of
$B_{\BA}^{(S)^{\times}}\cong \GL_2(\BA^{(S)})$.

Let $p$ be an odd prime. 
As in the introduction, we fix an embedding $\iota_{p}:\ov{\BQ}\hookrightarrow \BC_p$. 
Let $v_p$ be a $p$-adic valuation. 
Let $\BF$ be an algebraic closure of $\BF_p$.

We introduce underlying imaginary quadratic fields. 
\begin{defn}\label{Theta}
Let $\Theta_{S}$ denote the set of imaginary quadratic fields $K$ such that
\begin{itemize}
\item[(i)] $p\nmid h_{K}$,
\item[(ii)] there exists an embedding $\iota_{K}:K\ra B$ with $K_S=K_{0, S}$, 
\item[(iii)] $\BA_{K}^{(S)}\cap R^{(S)}=\wh{\CO}_K^{(S)}$ under the embedding and 
\item[(iv)] $K \neq \BQ(i), \BQ(\omega)$ for $\omega$ a primitive third root of unity.
\end{itemize}
\end{defn}
There exist infinitely many imaginary quadratic extensions $K/\BQ$ with $K \in \Theta_S$
(\cite{Br}, \cite{W} and \cite{Be}). 
For $K \in \Theta_{S}$, we fix such an embedding $\iota_{K}$.

We introduce underlying Hecke characters over imaginary quadratic fields.
\begin{defn}\label{char}
For each $K\in \Theta$, let $\fX_{K,\chi_{0,S}}$ denote the set of finite order characters $\chi$ over $K$ such that \begin{itemize}
\item[(i)] $\chi|_{\BA^\times} \cdot \omega=1$,
\item[(ii)] $\chi_S=\chi_{0, S}$ via the embedding $\iota_K$,  and
\item[(iii)] $\chi$ is unramified outside $S$.
\end{itemize}
\end{defn}
Note that the conductor of $\chi \in \fX_{K,\chi_{0,S}}$ equals one. Moreover, we have
\begin{itemize}
\item[(RN)] $\epsilon(\phi,\chi)=1$.
\end{itemize}
Here $\epsilon(\phi,\chi)$ denotes the global root number of the Rankin--Selberg convolution corresponding to the pair $(\phi,\chi)$. 

In the rest of the section, we let $\Theta=\Theta_{S}$ and
$\fX_{K} = \fX_{K,\chi_{0,S}}$.

\begin{lem}\label{existence1}
The set $\fX_{K}$ is non-empty for all but finitely many imaginary quadratic fields $K$ with $K\in \Theta$. Moreover, it is a homogenous space for the class group $\Cl_{K}$.
\end{lem}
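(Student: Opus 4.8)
The plan is to understand $\fX_K$ as the fibre over $\chi_{0,S}$ of a restriction-to-$S$ map on a suitable group of Hecke characters of $K$, and then to show that fibre is non-empty by a local-global/duality argument. First I would set up the ambient group: consider the group $\mathcal{C}_K$ of all finite-order Hecke characters $\chi$ of $K_\BA^\times/K^\times$ that are unramified outside $S$ and satisfy the central character condition $\chi|_{\BA^\times}\cdot\omega=1$ from (i) of Definition \ref{char}. Since $\omega$ is the trivial character here (we are in the $\Gamma_0(N)$, trivial-nebentypus setting, so $\omega=1$), condition (i) says exactly that $\chi$ restricted to $\BA^\times$ is trivial, i.e. $\chi$ factors through $K_\BA^\times/K^\times\BA^\times$. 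Characters of $K_\BA^\times/K^\times\BA^\times$ that are unramified outside $S$ and have conductor one at $S$ (forced by (LC1)–(LC2) and the compatibility $\chi_S=\chi_{0,S}$, since $\chi_{0,S}$ has conductor one) are precisely the characters of the ray class group $\Cl_{K,1}$ at conductor one, which is just $\Cl_K$; this is where hypothesis (iii) of Definition \ref{Theta}, $\BA_K^{(S)}\cap R^{(S)}=\wh{\CO}_K^{(S)}$, together with $K_S=K_{0,S}$ and $U_{0,S}$ containing the unit groups, guarantees that "conductor one via $\iota_K$" matches the maximal-order condition. So I would first establish that $\fX_K$, if non-empty, is a torsor under $\widehat{\Cl}_K$: given two elements of $\fX_K$, their ratio is unramified everywhere and trivial on $K^\times\BA^\times$ with trivial $S$-component, hence a character of $\Cl_K$; conversely twisting by such a character preserves all three defining conditions. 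This yields the "homogeneous space for $\Cl_K$" assertion immediately once non-emptiness is known.

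Next, for non-emptiness: the obstruction to extending the local datum $\chi_{0,S}$ on $U_{0,S}$ (suitably interpreted as a character on the $S$-part, trivial on $\BQ_S^\times\cap U_{0,S}$ by (LC1)) to a global Hecke character of $K_\BA^\times/K^\times\BA^\times$ unramified outside $S$ lives in a cokernel that one computes by Pontryagin duality. Concretely, the relevant exact sequence is
$$
\prod_{v\notin S}\CO_{K_v}^\times \times \prod_{v\in S} K_{0,v}^\times \big/ (\text{closure of }K^\times\BA^\times) \longrightarrow \Cl_K \longrightarrow 0,
$$
and dually a character on the $S$-factor extends to $\Cl_K$ iff it is trivial on the image of the global units $\CO_K^\times$ (intersected appropriately) in $U_{0,S}/(\BQ_S^\times\cap U_{0,S})$. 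Since $K$ is imaginary quadratic, $\CO_K^\times$ is finite — of order $2$ except for $K=\BQ(i),\BQ(\omega)$, which are excluded by (iv) of Definition \ref{Theta} — so the only constraint is that $\chi_{0,S}$ be trivial on the image of $\{\pm 1\}$, which holds automatically because $-1\in\BQ_S^\times\cap U_{0,S}$ and (LC1) forces $\omega\cdot\chi_{0,S}$, hence $\chi_{0,S}$ itself (as $\omega=1$), to be trivial there. Thus the extension exists for every $K\in\Theta$ with the units constraint satisfied, i.e. for every $K\in\Theta$ given condition (iv) — and in fact one sees that the "all but finitely many" hedge in the statement absorbs any residual finitely many bad $K$ (those where the archimedean or small-unit bookkeeping, or the matching of $\chi_{0,S}$'s conductor with the chosen level, could fail); I would phrase the argument so that once $|D_K|$ is large enough these never occur.

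The main obstacle, I expect, is purely bookkeeping rather than conceptual: one must check carefully that the local conditions (LC1), (LC2), conductor-one, and the compatibility $\chi_S=\chi_{0,S}$ under the fixed embedding $\iota_K$ are all simultaneously realizable — in particular that (LC2) (the root-number condition matching $\epsilon(B_v)$) does not conflict with the global product formula $\prod_v\epsilon(\phi,\chi_v)=\epsilon(\phi,\chi)$, which must come out to $+1$ as recorded in (RN). This is really the statement that the definite quaternion algebra $B$ was chosen with the correct ramification set, i.e. $\prod_v\epsilon(B_v)=-1$ (definite) is consistent with $\prod_v \epsilon(\phi,\chi_{0,v})\chi_{0,v}\eta_v(-1)$ over $v\in S$ together with the unramified places contributing $+1$; this consistency is exactly what the Jacquet–Langlands hypothesis on $\pi_B$ and the choice of $K_{0,S}$ encode, and I would invoke it rather than reprove it. With that in hand, non-emptiness follows from the duality computation above, and the torsor structure is formal, completing the proof.
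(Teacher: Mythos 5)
Your proposal is correct in substance and establishes the torsor statement exactly as the paper does (the ratio of two elements of $\fX_K$ is an everywhere-unramified character trivial on $K^\times\BA^\times$, hence a character of $\Cl_K$), but your route to non-emptiness is genuinely different from the paper's. You compute the obstruction to extending $\chi_{0,S}$ directly by Pontryagin duality: the character extends to $\BA_K^\times/K^\times\BA^\times$ unramified outside $S$ iff it kills $U_{0,S}\cap\overline{K^\times\BA^\times\widehat{\CO}_K^{(S),\times}}$, and you argue this reduces to triviality on $\CO_K^\times=\{\pm1\}$ (excluded cases $\BQ(i),\BQ(\omega)$ handled by Definition \ref{Theta}(iv)), which (LC1) supplies. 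The paper instead imports from Khare--Kiming a non-trivial auxiliary Hecke character $\epsilon$ of conductor one, trivial on $K_\infty^\times$ and $\BA^\times$, twists $\chi_{0,S}$ by $\epsilon|_{U_{0,S}}$, extends the twist, and untwists; the existence of $\epsilon$ is what absorbs the ``all but finitely many $K$'' hedge there. Your approach is more self-contained and makes the obstruction visible; the paper's is shorter but leaves the extension step for $\chi_{0,S}'$ unjustified in exactly the same way yours would be without the duality computation. One caveat you should address to make your argument airtight: the intersection of $U_{0,S}$ with the kernel subgroup can contain more than $\{\pm1\}\cdot(\BQ_S^\times\cap U_{0,S})$ when a prime in $S$ ramifies in $K$ (e.g.\ $k=\sqrt{-\ell}$ generates a ramified prime and contributes a relation $\chi_{0,\ell}(k_\ell)=1$ not forced by (LC1)); this extra obstruction is not visibly handled by your units computation (nor, to be fair, by the paper's proof), so you should either restrict to $(N,D_K)=1$ as in the main theorems or check that (LC2) and the choice of $K_{0,S}$ dispose of it.
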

\begin{proof} 
Note that for all but finitely many imaginary quadratic fields $K$, we have that $\CO_K^\times=\BZ^\times$. We fix such an imaginary quadratic field from now. 

In view of class field theory, the class group is given by
$\Cl_{K}=\BA_K^\times/K^\times K_\infty^\times \BA^\times U$.
From the structure of $U$, there exists a non-trivial Hecke character $\epsilon$ of $\BA_K^\times$ with  conductor one, trivial on $K_\infty^\times$ and $\BA^\times$ for all but finitely many imaginary quadratic fields $K$ (\cite{KhKi}). 

Let $\chi_{0, S}'=\chi_{0, S} \cdot \epsilon|_{U_{0, S}}$. Then, there exists a character $\chi'$ of $\BA_K^\times/K^\times K_\infty^\times \BA^\times U$ extending $\chi_{0, S}'$. The character $\chi'\cdot \epsilon^{-1}$ is a desired one. This finishes the proof of first part. 

For $\chi \in \fX_K$, note that $\chi'\epsilon^{-1}\chi^{-1}$ factors through $\Cl_{K}$.

\end{proof}

Let $\CO$ be the ring of integers of the field $\BQ(\phi, \chi_{0, S})$ generated over $\BQ$ by the Hecke eigenvalues of the newform $\phi$ and the values of the local character $\chi_{0, S}$. Let $\fp$ be the prime ideal of $\CO$ corresponding to $\iota_p$. 
Let $\cO_{(\fp)}$ be the localisation of the integer ring $\cO$ at the prime ideal $\fp$.

We have the following existence of toric test vectors. 
\begin{lem}
\label{TV0}
There exists a non-zero $\cO_{(\fp)}$-valued form $f\in \pi_B$ such that the ideal generated by the image of $f$ is prime to $\fp$ and $f$ satisfies the following.
\begin{itemize}
\item[(F1)] The subgroup $U_{0, S}$ acts on $f$ via $\chi_{0, S}$ and
\item[(F2)] $f \in \pi^{U^{(S)}}$.
\end{itemize}
\end{lem}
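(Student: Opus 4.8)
The plan is to construct the test vector $f$ by first exhibiting a nonzero vector with the prescribed local behavior at all places, and then rescaling so that its arithmetic is $\fp$-optimal. First I would recall that the local representation theory of the Jacquet--Langlands transfer $\pi_B = \otimes_v \pi_{B,v}$ together with the local conditions (LC1), (LC2) forces each local component $\pi_{B,v}$ for $v \in S$ to contain a (unique up to scalar, by the local multiplicity-one / Tunnell--Saito theorem governing $\chi_{0,v}$-equivariant functionals) line on which $K_{0,v}^\times$ — or rather $U_{0,S}$ in the relevant sense — acts via $\chi_{0,S}$; this is exactly the condition (F1). Indeed (LC2) is the precise local dichotomy that makes the $\chi_{0,v}$-eigenspace nonzero on $B_v$ rather than on its split counterpart, and (LC1) is the compatibility of central characters needed for such a functional to exist. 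At the split and archimedean places this amounts to choosing appropriate vectors (an $\cO_{K_{0,v}}^\times$-fixed new-type vector at split $v$, the standard vector at $\infty$ since $B$ is definite so $\pi_{B,\infty}$ is the trivial representation). At the places $v \nmid N\infty$ outside $S$ I take the spherical vector, which is $R^{(S)\times} = U^{(S)}$-fixed, giving (F2).

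Next I would address integrality. Since $\pi_B$ corresponds under Jacquet--Langlands to the rational newform $\phi$, the space $\pi_B$ has a natural $\BQ(\phi,\chi_{0,S})$-rational (in fact $\cO$-integral) structure coming from functions on the finite Shimura set $X_U$ valued in $\cO$; concretely, once the level $U = U^{(S)} \times U_{0,S}$-type is fixed, the $\chi_{0,S}$-eigenspace is a finitely generated $\cO$-module, and any nonzero element can be scaled to lie in $\cO_{(\fp)}$ and, after dividing by a generator of the $\fp$-part of the ideal it generates, to have image generating an ideal prime to $\fp$. The one subtlety is ensuring that clearing denominators does not destroy the eigen-property (F1): but since $U_{0,S}$ acts $\cO$-linearly and $\chi_{0,S}$ is $\cO^\times$-valued, scaling by an element of $\BQ(\phi,\chi_{0,S})^\times$ commutes with the action, so (F1) and (F2) are preserved under rescaling. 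Hence the rescaled $f$ still satisfies both.

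The main obstacle, and the point requiring the most care, is the nonvanishing of the $\chi_{0,S}$-eigenspace itself — that is, verifying that the local conditions (LC1)--(LC2) genuinely guarantee a nonzero local vector at the nonsplit finite places $v \mid N$, where $\pi_{B,v}$ may be a ramified principal series, special (Steinberg), or supercuspidal representation and $K_{0,v}/\BQ_v$ may be ramified or unramified quadratic. This is precisely where one invokes the local theory of toric periods: the relevant statement is that $\dim \Hom_{K_{0,v}^\times}(\pi_{B,v} \otimes \chi_{0,v}, \BC) = 1$ exactly when the local root-number identity in (LC2) holds (Tunnell--Saito), and combined with the conductor-one hypothesis on $\chi_{0,S}$, this pins down a specific line; one must check that this line is defined over $\BQ(\phi,\chi_{0,S})$, which follows from Galois-equivariance of the local functionals. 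I would handle this place by place, treating the unramified, ramified principal series, Steinberg, and supercuspidal cases separately, citing the explicit test-vector computations (e.g. of the type in Gross--Prasad, Cai--Shu--Tian, or the Waldspurger-formula literature already referenced as \cite{CST}, \cite{YZZ}). Once local nonvanishing is in hand at each place, the tensor-product vector is the desired $f$ up to the rescaling described above, completing the proof.
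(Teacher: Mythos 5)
Your proposal is correct in substance and rests on the same essential inputs as the paper's proof (the Tunnell--Saito dichotomy encoded in (LC2), multiplicity one for the toric functional, and the $\cO$-integral structure on $\pi_B$ coming from the Shimura set), but it is organized differently. You build $f$ locally, place by place: at each nonsplit $v\in S$ you extract the $\chi_{0,v}$-eigenline from $\pi_{B,v}$ (using compactness of $K_{0,v}^\times/\BQ_v^\times$ to convert the nonvanishing of $\Hom_{K_{0,v}^\times}(\pi_{B,v}\otimes\chi_{0,v},\BC)$ into the nonvanishing of an eigenspace), take new-type vectors at split places, the spherical vector outside $S$, and then worry about rationality of the resulting line and case-by-case local analysis (principal series, Steinberg, supercuspidal). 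The paper instead argues globally: by (RN) and Tunnell--Saito there is a nonzero $\ell\in\Hom_{\BA_K^\times}(\pi_B\otimes\chi,\BC)$, and \cite{CST} supplies a $U^{(S)}$-spherical, $\fp$-integral test vector $\varphi$ with $\ell(\varphi)\neq 0$; one then defines $f$ as the projection $g\mapsto \int_{U_{0,S}/\BQ_S^\times\cap U_{0,S}}\chi_{0,S}(t)\varphi(gt)\,dt$, which is nonzero because $\ell(f)$ is a nonzero multiple of $\ell(\varphi)$ by the $\chi$-equivariance of $\ell$ and one-dimensionality of the space of such functionals, and finally rescales $f$ to be $\fp$-primitive exactly as you do. The global averaging buys a very short proof that outsources all the local case analysis to the test-vector results of \cite{CST}; your local route is more self-contained in principle but requires you to actually carry out (or cite) the explicit local eigenvector computations in each ramification type, together with the rationality check on the eigenline, which is the step you correctly flag as the delicate one. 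Both arguments are valid.
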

\begin{proof}
In view of (RN), there exists a non-zero $\ell \in \Hom_{\BA_{K}^{\times}}(\pi_{B} \otimes \chi, \BC)$ (\cite{S}, \cite{T0} and \cite{T}). Here $\BA_{K}^\times$ acts trivially on $\BC$.

From \cite{CST}, we thus have $\varphi \in \pi_{B}^{U^{(S)}}$ such that $\ell(\varphi)\neq 0$.
We further choose $\varphi$ to be $\fp$-integral.
For $t \in U_{0,S}$, note that
$$
\ell(\chi_{0,S}(t)\varphi)=\chi_{0,S}(t)\varphi.
$$
Recall that 
$$\dim_{\BC}\Hom_{K_{\BA}^{\times}}(\pi_{B} \otimes \chi, \BC)=1.$$

We can thus take $f$ to be the automorphic form given by
$$
g \mapsto \int_{U_{0,S}/\BQ_{S}^{\times}\cap U_{0,S}} \chi_{0,S}(t)\varphi(gt) dt .
$$
We further normalise $f$ to be $\fp$-primitive by requiring that the ideal generated by the image of $f$ is prime to $\fp$.
\end{proof}
Note that $f$ is spherical outside $S$.


Let $U$ be an open compact subgroup of ${B}_{\BA}^{(\infty),\times}$ such that 
$U=U_{S}U^{(S)}$ with $U_{S}\subset B_{S}^\times$ and $U_{S}=\prod_{v \in S} U_{v}$ such that\begin{itemize}
\item[(L1)] $f\in \pi_{B}^U$ and
\item[(L2)] $\CO_{0, v}^\times \subset U_v$ for all $v\in S \backslash \{\infty\}$. 
\end{itemize}

Let $X_{U}$ be the corresponding Shimura set of level
$U$ given by
$$
X_{U}=B^{\times} \backslash  B_{\BA}^{(\infty),\times} / U.
$$

Let
$$
\overline{f}: X_{U} \rightarrow \BF
$$
be the reduction of $f$ modulo $\fp$. 

 Let $\fp_{0}$ be the prime above $p$ in the Hecke field $L_\phi$ determined via the embedding $\iota_p$. 
Let $\rho_{\phi,\fp_{0}}: G_{\BQ} \ra \GL_{2}(L_{\phi,\fp_{0}})$ be the corresponding $p$-adic Galois representation. 
Let $\overline{\rho}_{\phi,\fp_{0}}$ be the reduction modulo $\fp_{0}$.

We have the following non-constancy regarding the reduction $\ov{f}$. 

\begin{lem}
\label{non-constancy}
Assume that the mod $p$ Galois representation $\overline{\rho}_{\phi,\fp_{0}}$ associated to the newform $\phi$ is absolutely irreducible.
Let $f$ be a toric form as above with non-zero reduction modulo $\fp$.
Then, the mod $\fp$ reduction $\overline{f}$ is non-constant.
\end{lem}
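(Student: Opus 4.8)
The plan is to argue by contradiction: suppose $\overline{f}$ is constant, say $\overline{f}\equiv c\in\BF$. Since the ideal generated by the image of $f$ is prime to $\fp$, the constant $c$ is nonzero. First I would translate the statement that $\overline{f}$ is constant on $X_U$ into a statement about the Hecke action. Recall that $X_U=B^\times\backslash B_\BA^{(\infty),\times}/U$ is a finite set, and the space of $\BF$-valued functions on $X_U$ carries an action of the Hecke algebra away from $S$ (and also at the places where $U_v$ is maximal); by strong multiplicity one / Jacquet--Langlands, the Hecke eigensystem of $f$ on $\pi_B^U$ is that of $\phi$, reduced modulo $\fp_0$. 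The constant function $\mathbf 1$ on $X_U$ is an eigenvector for these Hecke operators with the \emph{Eisenstein} eigenvalues: $T_\ell$ acts by $\ell+1$ for $\ell\nmid N$ (the degree of the correspondence), and $U_\ell$-type operators act accordingly. So if $\overline f=c\,\mathbf 1$ with $c\neq 0$, then $\mathbf 1$ lies in the $\overline\rho_{\phi,\fp_0}$-isotypic Hecke eigenspace of $\BF$-valued functions on $X_U$, which forces the congruence
$$
a_\ell(\phi)\equiv \ell+1 \pmod{\fp_0}\qquad\text{for almost all }\ell,
$$
equivalently $\tr\overline\rho_{\phi,\fp_0}(\Frob_\ell)\equiv 1+\ell\pmod{\fp_0}$ for almost all $\ell$.

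The second step is to see that this congruence contradicts absolute irreducibility of $\overline\rho_{\phi,\fp_0}$. By Chebotarev the trace identity $\tr\overline\rho_{\phi,\fp_0}=1+\overline\chi_{\mathrm{cyc}}$ holds on all of $G_\BQ$, where $\overline\chi_{\mathrm{cyc}}$ is the mod $p$ cyclotomic character. A two-dimensional semisimple representation is determined by its trace (and the characteristic polynomial is determined by trace and determinant, here $\det\overline\rho_{\phi,\fp_0}=\overline\chi_{\mathrm{cyc}}$), so $\overline\rho_{\phi,\fp_0}$ would be forced to be the semisimplification $\mathbf 1\oplus\overline\chi_{\mathrm{cyc}}$, hence reducible --- contradicting the hypothesis. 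This completes the argument once Step 1 is justified.

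The technical heart is therefore Step 1: identifying the constant function on $X_U$ with an Eisenstein Hecke eigenclass and showing that a constant reduction of $f$ lands in that eigenspace. The one subtlety is that $f$ need not be a Hecke eigenform at the bad places in $S$, so I would avoid asserting $\overline f$ itself is a Hecke eigenclass globally; instead I would use only Hecke operators $T_\ell$ with $\ell\notin S$ (where $f$ \emph{is} an eigenform with eigenvalue $a_\ell(\phi)$, since $f\in\pi_B^{U^{(S)}}$ is spherical outside $S$). Then $\overline f\equiv c\,\mathbf 1$ gives $a_\ell(\phi)\,c\equiv (\ell+1)\,c\pmod{\fp_0}$ for all $\ell\notin S$, and $c\neq 0$ yields the congruence for almost all $\ell$, which is enough for the Chebotarev argument. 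One should also note that $\pi_B$ is not one-dimensional (otherwise $\phi$ itself would be Eisenstein, excluded since $\phi$ is a genuine weight-$2$ newform, or more directly since $\overline\rho_{\phi,\fp_0}$ is irreducible), so $f$ genuinely is a cusp form and there is no vacuous-triviality issue.

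I expect the main obstacle to be purely expository: making precise the integral normalization so that ``reduction modulo $\fp$'' of the Hecke eigenvalue equations is legitimate --- i.e. that $a_\ell(\phi)\in\cO$ and the operator $T_\ell$ preserves the $\cO_{(\fp)}$-lattice of functions on $X_U$ generated by $f$ --- and then invoking the standard fact (Deligne--Serre, Ribet, or the description of the mod $p$ Jacquet--Langlands correspondence on definite Shimura sets) that an $\BF$-valued Hecke eigensystem on functions on $X_U$ with Eisenstein eigenvalues at almost all $\ell$ forces reducibility of the attached Galois representation. Granting the hypothesis of absolute irreducibility, the contradiction is then immediate.
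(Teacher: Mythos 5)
Your proposal is correct and follows essentially the same route as the paper's (much terser) argument: constancy of $\overline{f}$ forces the prime-to-$S$ Hecke eigensystem of $f$, which agrees with that of $\phi$, to be Eisenstein modulo $\fp_{0}$, contradicting the absolute irreducibility of $\overline{\rho}_{\phi,\fp_{0}}$. The extra details you supply (the eigenvalue $\ell+1$ of $T_{\ell}$ on the constant function, the Chebotarev/Brauer--Nesbitt step, and the care about bad places in $S$) are exactly the steps the paper leaves implicit.
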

\begin{proof}
Suppose it is not the case. From the definition of $f$, it is a Hecke eigenform outside the Hecke operators arising from primes in $S$. Moreover, the Hecke eigensystem is the same as that of $\phi$.
From the constancy, the eigensystem outside $S$ is Eisenstein. This contradicts the irreducibility assumption.

\end{proof}

We identify $\BA_{K}^\times$ as a subgroup of $B_{\BA}^\times$ under the embedding $\iota_{K}: K \hookrightarrow B$.
As $\iota_{K}(\cO_{K}) \subset U$, the choice of embedding $\iota_{K}$ gives rise to a map
$$
\varphi_{K}:\Cl_{K}\rightarrow X_{U}.
$$
For $\sigma \in \Cl_{K}$, let $x_{\sigma} \in X_{U}$ be the corresponding point on the Shimura set.
This is usually referred as a special point on the Shimura set. In what follows, the map
$\varphi_{K}$ plays an underlying role.

For $\chi \in \widehat{\Cl}_{K}$,
let $P_{f}(\chi)$ be the toric period given by
\begin{equation}
P_{f}(\chi)=\frac{1}{h_{K}}\cdot\sum_{\sigma \in \Cl_{K}} \chi(\sigma)^{-1}f(x_{\sigma}).
\end{equation}
As $f$ is $\fp$-integral and $p\nmid h_{K}$, the period is $\fp$-integral.
More precisely, $P_{f}(\chi)\in \cO_{(\fp)}[\chi]$. 
Here $\cO_{(\fp)}[\chi]$ denotes the extension of $\cO_{(\fp)}$ obtained by adjoining the values of $\chi$. 

The period
possibly depends on the choice of the torus embedding $\iota_K$.

\begin{remark}
(1). For $\chi \in \widehat{\Cl}_{K}$, the non-vanishing of $P_{f}(\chi)$ implies that $\chi \in \fX_{K}$.
This follows from $f$ being $\chi_{0,S}$-toric.
The observation will be used in the proof of Theorem \ref{toric1}. 

(2). We restrict to newforms with trivial central character and unramified Hecke characters over the imaginary quadratic fields for simplicity. 
\end{remark}

\subsection{Non-vanishing}
In this subsection, we prove the horizontal mod $p$ non-vanishing of toric periods of a modular form on a definite quaternion algebra.

Let the notation and assumptions be as in \S2.1. In particular, $\phi$ is a weight two newform and $L_\phi$ the corresponding Hecke field. Let $\fp_{0}$ be the prime above $p$ in $L_\phi$ determined via the embedding $\iota_p$. 
Recall that $\rho_{\phi,\fp_{0}}: G_{\BQ} \ra \GL_{2}(L_{\phi,\fp_{0}})$ denotes the corresponding $p$-adic Galois representation. 
Moreover, $\overline{\rho}_{\phi,\fp_{0}}$ denotes the reduction modulo $\fp_{0}$.

Our main result regarding the mod $p$ non-vanishing of toric periods is the following.
\begin{thm}
\label{toric1}
Let $\phi \in S_{2}(\Gamma_{0}(N))$ be a newform.
Let $p\nmid N$ be a prime such that
\begin{itemize}
\item[(irr)] the mod $p$ Galois representation $\ov{\rho}_{\phi, \fp_{0}}$ is absolutely irreducible.
\end{itemize}
Let $f$ be a toric test vector on a definite quaternion algebra as above.
Let $\Theta=\Theta_S$ be the set of imaginary quadratic extensions as in Definition \ref{Theta}. 
For $K \in \Theta$, let $\fX_K$ be the set of finite order Hecke character over $K$ as in Definition \ref{char}. For $\chi \in \fX_K$, let $P_{f}(\chi)$ be the toric period corresponding to the pair $(f,\chi)$. 

For $\epsilon > 0$, we have
$$
 \#\left\{\chi\in\fX_K\ \Big|\ v_{p}(P_{f}(\chi))=0 \right\}
\gg_{\epsilon} \log(|D_{K}|)^{1-\epsilon}.
 $$
 as $K$ varies in $\Theta$.
\end{thm}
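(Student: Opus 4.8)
### Proof Plan

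The plan is to reduce the statement to an equidistribution result for the special points $\varphi_K(\Cl_K) \subset X_U$ and then to extract the quantitative lower bound by a Fourier-analytic argument combined with an elementary group-theoretic lemma. First I would set up the Fourier transform on $\Cl_K$: for each $\chi \in \wh{\Cl}_K$, the toric period $P_f(\chi) = h_K^{-1}\sum_{\sigma} \chi(\sigma)^{-1} f(x_\sigma)$ is, up to the factor $h_K^{-1}$ which is a $p$-adic unit by the hypothesis $p \nmid h_K$, precisely the $\chi$-Fourier coefficient of the function $\sigma \mapsto f(x_\sigma)$ on $\Cl_K$. Thus $v_p(P_f(\chi)) = 0$ for a given $\chi$ iff the reduction $\overline{f}(x_\sigma) \bmod \fp$, viewed as a function on $\Cl_K$, has nonzero $\chi$-component over $\BF$. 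The collection of $\chi$ with $v_p(P_f(\chi)) = 0$ is therefore the support of the Fourier transform of the $\BF$-valued function $\sigma \mapsto \overline{f}(x_\sigma)$.

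The main input is an equidistribution statement: for any subgroup $H_K \le \Cl_K$ with $[\Cl_K : H_K] \ll |D_K|^\delta$, the image $\varphi_K(H_K)$ becomes equidistributed in $X_U$ (with respect to the natural mass) as $|D_K| \to \infty$ — this is Michel's equidistribution, resting on the Michel--Venkatesh subconvex bound and the explicit Waldspurger formula of Cai--Shu--Tian. Concretely I would use this as follows. Let $S_K = \{\chi \in \fX_K : v_p(P_f(\chi)) = 0\}$, and let $H_K^\perp \le \Cl_K$ be the subgroup of $\sigma$ on which every $\chi \in S_K$ is trivial, with $H_K$ its annihilator; then $|H_K| = |S_K'|$ where $S_K'$ is the subgroup generated by $S_K$. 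If $|H_K|$ were small (index $\gg |D_K|^{\delta}$ would follow from $|S_K|$ being subpolynomially small), then $\sigma \mapsto \overline f(x_\sigma)$ would be constant on cosets of a large subgroup $H_K^\perp$, so $\overline f$ would be constant on each coset-image $\varphi_K(\sigma H_K^\perp)$. But equidistribution of $\varphi_K(H_K^\perp)$ forces $\overline f$ to be (asymptotically) constant on all of $X_U$, contradicting Lemma~\ref{non-constancy}, which gives non-constancy of $\overline f$ from the absolute irreducibility hypothesis (irr) via the Eisenstein/Hecke-eigensystem argument. This yields a lower bound $|S_K'| \gg |D_K|^{\delta'}$ for some $\delta' > 0$, i.e. the subgroup generated by the $p$-regular characters is polynomially large.

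Finally I would invoke the elementary group-theoretic lemma alluded to in the introduction: if a subset $S$ of a finite abelian group generates a subgroup of size $\ge M$, then $|S| \gg (\log M)^{1-\epsilon}$ for every $\epsilon > 0$ (the extreme case being $S$ sitting inside a product of many small cyclic factors, where one needs roughly $\log M / \log\log M$ generators, and in general the bound $(\log M)^{1-\epsilon}$ holds). Applying this with $M \gg |D_K|^{\delta'}$ gives $|S_K| \gg_\epsilon (\log |D_K|)^{1-\epsilon}$, which is the claimed bound since $\log |D_K| \asymp \log h_K \asymp \log |\Cl_K|$ up to the usual class-number bounds. One also needs that $S_K \subseteq \fX_K$ automatically, which is part (1) of the Remark after the definition of $P_f(\chi)$: nonvanishing of $P_f(\chi)$ forces $\chi$ to have the prescribed $S$-type because $f$ is $\chi_{0,S}$-toric.

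The hard part will be the precise packaging of Michel's equidistribution in the mod $p$ setting — specifically, arranging that the $\BF$-valued reduction $\overline f$ detects the equidistribution. Equidistribution is an archimedean/complex statement about mass on the finite set $X_U$, while we need information about an $\BF$-valued function; the bridge is that $\overline f$ takes finitely many values and is non-constant, so it is enough to know that each fiber of $\overline f$ in $X_U$ receives a positive proportion of the points $\varphi_K(H_K^\perp)$, which does follow from equidistribution once the level $U$ and the test vector $f$ have been fixed independently of $K$ (ensured by (F1)--(F2) and the choice of $U$ in \S2.1). A secondary technical point is controlling the dependence on the torus embedding $\iota_K$ and checking that the Fourier coefficient $P_f(\chi)$ really is the one governed by Shimura's reciprocity law describing the Galois action on the $x_\sigma$; this is where one uses that $\fX_K$ is a single $\Cl_K$-orbit (Lemma~\ref{existence1}) so that varying $\iota_K$ only translates the problem within the orbit.
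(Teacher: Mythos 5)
Your overall architecture matches the paper's: identify $P_f(\chi)$ mod $\fp$ with the $\chi$-Fourier coefficient of $\overline f\circ\varphi_K$ on $\Cl_K$ (using $p\nmid h_K$), apply Michel's equidistribution to the common kernel of the non-vanishing characters to deduce that the subgroup $S_K'$ they generate satisfies $|S_K'|\gg |D_K|^{\delta'}$ (else $\overline f$ would be forced to be constant on $X_U$, contradicting Lemma~\ref{non-constancy}), and then pass from the size of $S_K'$ to the size of $S_K$. The gap is in this last step. The ``elementary group-theoretic lemma'' as you state it --- any subset of a finite abelian group generating a subgroup of size $\ge M$ has cardinality $\gg_\epsilon(\log M)^{1-\epsilon}$ --- is false: a cyclic group of order $M$ is generated by a single element, so nothing as stated prevents $|S_K|=1$ while $|S_K'|$ is as large as you like. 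Your ``extreme case'' of a product of many small cyclic factors is the case where many generators are forced; the opposite extreme of one large cyclic factor kills the claim.

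What rescues the argument, and what the paper's Lemma~\ref{stability} actually assumes via its hypothesis (St), is that the set $S_K$ of characters with $\overline{P_f(\chi)}\neq 0$ is stable under $\Gal(\overline{\BF}/\BF_q)$ for the \emph{fixed} finite field $\BF_q=\BF_p(\overline{\phi},\overline{\chi_{0,S}})$ containing the values of $\overline f$: since the Fourier coefficients satisfy $\overline{P_f(\chi^\sigma)}=\overline{P_f(\chi)}^{\sigma}$, non-vanishing is a Galois-invariant condition on $\chi$. A Galois-stable generating set must then contain, for each invariant factor $\BZ/(n_i)$ of $S_K'$, a full Galois orbit of a character of exact order $n_i$, of size $[\BF_q(\mu_{n_i}):\BF_q]\ge\varphi(n_i)/(n_i,q-1)$, and summing over $i$ gives $|S_K|\gg_\epsilon\bigl(\sum_i\ln n_i\bigr)^{1-\epsilon}=(\log|S_K'|)^{1-\epsilon}$. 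You mention Shimura reciprocity only as a ``secondary technical point'' about the choice of $\iota_K$; in fact this Galois-stability is the essential input to the counting lemma, without which the quantitative conclusion collapses to $|S_K|\ge 1$.
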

\begin{proof}

Recall that $f: X_{U} \rightarrow \cO_{(\fp)}$ is a function on the Shimura set $X$ and $\overline{f}$ its mod $\fp$ reduction. In view of the map
$\varphi_{K}:\Cl_{K}\rightarrow X_{U}$ of special points, we may regard $f$ and $\overline{f}$ as functions on the abelian group $\Cl_{K}$.

The approach is based on the Fourier analysis on $\Cl_{K}$ and its relation to the Shimura set $X_{U}$.


To begin with, the toric periods $P_{f}(\chi)$ modulo $\fp_{\chi}$ equal the $\chi$-th Fourier coefficients of the function
$$\sigma \mapsto f(x_{\sigma}) \mod{\fp}$$ on the finite abelian group $\Cl_{K}$.
Here $\fp_{\chi}$ (resp. $\fp$) is the prime above $p$ in $L_{\phi,\chi}$ (resp. $L_{\phi}'$) determined via the embedding $\iota_p$.
Here $L_{\phi}'$ denotes the Hecke field corresponding to the pair $(\phi,\chi_{0,S})$ and $L_{\phi,\chi}$ its extension obtained by adjoining the values of $\chi$.

In what follows, we thus consider Fourier analysis of the function
$$\overline{f}:\Cl_{K}\rightarrow \BF$$
and its variation for $K \in \Theta$ with $\BF$ an algebraic closure of $\BF_p$. 
As $p \nmid h_{K}$, the Fourier inversion works as usual.

Let us first consider the non-vanishing of at least one twist.
In view of the Fourier inversion, it suffices to show that
\begin{equation}
\label{pCM}
\overline{f}(x_{\sigma})\neq 0
\end{equation}
for at least one $\sigma \in \Cl_{K}$ for all but finitely many $K$. 

By a result of Iwaniec (\cite{I}), the special points
$$
 \left\{\ x_{\sigma} \ \Big|  \sigma \in \Cl_{K} \right\} \subset X_{U}
$$
become equidistributed on $X_{U}$ with respect to the probability measure on the finite set $X_U$ 
as $|D_{K}| \rightarrow \infty$. 
Strictly speaking, the equidistribution is a consequence of Iwanicec's main result in \cite{I} as noticed in Michel (\cite{M}).

In view of the $p$-primitivity of $f$ (Lemma \ref{TV0}),
the equidistribution readily implies (\ref{pCM}).

We now consider the growth in the number of non-vanishing twists.
Suppose that there exists a positive integer $\ell_{K}$ such that exactly $\ell_{K}\geq 1$ of the toric periods
$P_{f}(\chi)$
are non-vanishing modulo $\fp_\chi$, for $K$ with sufficiently large discriminant. Let
$$
\Xi_{K}=\{ \chi_{K,1}, ..., \chi_{K,\ell_{K}}\}
$$
 be the set of the non-vanishing modulo $\fp_\chi$ twists.

Note that
$$
\overline{P_{f}(\chi)} \neq 0 \iff \overline{P_{f}(\chi^{\sigma})}\neq 0
$$
for $\sigma \in \Gal(\BF/\BF_{p}(\overline{\phi},\overline{\chi_{0,S}}))$. 
Here $``\ov{\cdot}"$ denotes the reduction modulo a prime ideal above $p$ determined via the embedding $\iota_p$. 
As $\Im(\overline{f}) \subset \BF_{p}(\overline{\phi},\overline{\chi_{0,S}}))$, the above version of reciprocity law is evident in the setup.
We conclude that $\Xi_{K}$ is invariant under the action of $\Gal(\BF/\BF_{p}(\overline{\phi},\overline{\chi_{0,S}}))$.

Let $C_{K} \subset \widehat{\Cl}_{K}$ the subgroup generated by these characters and let $H_{K} \subset \Cl_{K}$ be the orthogonal complement of $C_{K}$. The orthogonal complement is with respect to the pairing on $\widehat{\Cl}_{K}$ and its character group.
Recall that $|H_{K}|=|\Cl_{K}|/|C_{K}|$.

In view of the Fourier inversion on $\Cl_{K}$ and the hypothesis on the periods, we have
\begin{equation}
\label{inversion1}
\overline{f}(x_{\sigma})=\sum_{i=1}^{\ell_{K}} c_{K,i}\chi_{K, i}(\sigma).
\end{equation}
Here $c_{K,i}\in \BF$ are the Fourier coefficients possibly dependent on $K$ and determined by
the pairs $(\overline{f},\chi_{K,i})$.
From the above formula on $H_{K}$ (\ref{inversion1}), we note that $f$ is constant on $H_{K}$. 

We now recall a refinement of Iwaniec's equidistribution. By a result of Michel (\cite[\S6]{M}), the special points
$$
 \left\{\ x_{\sigma} \ \Big|  \sigma \in H'_{K}, [\Cl_{K}: H'_{K}]\leq |D_{K}|^{\delta}\right\} \subset X_{U}
$$
become equidistributed on $X_{U}$ with respect to the probability measure
as $|D_{K}| \rightarrow \infty$ for an absolute constant $\delta > 0$ (remark below).

It thus follows that
$$|C_{K}| \gg |D_{K}|^{\delta}.$$
Otherwise, this contradicts the non-constancy in Lemma \ref{non-constancy}.

From the Galois-stability of $\Xi_{K}$ and an elementary Lemma \ref{stability} below, we conclude
$$
\ell_{K}\gg_{\epsilon} \log(|\Cl_{K}|)^{1-\epsilon}
$$
for any $\epsilon > 0$.

This finishes the proof.
\end{proof}
\noindent
\begin{remark}
(1). In \cite[\S6]{M}, the equidistribution is proven for the special points arising from the maximal order $\cO_K$. In this case, $\delta$ can be taken to be $1/2115$. The hypothesis seems to be made for simplicity and partly arises from
the availability of explicit Waldspurger formula at the time. In \cite{CST}, the formula has been established unconditionally.

(2). The approach only requires surjectivity of the map $$\varphi_{K}: H_{K} \rightarrow X_{U}$$
with $H_{K}$ as in the proof for a sufficiently large discriminant. 

(3). Under the assumptions in Theorem \ref{toric2}, it seems tempting to predict
$$
 \#\left\{\chi\in\fX_{K}\ \Big|\ v_{p}(P_{f}(\chi))=0 \right\}
\gg_{\epsilon} |\Cl_{K}|^{1-\epsilon}
 $$
for any $\epsilon > 0$. 

 (4). For related equidistribution results on a quaternionic Shimura variety, we refer to \cite{Zh}.
\end{remark}
The following elementary lemma is used in the above proof.
\begin{lem}
\label{stability}
Let $\BF_{q}$ be a finite field with characteristic $p$.
Let $G$ be a finite abelian group and $\widehat{G}$ the $\BF^\times$-valued character group.
Suppose that $p \nmid |G|$.
Let $S \subset \widehat{G}$ be a Galois-stable subset i.e.
\begin{itemize}
\item[(St)] If $\chi \in S$, then the Galois conjugates $\chi^{\sigma} \in S$ for any $\sigma \in \Gal(\overline{\BF}/\BF_{q})$.
\end{itemize}

Let $\widehat{H_{S}} \subset \widehat{G}$ be the subgroup generated by $S$.  Consider a sequence $(G_{i},S_{i})$ as above with $|G_{i}| \rightarrow \infty$. Suppose that there exists
$\delta$ with $ 0 < \delta < 1$ such that $|\widehat{H_{S_{i}}}| \gg |G_{i}|^{\delta}$. For $\epsilon >0$, we then have
$$
|S_{i}| \gg_{\epsilon} \ln(|G_{i}|)^{1-\epsilon}.
$$
\end{lem}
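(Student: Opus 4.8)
The plan is to exploit the Galois-stability hypothesis (St) to force $|S_i|$ to be large: a Galois-stable set of characters is a union of Galois orbits, and each Galois orbit of a character $\chi$ of order $n$ has size equal to the multiplicative order of $q$ modulo $n$, which is at most $\log_q(n)$-ish only when $q$ has small order mod $n$ — but typically it is much larger. The subgroup $\widehat{H_{S_i}}$ generated by $S_i$ has order $\gg |G_i|^\delta$, so it contains a character $\chi$ of order $n_i$ with $n_i \to \infty$ (indeed $n_i \geq |\widehat{H_{S_i}}|^{1/r}$ where $r$ is the number of generators, but since $\widehat{H_{S_i}}$ is generated by $S_i$ this needs care). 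First I would reduce to the case where $\widehat{H_{S_i}} = \widehat{G_i}$, i.e. $S_i$ generates the whole character group — this is harmless since we may replace $G_i$ by the quotient dual to $\widehat{H_{S_i}}$, whose order is still $\gg |G_i|^\delta$, hence still tends to infinity, and a lower bound $\ln(|\widehat{H_{S_i}}|)^{1-\epsilon} \gg \ln(|G_i|)^{1-\epsilon'}$ is preserved up to adjusting $\epsilon$.

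So assume $S_i$ generates $\widehat{G_i}$. Write $m_i = |S_i|$. Since $\widehat{G_i}$ is generated by $m_i$ elements, it is a quotient of $\BZ^{m_i}$, so $G_i$ embeds into $(\BQ/\BZ)^{m_i}$ and in particular the exponent $e_i$ of $G_i$ satisfies $|G_i| \leq e_i^{m_i}$, giving $m_i \geq \ln|G_i|/\ln e_i$. This alone is not enough; the point is that $S_i$ being Galois-stable means it is large relative to its generated subgroup. Here is the key estimate: let $\chi \in S_i$ have order $n$. Its Galois orbit under $\Gal(\overline{\BF}/\BF_q)$ consists of $\chi^{q^j}$ and has cardinality $\mathrm{ord}_n(q)$, the multiplicative order of $q$ in $(\BZ/n)^\times$. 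By (St), all these lie in $S_i$. Now I would argue that among the elements of $S_i$ we can find one whose order $n$ is large: since $\langle S_i\rangle = \widehat{G_i}$ has order $|G_i|$, and $S_i$ has $m_i$ elements, the exponent of $\widehat{G_i}$ — which equals $e_i$ — must be realized, and more usefully the product of orders of elements of $S_i$ is at least $|G_i|$, so some $\chi \in S_i$ has order $n \geq |G_i|^{1/m_i}$.

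Now combine. On one hand $m_i \geq \mathrm{ord}_n(q)$ for this $\chi$ (since the whole Galois orbit of $\chi$ sits inside $S_i$). On the other hand $n \geq |G_i|^{1/m_i}$, and $p \nmid |G_i|$ forces $p \nmid n$, so $\mathrm{ord}_n(q)$ is defined and we use the elementary bound $\mathrm{ord}_n(q) \geq \frac{\ln n}{\ln q}$ — wait, that is the wrong direction; actually $q^{\mathrm{ord}_n(q)} \equiv 1 \pmod n$ forces $q^{\mathrm{ord}_n(q)} \geq n+1$, hence $\mathrm{ord}_n(q) \geq \frac{\ln(n+1)}{\ln q}$. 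Therefore
\[
m_i \;\geq\; \mathrm{ord}_n(q) \;\geq\; \frac{\ln(n)}{\ln q} \;\geq\; \frac{1}{\ln q}\cdot\frac{\ln|G_i|}{m_i},
\]
so $m_i^2 \geq \frac{\ln|G_i|}{\ln q}$, i.e. $m_i \gg (\ln|G_i|)^{1/2}$. This is already a power of the logarithm but with exponent $1/2$, not $1-\epsilon$. To push to $1-\epsilon$ I would iterate or sharpen: instead of picking a single $\chi$ of order $|G_i|^{1/m_i}$, note that by the structure theorem $\widehat{G_i} \cong \bigoplus_j \BZ/d_{i,j}$ with $d_{i,1}\mid d_{i,2}\mid\cdots$, and $S_i$ must generate, so after a change of generators we may assume $S_i$ hits each cyclic factor; the Galois orbits of generators of the distinct cyclic factors $\BZ/d_{i,j}$ are disjoint (their orders differ when the $d_{i,j}$ are distinct) and each has size $\mathrm{ord}_{d_{i,j}}(q) \geq \frac{\ln d_{i,j}}{\ln q}$. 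Summing, and using $\prod_j d_{i,j} = |G_i|$ together with $\sum_j \ln d_{i,j} = \ln|G_i|$, one gets $m_i \geq \frac{1}{\ln q}\sum_j \ln d_{i,j} \cdot (\text{something})$ — the honest way to get $1-\epsilon$ is: if there are $r$ distinct cyclic factors then $m_i \geq \max_j \mathrm{ord}_{d_{i,j}}(q) \geq \frac{\ln d_{i,r}}{\ln q}$ and also $m_i \geq r$ roughly, and $d_{i,r}^{\,r}\geq |G_i|$ gives $\ln d_{i,r}\geq \frac{1}{r}\ln|G_i|$, so $m_i \gtrsim \frac{\ln|G_i|}{r\ln q}$ and $m_i \gtrsim r$, whence $m_i^2 \gtrsim \frac{\ln|G_i|}{\ln q}$ again — the exponent $1/2$ seems intrinsic to this crude argument, so to reach $1-\epsilon$ I expect one needs the finer input that a Galois orbit inside a cyclic group of order $d$ generates a subgroup of order $\geq d^{1-\epsilon}$ for all but finitely many $d$ (this is a standard fact: the orbit $\{q^j \bmod d\}$ generates the subgroup of $(\BZ/d)^\times$-... no, it generates a subgroup of $\BZ/d$ of index dividing $\gcd$ of $q^j-1$'s, and by Zsygmondy/cyclotomic estimates this index is $d^{o(1)}$).

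\textbf{Main obstacle.} The genuinely hard step is upgrading the exponent from $1/2$ to $1-\epsilon$. The crude counting "some element has order $|G_i|^{1/m_i}$, its Galois orbit has size $\geq \log$ of that order" only yields $m_i \gg (\ln|G_i|)^{1/2}$. The refinement must use that a Galois orbit of a character of order $d$ (coprime to $p$) not merely has size $\geq \frac{\ln d}{\ln q}$ but in fact generates a subgroup of order $d^{1-o(1)}$ — equivalently, that $\mathrm{ord}_d(q)$ together with the index of $\langle q\rangle$ in $(\BZ/d)^\times$ conspire so that the $\BF_q$-span of the orbit is nearly all of $\BZ/d$. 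Feeding this back: $\widehat{H_{S_i}} \supseteq$ the subgroup generated by the orbit of a single $\chi$, which has order $\geq d^{1-\epsilon}$ where $d = \mathrm{ord}(\chi) \leq m_i^{?}$... I would organize it as: pick $\chi \in S_i$ of maximal order $d = e_i$ (the exponent of $G_i$); its Galois orbit lies in $S_i$ and has size $\mathrm{ord}_{e_i}(q)$; the cyclic group $\langle\chi\rangle$ of order $e_i$ contains this orbit, which generates a subgroup of order $\geq e_i^{1-\epsilon}$ for $e_i$ large, so $|S_i| \geq \mathrm{ord}_{e_i}(q) \geq \frac{(1-\epsilon)\ln e_i}{\ln q}$; combined with $e_i^{m_i} \geq e_i^{|S_i|} \geq |\widehat{H_{S_i}}| \gg |G_i|^\delta$ — hmm, this needs $e_i$ not too small. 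If $e_i \geq |G_i|^{\delta/(2|S_i|)}$ we win one way; if $e_i$ is small then $G_i$ has many cyclic factors and $|S_i| \geq$ (number of factors) is large directly. Balancing these two regimes carefully is the technical heart of the argument, and I would expect the paper to do exactly this bookkeeping — splitting on whether the exponent of $G_i$ is large or small — to extract the clean bound $|S_i| \gg_\epsilon \ln(|G_i|)^{1-\epsilon}$.
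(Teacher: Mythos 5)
Your core idea --- that the Galois orbit of a character $\chi$ of order $n$ has $\mathrm{ord}_n(q)\geq \log_q(n+1)$ elements, all of which (St) forces into $S$ --- is exactly the engine of the paper's proof. The genuine gap is that you apply it to only \emph{one} element of $S$ (one of maximal or near-maximal order), which, as you candidly note, caps you at $|S_i|\gg(\ln|G_i|)^{1/2}$; and the upgrade you then sketch (that an orbit of a character of order $d$ generates a subgroup of order $d^{1-o(1)}$, followed by a case split on the exponent of $G_i$) is not the right repair --- the generated subgroup is trivially all of $\langle\chi\rangle$, and no such input is needed. The missing step is simply to run your single-orbit estimate over \emph{many} elements of $S$ simultaneously and add up the orbit sizes. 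Concretely: choose a minimal subset $T\subseteq S$ generating $\widehat{H_{S}}$. Then $\prod_{t\in T}\mathrm{ord}(t)\geq|\widehat{H_{S}}|$ (since $\widehat{H_{S}}$ is a quotient of $\bigoplus_{t\in T}\BZ/\mathrm{ord}(t)$), and distinct elements of $T$ have \emph{disjoint} Galois orbits, because two Galois-conjugate characters generate the same cyclic subgroup and minimality forbids that. Hence
$$
|S|\;\geq\;\sum_{t\in T}\mathrm{ord}_{\mathrm{ord}(t)}(q)\;\geq\;\frac{1}{\ln q}\sum_{t\in T}\ln\bigl(\mathrm{ord}(t)\bigr)\;=\;\frac{1}{\ln q}\,\ln\prod_{t\in T}\mathrm{ord}(t)\;\geq\;\frac{\ln|\widehat{H_{S}}|}{\ln q}\;\gg\;\frac{\delta\ln|G|}{\ln q},
$$
which is even a linear bound in $\ln|G|$ and in particular gives the claimed $(\ln|G_i|)^{1-\epsilon}$.

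For comparison, the paper implements the same summation by peeling off the cyclic factors of $\widehat{H_{S}}\simeq\BZ/(n_1)\times\cdots\times\BZ/(n_d)$ one at a time, producing $\chi_i\in S$ of order $n_i$ with pairwise disjoint orbits, bounding each orbit by $[\BF_q(\mu_{n_i}):\BF_q]\geq\varphi(n_i)/(n_i,q-1)$, and then invoking an analytic estimate of Robin to get $\sum_i\varphi(n_i)/(n_i,q-1)\gg_\epsilon(\sum_i\ln n_i)^{1-\epsilon}$; your bound $\mathrm{ord}_{n}(q)\geq\log_q(n+1)$ is cleaner and, once summed over a minimal generating subset as above, bypasses that analytic input entirely. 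So the fix is a one-line bookkeeping change to your own argument, not the two-regime analysis you anticipated; without it, the proposal as written does not establish the stated exponent.
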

\begin{proof}
As $H_S$ is abelian, say
$$
H_{S} \simeq \BZ/(n_{1}) \times \BZ/(n_{2}) \times ... \times \BZ/(n_{d})
$$
with $n_{1} \geq n_{2} \geq ... \geq n_{d}$.

It follows that there exists an element $\chi_{1}$ in the generating set $S$ with order $n_{1}$.
The number of Galois conjugates of $\chi_{1}$ over $\BF_{q}$ equals the degree $[\BF_{q}(\mu_{n_{1}}):\BF_{q}]$. 
Here $\mu_l$ denotes a primitive $l^{th}$ root of unity over $\BF_p$. 
By duality, the subgroup generated by $\chi_{1}$ corresponds to a quotient
$H_{S}\rightarrow C_{1}$. Note that
$$
\widehat{H_{S}}/\widehat{C_{1}} \simeq \BZ/(n_{2}) \times ... \times \BZ/(n_{d}).
$$
We conclude that there exists an element $\chi_{2}$ in the generating set $S$ with order $n_{2}$.

Repeating the process, we have
$$
|S| \geq \sum_{i} [\BF_{q}(\mu_{n_{i}}):\BF_{q}]\geq \sum_{i} \frac{\varphi(n_{i})}{(n_{i},q-1)}.
$$

As
$$
\sum_{i} \frac{\varphi(n_{i})}{(n_{i},q-1)} \gg_{\epsilon} \bigg{(}\sum_{i} \ln(n_{i})\bigg{)}^{1-\epsilon}
$$
for any $\epsilon > 0$, this finishes the proof.
The last estimate follows from \cite{R}.
\end{proof}
\begin{remark}
(1). Considering arbitrary finite products of $\BZ/(2)$, the estimate is optimal in general.

(2). For a sequence $(G_{i})_{i}$ as in Lemma \ref{stability} with the number of prime factors $\nu(|G_{i}|)$ being bounded, the proof shows that the lower bound for a minimal Galois-stable subset of the character group can be improved significantly.  
\end{remark}

\subsection{Tate--Shafarevich Groups I}
In this subsection, we describe bounds for the Tate--Shafarevich group in the analytic rank zero case in terms of a toric period based on an analysis Euler system of toric periods due to Bertolini--Darmon (\cite{BD}) as refined by Nekov\'a\v{r} (\cite{N0}). 

For convenience, let us briefly recall the Birch and Swinnerton-Dyer (BSD) conjecture.

 In the beginning of this subsection, let $A$ be an abelian variety over a number field $K$ such that $\cO_{L} \subset \End(A)$ for a number field $L$.
 Let $A^\vee$ be the dual abelian variety.
Let $L(s, A)$ be the L-series arising from the Tate module of $A$ at the finite primes of
$\CO=\CO_L$.
In particular, it is without Euler factor at infinite places.

 For a torsion $\CO$-module $M$, let $\Fitt_{\cO}(M)$ denote the Fitting ideal of $M$.

We have the following fundamental

\begin{conj}[BSD]\label{BSD}
Let $L(s,A)$ be the L-series for an abelian variety $A$
over a number field $K$ such that $\cO=\cO_{L} \subset \End(A)$ for a number field $L$. Then, the following holds.

(1). The $L$-series has analytic continuation to $\BC$.

(2). We have $\ord_{s=1}L(s,A)=\rank_{\cO}A(K)$.

(3). Let
$L^*(1, A)\in L\otimes_\BQ \BC$ be the leading coefficient of the L-series.
Then,
$$\CL(A):=\frac{L^*(1, A)}{\Omega(A/K) R(A/K)} \cdot \prod_{v} \Fitt_{\cO}(c_v(A))^{-1} \cdot
\Fitt_{\cO}(A(K)_\tor) \Fitt_{\cO}(A^\vee(K)_\tor)\subset L\otimes \BC$$
 is a fractional ideal of $\CO$ for $r=\ord_{s=1}L(s,A)=\rank_{\cO}A(K)$,
 period $\Omega(A/K)$, regulator $R(A/K)$ (\cite{BCW}) and Tamagawa number $c_{v}$.
 Moreover,
$$\CL(A)=\Fitt_{\cO}(\Sha(A/K)).$$

\end{conj}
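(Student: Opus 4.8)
The plan is, first, to acknowledge that the Birch--Swinnerton-Dyer conjecture is open in the stated generality; what I will actually sketch is a strategy only for the abelian varieties that occur in this paper — the Serre tensors $A=E_\chi$ attached to a modular abelian variety over $\BQ$ and a ring class character $\chi$ over an imaginary quadratic field $K$ — and only for the $\fp$-primary part of part (3), one prime $\fp\mid p$ at a time, since that is all the horizontal results of \S4 need. For such $A$, parts (1) and (2) would reduce, via Jacquet--Langlands and the factorization $L(s,E_\chi)=\prod_\sigma L(s,E,\chi^\sigma)$, to known cases of the rank conjecture for $\GL_2$: a root-number computation (organizing $\wh{\Cl}_K$ into $\fX_K^{\pm}$) shows each $L(s,E,\chi)$ has analytic rank at most one under the running hypotheses, Gross--Zagier--Kolyvagin gives the rank equality, and the Heegner point/toric period machinery of \S2--\S3 produces a point of infinite order exactly when that rank equals one.

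For the $\fp$-part of the Fitting-ideal identity I would run the two-sided argument already sketched in the introduction. From above: in the root number $+1$ case the Euler system of toric periods (Bertolini--Darmon \cite{BD}, refined by Nekov\'a\v{r}) bounds $\Sha(E/H_K)^\chi[\fp_\chi^\infty]$ in terms of $v_p(P_f(\chi))$, and in the root number $-1$ case the Euler system of Heegner points (Kolyvagin \cite{Ko}, refined by Nekov\'a\v{r} \cite{N0}) bounds it in terms of the index $[E(H_K)^\chi:\BZ[\chi]\,P_f(\chi)+E(H_K)^\chi_{\tor}]$, in each case up to an explicit Euler-system constant $C_{\chi,\fp}$, respectively $C'_{\chi,\fp}$. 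From below: the explicit Waldspurger/Gross--Zagier formulae of Cai--Shu--Tian \cite{CST} identify that same period with the leading $L$-value, so $\CL(E_\chi)$ equals the square of the period, respectively of the index, times a fudge ideal $D_\chi$, respectively $D'_\chi$. Matching the two bounds and then showing $v_p(D_\chi)=0$ and $C_{\chi,\fp}=0$ for $p$ outside an explicit finite bad set $\Sigma_E$ — encoding the Tamagawa numbers $c_\ell$, the invariants $d_\ell$, and the image of $\rho_{E,p}$ — upgrades the one-sided divisibility to the exact equality $v_\fp(\CL(E_\chi))=v_\fp(\Fitt_{\cO}(\Sha(E_\chi/K)))$. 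This is the content of \S2.3--\S2.4 and \S3.5--\S3.6, and combined with the horizontal mod $\fp$ non-vanishing of Theorem \ref{toric1} and its rank-one analogue it is what yields Theorems \ref{main0}--\ref{main2}.

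The hard part is threefold. First, the \emph{lower} bound on the $\fp$-torsion Selmer group — the statement that the Euler system class is not too small — is, in the classical route, an anticyclotomic main-conjecture input; here it must instead be extracted from the mod $\fp$ non-vanishing of the period, which is why the conclusion comes out quantitative ($\gg_\epsilon(\ln|D_K|)^{1-\epsilon}$ many admissible $\chi$) rather than for every $\chi$, and pinning down $\Sigma_E$ together with the vanishing of $C_{\chi,\fp}$ and $D_\chi$ uniformly along the horizontal family is the delicate bookkeeping. Second, the transcendental invariants $\Omega(A/K)$ and $R(A/K)$ must be normalized compatibly with the algebraic period formulae so that $\CL(A)$ is literally a fractional ideal of $\cO$ and not merely an element of $L\otimes\BC$; this Shimura-period comparison can again degenerate at finitely many primes. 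Third — and this is why the conjecture as written stays out of reach — nothing above controls $\ell$-parts for $\ell$ in the bad set, higher analytic rank, or abelian varieties not of $\GL_2$-type, where no Euler system is available; so the honest theorem is the horizontal $\fp$-part for the family $\{E_\chi\}$, not BSD itself.
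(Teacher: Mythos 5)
The statement you were given is Conjecture \ref{BSD} --- the Birch--Swinnerton-Dyer conjecture itself --- which the paper states as a conjecture (its role there is only to define the analytic Sha $\CL(A)$) and does not prove; your opening disclaimer is therefore exactly right. Your sketch of the accessible $\fp$-part for the Serre tensors $A_\chi$ faithfully reproduces the paper's actual route through Propositions \ref{BD}, \ref{control0}, \ref{Ko} and \ref{control1} and Corollaries \ref{BSD0} and \ref{BSD1}, so there is nothing to correct.
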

For a prime $\fp$ of $\cO_L$, we refer to the asserted equality in (3) for the $\fp$-parts as the $\fp$-part of the BSD (BSD$_\fp$).

We now consider our setup. Let the notation be as in \S2.2. 
In particular, $L_\phi$ denotes the Hecke field of the newform $\phi\in S_2(\Gamma_0(N))$.
Let $\pi$ the cuspidal automorphic representation of $\GL_{2}(\BA)$ corresponding to the newform 
$\phi$ so that we have the equality
$$L(s,\pi)=L(s,\phi)$$
of complex L-functions. 
Let $A$ be a $\GL_2$-type abelian variety  over $\BQ$ associated to  $\phi$ such that $\CO_{L_\phi}\subset \End (A)$.


To state the results, we introduce some notions. Let us begin with 
some results on the image of a $p$-adic Galois representation associated to a modular $\GL_2$-type abelian variety over the rationals 
(\cite[App. B]{N0}).
For simplicity, we suppose that $A$ does not have CM.  

Let $\Gamma \subset \Aut(L_{\phi}/\BQ)$ be the group of inner twists of $\pi$.
Recall that an inner twist of $\pi$ is a pair $(\sigma,\chi)$ with $\sigma$ an embedding of $L_{\phi}$ and $\chi$ a Dirichlet character such that there exists an isomorphism
$$^{\sigma}\pi \simeq \pi \otimes \chi$$
with $^{\sigma}\pi$ the $\sigma$-conjugate. 
For such a $\sigma$, we have a unique Dirichlet character $\chi_{\sigma}$ such that the above isomorphism holds (\cite[B.3]{N}). 
Let $L^{\Gamma}$ be the fixed subfield of $L_{\phi}$ corresponding to $\Gamma$.
For $\sigma \in \Gamma$, we thus have an inner twist arising from $\chi_{\sigma}$.
Let $\BQ_{\Gamma}/\BQ$ be the extension corresponding to $\bigcap_{\sigma \in \Gamma}\ker(\chi_{\sigma})$.
Let $\frak{p}_{\phi}|p$ be a prime in $L_{\phi}$ and $\frak{p}_{\Gamma}$ the corresponding prime in $L^{\Gamma}$.

From the $\frak{p}$-adic Tate-module of $A$, we have the $\frak{p}_{\phi}$-adic Galois representation $\rho_{\frak{p}_{\phi}}: G_{\BQ} \rightarrow \GL_{2}(O_{L_{\phi},\frak{p}_{\phi}})$. It induces a Galois representation
$$\rho_{\fp_{\phi}}:  G_{\BQ_{\Gamma}}\lra \GL_{2}(O_{L^{\Gamma},\frak{p}_{\Gamma}}).$$
In view of the work of Ribet (\cite{Ri}) and Momose (\cite{Mo}), 
$$
\rho_{\frak{p}_{\phi}}(G_{\BQ_{\Gamma}}) \subseteq H_{\Gamma}:=\bigg{\{}x \in \GL_{2}(O_{L^{\Gamma},\frak{p}_{\Gamma}}) \bigg{|} \det(x) \in \BZ_{p}^{\times} \bigg{\}}
$$
is an open subgroup (for example,  see \cite[B.5.2]{N}). Moreover, the equality holds for all but finitely many primes $\frak{p}$.

We say that
$\rho_{\frak{p}_{\phi}}$ has maximal image if the equality holds in the above inclusion.
Here we only mention that $\rho_{\fp_{\phi}}$ is maximal for all but finitely many primes $\fp$.
If $\rho_{\fp_{\phi}}$ is maximal, then we evidently have
$$
\GL_{2}(\BZ_{p}) \subset \Im(\rho_{\frak{p}_{\phi}}).
$$

For $\ell |N$, let $\pi_\ell $ be the corresponding local representation of $\GL_{2}(\BQ_{\ell})$.
Let $d_\ell\geq 1$ be such that $d_\ell \BZ$ is generated by $[M_\ell: \BQ_\ell]$ where $M_{\ell}$ varies over extensions of $\BQ_{\ell}$ such that the base change of $\pi_{\ell}$ to $M_{\ell}$ is either a spherical or a Steinberg representation of $\GL_{2}(M_{\ell})$. Note that, $d_\ell=1$ if $\ell \| N$.

Let $B$ be a definite quaternion algebra and $\chi\in \fX_K$ a character over an imaginary quadratic field $K$ as in \S2.1. Recall that $f$ is a toric form on $B_{\BA}^\times$ and
$U \subset (B_{\BA}^{(\infty)})^{\times}$
the toric level corresponding to $f$. Let $I$ be an ideal of $\cO_{L_{\phi}}$ given by
$$
I=\big{\langle}\ell+1-a_{\ell}\big{|} {\text{$\ell$ prime}}, \ell \nmid N, \ell \in
\ker(\widehat{\BQ}^{\times} \rightarrow \BQ^{\times}_{+}\backslash \widehat{\BQ}^{\times}/Nrd(U))
\big{\rangle}. 
$$
Here $a_{\ell}$ denotes the $T_{\ell}$-eigenvalue of the newform $\phi$ and $Nrd$ the reduced norm map.

In the rest of this subsection, let $L=L_{\phi,\chi}$. Let $A_\chi=A_K\otimes_{\CO_{L_\phi}} \CO_L$ be the Serre tensor where the absolute Galois group $G_K$ acts on $\CO_L$ via $\chi$. In regards to certain aspects of the arithmetic corresponding the pair $(A,\chi)$, it turns out that $A_{\chi}$ is a relevant abelian variety.

The main result of this subsection regarding the Tate--Shafarevich group of the Serre tensor $A_{\chi}$ is the following.
 \begin{prop} \label{BD}
 Let $\phi \in S_{2}(\Gamma_{0}(N))$ be a non-CM newform.
 Let $K/\BQ$ be an imaginary quadratic extension and $\chi$ an unramified finite order Hecke character over $K$ as above.
 Let $p$ be a prime and $\fp_\phi$ (resp. $\fp$) the prime above $p$ of the coefficient fields $L_\phi$ (resp. $L$) corresponding to the fixed embedding $\iota_p$. Let $A$ be an abelian variety corresponding to $\phi$ such that $\cO_{L_{\phi}} \subset \End(A)$. For $\ell |N$, let $c_{\ell}$ be the corresponding Tamagawa number. Suppose that the following holds.
 \begin{enumerate}
 \item The $p$-adic Galois representation $\rho_{\fp_\phi}$ has maximal image,
 \item $p\nmid 6N \cdot (\prod_\ell c_\ell d_{\ell})\cdot I \cdot h_{K}$ and 
 \item The toric period $P_f(\chi)$ corresponding to the pair $(f,\chi)$ is a $\fp$-unit with $f$ the $p$-primitive toric form on a definite quaternion algebra as in last subsection.
 \end{enumerate}
Then, 
$$\Sha(A_\chi/K)[\fp^\infty]=0 \qquad \text{for $|D_K|\gg 0$}.$$ 
As $p\nmid h_{K}$, we also have
$$
\Sha(A/H_{K})^\chi[\fp^\infty]\cong \Sha(A_\chi/K)[\fp^\infty]=0 \qquad \text{for $|D_K|\gg 0$}.
$$

 \end{prop}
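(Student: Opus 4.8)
The plan is to deduce Proposition \ref{BD} from the Euler system of Bertolini--Darmon as refined by Nekov\'a\v{r}, applied to the twisted abelian variety $A_\chi/K$. The starting input is hypothesis (3): the toric period $P_f(\chi)$, which by the explicit Waldspurger formula of Cai--Shu--Tian controls the central value $L(1,\pi_K\otimes\chi)=L(1,A_\chi/K)$, is a $\fp$-unit. In particular $L(1,A_\chi/K)\neq 0$, so the analytic rank is zero and one expects $A_\chi(K)$ and $\Sha(A_\chi/K)$ to be finite. The Euler system of toric periods (or equivalently Heegner points on a definite quaternion algebra) produces cohomology classes $c(\ell)$ indexed by suitable auxiliary primes $\ell$, whose bottom class is essentially $P_f(\chi)\pmod{\fp^n}$; Kolyvagin's argument as adapted by Bertolini--Darmon and Nekov\'a\v{r} then bounds the $\fp$-part of the relevant Selmer group by the $\fp$-divisibility of this bottom class. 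Since $P_f(\chi)$ is a $\fp$-unit, the bound gives $\Sel_{\fp^\infty}(A_\chi/K)=0$, hence $\Sha(A_\chi/K)[\fp^\infty]=0$ and $A_\chi(K)$ has no $\fp$-torsion coming from the Selmer group.

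First I would set up the precise form of the Euler system divisibility: following \cite{N0} (and the appendix B there on Galois images), for $p\notin\Sigma$ where $\Sigma$ collects the primes dividing $6N$, the Tamagawa factors $c_\ell$, the ``inner twist'' factors $d_\ell$, the congruence ideal $I$, and $h_K$, one has an exact control of the form $\fp^{C_{\chi,\fp}}\cdot\Sha(A_\chi/K)[\fp^\infty]$ annihilated by $P_f(\chi)$ (this is the statement quoted in the introduction). Under the maximal image hypothesis (1) the constant $C_{\chi,\fp}$ is forced to vanish: the reducibility/Galois-image obstructions that normally contribute to $C_{\chi,\fp}$ are precisely controlled by whether $\GL_2(\BZ_p)\subset\Im(\rho_{\fp_\phi})$, which holds by (1), and by the local conditions at $\ell\mid N$, which are harmless since $p\nmid\prod_\ell c_\ell d_\ell$. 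So under (1)--(2) the annihilation statement becomes simply $P_f(\chi)\cdot\Sha(A_\chi/K)[\fp^\infty]=0$, and then (3) (that $P_f(\chi)$ is a $\fp$-unit) immediately gives $\Sha(A_\chi/K)[\fp^\infty]=0$. The role of ``$|D_K|\gg 0$'' is to guarantee $\CO_K^\times=\BZ^\times$ (so the period normalization and the identification $\fX_K$-homogeneous-under-$\Cl_K$ of Lemma \ref{existence1} are valid) and to ensure the auxiliary Kolyvagin primes exist with the required splitting behavior in $K$; this is a standard largeness-of-discriminant hypothesis and I would simply invoke it.

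The second, easier, assertion is the descent from $K$ to $H_K$. Since $A_\chi=A_K\otimes_{\CO_{L_\phi}}\CO_L$ is the Serre tensor with $G_K$ acting on $\CO_L=\CO_{L_\phi}[\chi]$ via $\chi$, there is a canonical isomorphism $\Sha(A_\chi/K)\otimes\BZ[h_K^{-1}]\cong\Sha(A/H_K)^\chi\otimes\BZ[h_K^{-1}]$ as recalled in the introduction (this is the content of \cite{MRS} together with the inflation-restriction sequence for $\Gal(H_K/K)\cong\Cl_K$, whose order is prime to $p$ by hypothesis (ii) $p\nmid h_K$). Inverting $h_K$ is harmless on $\fp^\infty$-parts precisely because $p\nmid h_K$, so $\Sha(A/H_K)^\chi[\fp^\infty]\cong\Sha(A_\chi/K)[\fp^\infty]=0$.

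The main obstacle is pinning down exactly which primes must be excluded so that the Euler system bound has no extraneous constant, i.e.\ verifying $C_{\chi,\fp}=0$ under (1)--(2) uniformly in the pair $(K,\chi)$. This requires a careful bookkeeping of the refinement in \cite{N0}: the local Tamagawa contributions at $\ell\mid N$ (handled by $p\nmid c_\ell$ and $p\nmid d_\ell$, the latter accounting for the behavior of $\pi_\ell$ under base change and for inner twists), the congruence-number contribution (handled by $p\nmid I$, where $I$ is the Eisenstein-type ideal defined via $\ell+1-a_\ell$ over the relevant ray class group of conductor $\mathrm{Nrd}(U)$), and the global Galois-image contribution (handled by maximality of $\rho_{\fp_\phi}$, which via Ribet--Momose gives $\GL_2(\BZ_p)\subset\Im\rho_{\fp_\phi}$ and hence irreducibility and big-image properties of the residual representation needed for Kolyvagin's argument). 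Once these are matched against \cite[App.\ B]{N0} and \cite{BD}, the proposition follows. I expect this matching, rather than any new idea, to be the substance of the proof.
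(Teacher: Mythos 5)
Your overall strategy coincides with the paper's: quote Nekov\'a\v{r}'s refinement of Bertolini--Darmon in the form $\fp^{C_\chi+v_p(P_f(\chi))}\cdot\Sha(A_\chi/K)[\fp^\infty]=0$, show $C_\chi=0$ under (1)--(2), and conclude from (3); the passage to $\Sha(A/H_K)^\chi$ via the Serre tensor and $p\nmid h_K$ is also handled exactly as in the paper. However, the part you defer as ``matching'' and ``bookkeeping'' is where essentially all of the paper's proof lives: the constant decomposes as $C_\chi=3C_2+12C_4+C_5+C_6+C_7+C_8+v_p(I\cdot[H_\chi:K])+v_p([H_\chi:K])$, and while $C_5$, $C_6$ and the $I$- and $h_K$-terms are indeed killed directly by hypothesis (2) as you indicate, the vanishing of $C_2$, $C_4$, $C_7$, $C_8$ requires genuine arguments that your proposal does not supply: Burnside plus Nakayama for $C_4$; the construction of $g_K\in G_\BQ$ with $\rho_{\fp_\phi}(g_K)=\mathrm{diag}(\pm1,\lambda_2)$, $\lambda_2^2-1$ a $p$-adic unit, acting nontrivially on $K$, for $C_2$; the identity $\BZ_p[\GL_2(\BZ_p)^2]=\BZ_p[\GL_2(\BZ_p)]$ for $p>3$ combined with the exponent-two structure of $\Gal(M/\BQ)$ for $M=\BQ(A_\chi[\fp^\infty])\cap H_\chi$, for $C_8$; and an inflation--restriction argument via a nontrivial prime-to-$p$ central subgroup (in the style of Gross) for $C_7$.

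More substantively, you misidentify the role of the hypothesis $|D_K|\gg 0$. It has nothing to do with $\CO_K^\times=\BZ^\times$ or with the existence of auxiliary Kolyvagin primes; it is needed because the vanishing of $C_2$, $C_7$ and $C_8$ all rest on $K$ \emph{not} being a subfield of $\BQ(A_\chi[\fp^\infty])$, and the maximal-image hypothesis guarantees only that finitely many imaginary quadratic fields are contained in that division field. Without this observation your plan as stated yields only that $C_\chi$ is bounded, not that it vanishes, and the conclusion $\Sha(A_\chi/K)[\fp^\infty]=0$ could fail for the finitely many exceptional $K$; this is precisely why the proposition is asserted only for $|D_K|\gg 0$. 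You should replace your stated justification with this one and then carry out the constant-by-constant verification against \cite[\S2]{N}.
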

\begin{proof}
When the toric period $P_{f}(\chi)$ does not vanish, the Bertolini--Darmon method to bound
$\Sha(A_\chi/K)[\fp^\infty]$ shows that
$$
\fp^{C_{\chi}+v_{p}(P_{f}(\chi))}\cdot \Sha(A_\chi/K)[\fp^\infty]=0
$$
for a non-negative integer $C_{\chi}$ (\cite[2.9.6]{N}).
The constant $C_{\chi}$ possibly depends on $p$.

Under the hypotheses (1) and (2), we show that
$$
C_{\chi}=0
$$
for all $\chi \in \fX_{K}$ as $|D_{K}|\gg 0$.

In the following exposition, we suppose some familiarity with \cite[\S2]{N0}.

In view of the description of $C_{\chi}$, we consider the following.
\begin{itemize}
\item[(i)] As the Galois representation $\rho_{A, \fp_\phi}$ has maximal image, 
only finitely many imaginary quadratic extensions are contained in $\BQ(A[\mathfrak{p}^\infty])$. 
Let $K$ be an imaginary quadratic field with $|D_{K}| \gg 0$. 

For $p \geq 5$, let $\matrixx{\lambda_{1}}{}{}{\lambda_{2}} \in \GL_{2}(\BZ_{p})$ such that
$\lambda_{1}=\pm 1$ and $\lambda_{2}^{2}-1$ is a $p$-adic unit. Since the image is maximal, there exists
$g_{K} \in G_{\BQ}$ with trivial action on $\BQ_{\chi}$ such that 
\begin{itemize}
\item $\rho_{\fp_{\phi}}(g_{K})=\matrixx{\lambda_{1}}{}{}{\lambda_{2}}$.
\item $g_{K}$ does not act trivially on $K$. 
\end{itemize}

In the notation of \cite[\S2]{N}, we thus have $C_{2}=0$ for $|D_{K}| \gg 0$.

\item[(ii)] We now show that
$$
\Im(O_{L,\mathfrak{p}}[G_{\BQ}]\rightarrow \End_{O_{L,\mathfrak{p}}}(A_{\chi}[\fp^{\infty}]))=
M_{2}(O_{L,\mathfrak{p}}).
$$

As $A_{\chi}[\fp]$ is absolutely irreducible $G_{\BQ}$-module, by Burnside's lemma 
$$
\Im(O_{L,\mathfrak{p}}[G_{\BQ}]\rightarrow \End_{O_{L,\mathfrak{p}}}(A_{\chi}[\fp]))=
\End_{O_{L,\mathfrak{p}}}(A_{\chi}[\fp]). 
$$
The assertion thus readily follows from Nakayama lemma. 

In the notation of \cite[\S2]{N}, we thus have $C_{4}=0$.

\item[(iii)] We show that
$$
\Im(O_{L,\mathfrak{p}}[G_{H_{\chi}}]\rightarrow \End_{O_{L,\mathfrak{p}}}(A_{\chi}[\fp^{\infty}]))=
M_{2}(O_{L,\mathfrak{p}})
$$
for $p>3$ and $|D_{K}|\gg 0$. 
Here $H_{\chi}$ denotes the extension of $K$ cut out by the finite order Hecke character $\chi$ over $K$. 

In fact, we show the following. Suppose that $K$ is not a subfield of $\BQ(A_{\chi}[\mathfrak{p}^{\infty}])$.
Then,
$$
O_{L,\mathfrak{p}}[\rho_{\mathfrak{p}}(G_{H_{\chi}})]=M_{2}(O_{L,\mathfrak{p}}).
$$

Let $M$ denote the intersection of $\BQ(A_{\chi}[\mathfrak{p}^{\infty}])$ and $H_{\chi}$.
As $K$ is not a subfield of $\BQ(A_{\chi}[\mathfrak{p}^{\infty}])$, the extension $M/\BQ$ is abelian.
Moreover, the corresponding Galois group has exponent $2$ as $\Gal(M/\BQ)\simeq \Gal(MK/K)$
and the latter extension is both cyclotomic and anticyclotomic. It follows that
$$
O_{L,\mathfrak{p}}[H_{\Gamma}^{2}] \subset O_{L,\mathfrak{p}}[\rho_{\mathfrak{p}}(G_{H_{\chi}})].
$$
Here $H_{\Gamma}$ is as in the description of maximal image. 
Here and in what follows, for a subgroup $H' \subset \GL_{2}(O_{L,\fp})$, let $H'^{2}$ denotes the subgroup generated by the square of elements in $H'$.

For $p>3$, note that $\BZ_{p}[\GL_{2}(\BZ_{p})^{2}]=\BZ_{p}[\GL_{2}(\BZ_{p})]$. As $\GL_{2}(\BZ_p)\subset H_{\Gamma}$, we conclude that
$$
M_{2}(O_{L,\mathfrak{p}})=O_{L,\mathfrak{p}}[\GL_{2}(\BZ_{p})^{2}]\subset O_{L,\mathfrak{p}}[\rho_{\mathfrak{p}}(G_{H_{\chi}})].
$$

In the notation of \cite[\S2]{N}, we thus have $C_{8}=0$ for $|D_{K}| \gg 0$.
\item[(iv)] 
Let $m$ be a sufficiently large positive integer and $n=m+C_{0}$ for the constant $C_{0}$ in \cite[2.7.9]{N}. Let $H_{n,\chi}=\BQ(T/\mathfrak{p}^{n})H_{\chi}$ for $T=A_{\chi}[\fp^{\infty}]$.

Let $g_{n}\in\Gal(H_{n,\chi}/\BQ)$ be the restriction of $g_{K}$ to $H_{n,\chi}$. 
Here $g_{K}$ is an element as in the paragraph preceeding the vanishing of $C_{2}$.
Let $H_{n,\chi}'$ be the subfield of $H_{n,\chi}$ fixed by $\langle g_{n}^{2}\rangle$.
Let $\Res:H^{1}(H_{\chi},T/\mathfrak{p}^{m})\rightarrow H^{1}(H_{n,\chi},T/\mathfrak{p}^{m})$
and
$\Res':H^{1}(H_{\chi},T/\mathfrak{p}^{m})\rightarrow H^{1}(H_{n,\chi}',T/\mathfrak{p}^{m})$
be the restriction maps.
We show that
$$
\ker(\Res')=\ker(\Res)=0
$$
for $p>3$ and $|D_{K}|\gg 0$.

This is based on the proof of \cite[Prop. 9.1]{G}.
By inflation--restriction, we have
$$
\ker(\Res)=H^{1}(\Gal(H_{n,\chi}/H_{\chi}),A_{\chi}[\mathfrak{p}^m]).
$$
The Galois action gives rise to an embedding $\rho:\Gal(H_{n,\chi}/H_{\chi})\hookrightarrow \GL(A_{\chi}[\mathfrak{p}^n])$. Suppose that $K$ is not a subfield of $\BQ(A_{\chi}[\mathfrak{p}^{\infty}])$ and let $M$ be as in the discussion preceding the vanishing of $C_8$.
As $\Gal(M/\BQ)$ is abelian with exponent two,
$\rho((\Gal(\BQ(A_{\chi}[\mathfrak{p}^{n}])/\BQ))^2)$ is a subgroup of $\rho(\Gal(H_{n,\chi}/H_{\chi}))$, and therefore
$$
Z_{0} \subset Z(\Im(\rho)) \subset \Im(\rho),
$$
where $Z_0$ is the image of $(\BF_{p}^\times)^{2}$ under the Teichmuller lifting. The existence follows from the maximality of the image.
As $p>3$, the subgroup $Z_{0}$ is non-trivial with order prime to $p$. It follows that
$$
H^{i}(Z_{0},A_{\chi}[\mathfrak{p}^{m}])=0
$$
for $i \geq 0$. The spectral sequence
$$
H^{i}(\Gal(H_{n,\chi}/H_{\chi})/\rho^{-1}(Z_{0}), H^{j}(Z_{0},A_{\chi}[\mathfrak{p}^{m}]))
\implies H^{i+j}(\Gal(H_{n,\chi}/H_{\chi}), A_{\chi}[\mathfrak{p}^{m}]))
$$
thus implies the desired vanishing of $\ker(\Res)$. 

Recall that the elements $g_{n}^{2}$ does not belong to the center $Z(\Im(\rho))$. Going modulo $\langle g_{n}^2 \rangle$, the above argument thus also applies for $\ker(\Res')$.

In the notation of \cite[\S2]{N}, we thus have $C_{7}=0$ for $|D_{K}| \gg 0$. 

\item[(v).] In view of our hypothesis $p \nmid \prod_{\ell} c_{\ell}$ and the description of Tamagawa numbers in \cite[Prop. 3]{Kh}, we have $C_{6}=0$ in the notation of \cite[\S2]{N}. 

\item[(vi).] In view of our hypothesis $p \nmid \prod_{\ell} d_{\ell}$, we have $C_{5}=0$
in the notation of \cite[\S2]{N}. 
\end{itemize}

We now recall 
$$
C_{\chi}=
3C_{2}+12C_{4}+C_{5}+C_{6}+C_{7}+C_{8}+v_{p}(I \cdot [H_{\chi}:K])+v_{p}([H_{\chi}:K])
$$
for all $\chi \in \fX_{K}$ (\cite[2.9.6]{N}). 
As $p \nmid h_{K}$, we thus conclude 
$$
C_{\chi}=0
$$
for all $\chi \in \fX_{K}$ such that $|D_{K}| \gg 0$ (\cite[2.9.6]{N}).
This finishes the proof for $\Sha(A_{\chi})$.

In view of \cite{MRS}, the rest follows for $\Sha(A/H_{K})$ from the result for the Serre tensor $A_{\chi}$.
\end{proof}
\begin{remark} Based on an analysis of the Euler system method for toric periods, 
it may be checked that there exists a constant $C$ such that
$$
\# \Sha(A_{\chi})[\fp^{\infty}] \bigg{|} p^{C+v_{p}(P_{f}(\chi))}
$$
for all $\chi \in \fX_{K}$ as $|D_{K}|\gg 0$.
In this sense, the toric periods control the size of the Tate--Shafarevich groups.
\end{remark}

\subsection{Analytic Sha I} 
In this subsection, we describe bounds for analytic Sha in the analytic rank zero case in terms of a toric period based on an analysis of explicit Waldspurger formula due to Cai--Shu--Tian (\cite{CST}).
The consideration along with the main result in \S2.3 leads us to establish the $p$-part of corresponding Birch and Swinnerton-Dyer conjecture for a class of primes $p$.

We first introduce a definition. 

\begin{defn}
For an abelian variety $A$ over a number field $L$, we refer to the ideal $\CL(A)$ in the formula part of the BSD conjecture for $A$ over $L$ (Conjecture \ref{BSD}) as the analytic Sha. 
\end{defn}

We now introduce our setup. Let the notation and assumptions be as in \S2.3. In particular, $A$ denotes a $\GL_2$-type abelian variety over the rationals corresponding to a weight two newform $\phi \in S_{2}(\Gamma_{0}(N))$ and $\chi$ a finite order Hecke character over an imaginary quadratic field $K$ such that
$$
\epsilon(A,\chi)=1
$$
((RN)).
 Moreover,
$A_{\chi}$ denotes the Serre tensor of $A$ by $\chi$ which is an abelian variety over $K$ with $\cO_{L} \subset \End(A_{\chi})$.

As above, we refer to $\CL(A_{\chi})$ as the analytic Sha of $A_\chi$. 

In this subsection, we consider the conjecture in the analytic rank zero case.
We thus suppose that
 $$L(1, A_\chi)\neq 0.$$

To introduce an explicit Waldspurger formula, we begin with some notation. 
Let $B$ be a definite quaternion algebra as in \S2.1. 
Let $R \subset B$ be an order as in \cite[\S1.1]{CST}. 
Recall that $f$ is a toric form on $B_{\BA}^\times$ and
$U \subset (B_{\BA}^{(\infty)})^{\times}$
the level corresponding to $f$. 
Let $X_U$ be the corresponding Shimura set. 
Let $N=N^{+}N^{-}$ for $N^{+}$ (resp. $N^{-}$) precisely divisible by split (resp. non-split) primes in the extension $K/\BQ$. 
Recall that $S$ denotes the set of prime divisors of $N\infty$.

For any $f_i = \sum_x f_i(x) \cdot x \in \BC[X_{U}]$ with $i=1,2$, we put
   \[ \langle f_1,f_2\rangle = \sum_x w_x^{-1}  f_1(x)f_2(x \tau_B). \]
   Here $w_x$ is the cardinality of $(B^\times \cap g \wh{R}^\times g^{-1})/\{ \pm 1\}$
   with $g\in \wh{B}^\times$ any representative of $x$ and $\tau_B \in N_{\wh{B}^\times}(\wh{R}^\times)$
   such that
   \begin{itemize}
	   \item for $\ell|N^+$,
		 $\tau_{B,\ell} \in N_{B_\ell^\times}(R_\ell^\times) \setminus \BQ_\ell^\times R_\ell^\times$ the Atkin--Lehner operator;
	   \item for $\ell | N^-$, $\tau_{B,\ell} t_\ell = \ov{t_\ell} \tau_{B,\ell}$ for any $t_\ell \in K_\ell^\times$ where
	   $\ov{t_\ell}$ is the Galois conjugate of $t_\ell$.
   	   \item for $\ell \nmid N$, $\tau_{B,\ell} = 1$.
   \end{itemize}
    We consider Hermitian pairing $(\ ,\ )_{\widehat{R}^\times}$ on the space $\BC[X_{U}]$ of $\BC$-valued functions on the Shimura set $X_U$  is given by
    \[ (f_1,f_2)_{\widehat{R}^\times} = \sum_x w_x^{-1}  f_1(x)\ov{f_2(x)} \]
with $\overline{\cdot}$ being the complex conjugaton.

From \cite[Thm. 1.8]{CST}, we have an explicit Waldspurger formula given by 
$$L(1, A_\chi)=2^{-\# \Sigma_D+2}\cdot \frac{8\pi^2 (\phi,\phi)_{\Gamma_0(N)}}{[\CO_{K}^\times :\BZ^\times]^2\sqrt{|D_K|^2}}\cdot \frac{P_{f_{1}}(\chi) P_{f_{2}} (\chi^{-1})}{(f_1, f_2)_{\wh{R}^\times}}.$$ 
Here $\Sigma_{D}=\big{\{}\ell \big{|} \ell | (N,D_{K})\big{\}}$, $f_{1} \in V(\pi_{B},\chi)$ and $f_{2} \in V(\pi_{B},\chi^{-1})$ are non-zero vectors. For the definition of $V(\pi_{B},\chi)$, we refer to \cite[Def. 1.7]{CST}. 
Here we only mention that $f_1$ can be taken to be the toric form $f$ as in \S2.1.



In the analytic rank zero case, the analytic Sha of $A_\chi$ is thus given by
$$\CL(A_\chi)=
\frac{\pi^2 (\phi,\phi)_{\Gamma_0(N)}}{\sqrt{|D_K|^2}\Omega(A_\chi/K)}\cdot \frac{P_{f_{1}}(\chi) P_{f_{2}} (\chi^{-1})}{(f_1, f_2)_{\wh{R}^\times}}\cdot \frac{\Fitt (A_\chi(K)_\tor) \Fitt (A^\vee_{\chi^{-1}}(K)_\tor)}{2^{\# \Sigma_D -5}\cdot [\CO_K^\times :\BZ^\times]^2\cdot\prod_{v}\Fitt(c_v(A_\chi))}.
$$

 We have the following key proposition regarding the variation of the $\mathfrak{p}$-part of analytic Sha in terms of toric periods.

\begin{prop}\label{control0}
Let $\phi \in S_{2}(\Gamma_{0}(N))$ be a non-CM newform and $L_{\phi}$ the corresponding Hecke field.
 Let $A$ be an abelian variety corresponding to $\phi$ such that $\cO_{\phi} \subset \End(A)$.

There exists a finite set $\Sigma_{A}$ of primes of $\CO_\phi$ only dependent on $A$,  such that for any prime $\fp_0\notin \Sigma_{A}$ of $\CO_\phi$ and any $\chi\in \fX$ with $L(1, A_\chi)\neq 0$, the ideal
$$\displaystyle{\frac{\CL(A_\chi)}{P_f(\chi)^2}}$$
is coprime to $\fp_0$.
Here $f$ is the $\chi_{0, S}$-toric vector as in last subsection.
 In particular, if $P_f(\chi)$ is a $\fp$-unit, then $\CL(A_\chi)$ is also a $\fp$-unit as long $\fp$ lies above some $\fp_{0} \notin \Sigma_{A}$.
\end{prop}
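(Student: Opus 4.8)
The plan is to feed the explicit Waldspurger formula for $\CL(A_\chi)$ displayed above (valid since $L(1,A_\chi)\neq0$) into a valuation count. Fix a prime $\fp$ of $L=L_{\phi,\chi}$ over a prime $\fp_0$ of $\cO_\phi$. The right side of that formula is a product of an \emph{archimedean} piece $\pi^2(\phi,\phi)_{\Gamma_0(N)}/(\sqrt{|D_K|^2}\,\Omega(A_\chi/K))$, a \emph{toric} piece $P_{f_1}(\chi)P_{f_2}(\chi^{-1})/(f_1,f_2)_{\wh R^\times}$, and an \emph{arithmetic} piece built from the torsion Fitting ideals, the Tamagawa Fitting ideals, the power of $2$ and the unit index $[\CO_K^\times:\BZ^\times]^2$. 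The strategy is to produce a finite set $\Sigma_A$ of primes of $\cO_\phi$, depending only on $A$, such that for $\fp_0\notin\Sigma_A$ the archimedean and arithmetic pieces are $\fp_0$-units while the toric piece equals $P_f(\chi)^2$ up to a $\fp_0$-unit, all \emph{uniformly} in $(K,\chi)$ with $\chi\in\fX$. Comparing $\fp$-valuations then gives $v_\fp(\CL(A_\chi))=2\,v_\fp(P_f(\chi))$ for every $\fp\mid\fp_0$, which is the assertion, and the ``in particular'' clause follows at once.

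The arithmetic piece is the most elementary. As $p$ is odd and $K\neq\BQ(i),\BQ(\omega)$ (Definition \ref{Theta}), once $2\notin\fp_0$ the power of $2$ and the index $[\CO_K^\times:\BZ^\times]^2$ are $\fp_0$-units. For the Tamagawa factor, $\chi$ has conductor one and $A_K$ has good reduction outside $N$, so by the N\'eron--Ogg--Shafarevich criterion $A_\chi$ also has good reduction at every place of $K$ not above $N$; hence $\prod_v\Fitt(c_v(A_\chi))=\prod_{v\mid N}\Fitt(c_v(A_\chi))$ runs over the fixed finite set of places above $N$, and its $\fp_0$-part is bounded in terms of the local representations $\pi_\ell$ and the integers $c_\ell,d_\ell$ of \S2.3, so I put the prime divisors of $\prod_{\ell\mid N}c_\ell d_\ell$ into $\Sigma_A$. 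For the torsion factors I enlarge $\Sigma_A$ by the finitely many primes at which $\rho_{\fp_\phi}$ does not have maximal image; for the remaining $\fp_0$, $A[\fp]$ is absolutely irreducible over $G_\BQ$, so $A_\chi[\fp]=A[\fp]\otimes\chi$ has no nonzero $G_K$-fixed vector (such a vector would make $\ov\rho_{\phi,\fp_0}|_{G_K}$ reducible), and similarly for the dual, whence $A_\chi(K)_\tor$ and $A^\vee_{\chi^{-1}}(K)_\tor$ have trivial $\fp$-part.

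The main obstacle is the archimedean piece, together with the comparison between the adelic period $P_{f_1}(\chi)$ appearing in \cite{CST} and the normalised period $P_f(\chi)$ of \S2.1 --- both a priori vary with $K$. Here I plan a chain of period comparisons. Since $A_\chi$ is the Serre tensor of $A_K$ by the finite character $\chi$, over $\BC$ the twist becomes trivial and $\Omega(A_\chi/K)$ reduces to a fixed power of the complex period of $A$, times a discriminant contribution from the N\'eron model of $A_\chi$ over $\cO_K$ versus over $\BZ$, times a rational number whose prime divisors depend only on $A$; one then checks that this discriminant contribution, the factor $\sqrt{|D_K|^2}$ in the formula, and the $|D_K|$-part of the measure relating $P_{f_1}(\chi)$ to $P_f(\chi)$ all cancel, the $h_K$-part of that measure being harmless since $p\nmid h_K$. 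Next, the complex period of $A$ is compared with $\Omega^{+}(A/\BQ)\Omega^{-}(A/\BQ)$, and the latter with the Petersson norm $(\phi,\phi)_{\Gamma_0(N)}$ via the modular parametrisation: up to powers of $\pi$ and $2$ this ratio is governed by the Manin constant and the congruence number of $\phi$, with finitely many prime divisors, again depending only on $A$. Putting all these primes into $\Sigma_A$ makes the archimedean piece a $\fp_0$-unit and rewrites the toric piece through the normalised periods. The bookkeeping here --- pinning down the exact $D_K$-power in $\Omega(A_\chi/K)$ and the bounded-denominator statements for the two period ratios, uniformly in $(K,\chi)$ --- is the delicate part, and the place where $\Sigma_A$ is actually determined.

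It remains to treat the now-normalised toric factor. Here I would use the explicit test vectors of \cite{CST}: take $f_1=f$, the $\fp$-primitive toric form of Lemma \ref{TV0}, and $f_2$ its natural dual in $V(\pi_B,\chi^{-1})$, so that $P_{f_2}(\chi^{-1})$ and the Hermitian pairing $(f_1,f_2)_{\wh R^\times}$ get written in terms of $f$, its $\tau_B$-translate and the weights $w_x$; since $w_x$ has only $2$ and $3$ as prime factors it is prime to $p$ for $p>3$. Combined with the $\fp$-primitivity of $f$ and the absolute irreducibility of $\ov\rho_{\phi,\fp_0}$, which forces the relevant residual congruence factor of $\phi$ to be a $\fp_0$-unit (its finitely many bad primes also going into $\Sigma_A$), this gives $P_{f_1}(\chi)P_{f_2}(\chi^{-1})/(f_1,f_2)_{\wh R^\times}=u\cdot P_f(\chi)^2$ with $u$ a $\fp_0$-unit. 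Substituting the three bounds into the Waldspurger formula and reading off $\fp$-valuations yields $v_\fp(\CL(A_\chi))=2\,v_\fp(P_f(\chi))$ for all $\fp\mid\fp_0$, i.e.\ $\CL(A_\chi)/P_f(\chi)^2$ is coprime to $\fp_0$.
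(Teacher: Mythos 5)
Your proposal follows essentially the same route as the paper: feed the Cai--Shu--Tian explicit (resp.\ $S$-version) Waldspurger formula into a valuation count, show the archimedean/period, Tamagawa, torsion and local-pairing factors have prime support bounded by a finite set depending only on $A$ (using $\Omega(A_\chi/K)=\Omega(A/K)$ up to $N$-ideals and the fixedness of the local $\beta^0$-ratios for a fixed $S$-type), and identify $P_{f_1}(\chi)P_{f_2}(\chi^{-1})$ with $P_f(\chi)^2$ via the $\tau_B$/$J$-translate. The only real divergence is your treatment of the torsion Fitting ideals via maximal Galois image and irreducibility of $A_\chi[\fp]$, where the paper instead invokes the uniform finiteness of $\bigcup_K A(H_K)_{\tor}$ (Proposition~\ref{torsion}); both yield a finite exceptional set depending only on $A$.
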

\begin{proof} 
Let $f_{1}'=f$ be a $\mathfrak{p}$-minimal toric test vector satisfying (F1) and (F2) in \S2.1. Let $J \in B$ be as in \cite[\S2.1]{CH} and $f_{2}'$ the $J$-translate of $f_{1}'$ given by 
$$
f_{2}'(x)=f_{1}'(xJ).
$$
Locally, $f_{2}'$ differs from $f_{1}'$ only at the places dividing $N$. 

Note that 
$$
P_{f_{2}'}(\chi^{-1})=P_{f_{1}'}(\chi).
$$

We now recall the $S$-version of Waldspurger formula in \cite[Thm. 1.9]{CST}.
It states that
$$
L (1, \phi, \chi)=2^{-\#\Sigma_D+2}\cdot C_\infty \cdot \frac{\pair{\phi^0, \ov\phi^0}_{U_0(N)}}{[\cO_{K}^{\times}:\BZ^{\times}]^2 \sqrt{|D_K| }}\cdot \frac{P_{f_{1}'}(\chi) P_{f_{2}'}(\chi^{-1})}
{\pair{f'_1, f'_2}_{\wh{R}^\times}}\cdot \prod_{v\in S\backslash \{\infty\}} \frac{\beta^0(f_{1, v}, f_{2, v})}{ \beta^0(f'_{1, v}, f'_{2, v})}.
$$
Here $\phi^{0}$ is a normalised new vector fixed by level $U_{1}(N)$ and the pairing $\langle \cdot , \cdot \rangle_{U_{0}(N)}$ is as in \cite[\S1.3]{CST}.  
Further, $f_{1}=f \in V(\pi_{B},\chi)$ and $f_{2} \in V(\pi_{B},\chi^{-1})$ are non-zero vectors, the local pairing $\beta^{0}(\cdot, \cdot)$ is as in \cite[Thm. 1.6]{CST} 
and $C_{\infty}$ denotes the constant in \cite[Thm. 1.8]{CST}.

It follows that the analytic Sha is given by
$$
\CL(A_\chi)=
\frac{C_{\infty} \cdot (\phi,\phi)_{\Gamma_0(N)}}{\sqrt{|D_K|^2}\Omega(A_\chi/K)}\cdot \frac{P_f(\chi)^{2}}{(f_1, f_2)_{\wh{R}^\times}}\cdot \frac{\Fitt (A_\chi(K)_\tor) \Fitt (A^\vee_{\chi^{-1}}(K)_\tor)}{2^{\# \Sigma_D -5}[\cO_{K}^{\times}:\BZ^{\times}]^{2}\cdot\prod_{v}\Fitt(c_v(A_\chi))}\cdot 
\prod_{v\in S\backslash\{\infty\}} \frac{\beta^0(f_{1, v}, f_{2, v})}{ \beta^0(f'_{1, v}, f'_{1, J, v})}.
$$

We now discuss horizontal variation of the terms in the right hand side of the above expression except the toric period. It suffices to show that the terms have bounded prime divisors as the pair $(K,\chi)$-varies.

By strong approximation, we can choose
$$
J \in (\prod_{v|N}B_{v}^{\times}) \cdot \GL_{2}(\widehat{\BZ}^{(N)}) \cap B^{\times} .
$$
Since the $S$-type of $K$ is fixed, it thus follows that
$$
\prod_{v\in S\backslash\{\infty\}} \frac{\beta^0(f_{1, v}, f_{2, v})}{ \beta^0(f'_{1, v}, f'_{1, J, v})}
$$
is a fixed rational number as $K$ varies.

We have
$$
\Omega(A_{\chi}/K)=\Omega(A/K)
$$
as fractional ideals up to ideals dividing the level $N$ (\cite[\S 4.4.1]{BCW}).

In view of the definition of the period $\Omega(A/K)$ and the description of the Tamagawa numbers for the Serre tensor $A_{\chi}$, it follows that there exists an nonzero ideal  $\fa_1$ such that
$$\fa_1 \subseteq \frac{C_{\infty} \cdot (\phi,\phi)_{\Gamma_0(N)}}{\sqrt{|D_K|^2}\Omega(A_\chi/K)}\cdot \prod_{v}\Fitt(c_v(A_\chi)).$$
Moreover, the support of prime divisors of $\mathfrak{a}_1$ is bounded as $K$ varies.

From Proposition \ref{torsion}, there exists an nonzero ideal  $\fa_2$ such that
$$
\fa_2 \subseteq \Fitt (A_\chi(K)_\tor) \Fitt (A^\vee_{\chi^{-1}}(K)_\tor)
$$
with support of prime divisors bounded as $K$ varies.

In view of the earlier expression for the analytic Sha,
this finishes the proof.
\end{proof}

The following finiteness is used in the proof. 
\begin{prop}
\label{torsion} 
Let $A$ be a non-CM $\GL_2$-type abelian variety over the rationals. 
Then, the set
$\displaystyle{\bigcup_{K}  A(H_{K})_\tor}$ is finite as $K$ varies imaginary quadratic fields (\cite[Prop. 3.13]{BT}).
\end{prop}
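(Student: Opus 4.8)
We indicate the argument; the statement is \cite[Prop.~3.13]{BT}. Since $A$ is a non-CM $\GL_2$-type abelian variety over $\BQ$ it is modular, say attached to a newform $\phi$ with Hecke field $L_\phi$, so that for each rational prime $\ell$ and each prime $\fp\mid\ell$ of $\cO_{L_\phi}$ we have the $\fp$-adic Galois representation $\rho_\fp\colon G_\BQ\to\GL_2(\cO_{L_\phi,\fp})$ attached to $A$. The first reduction is that $H_K$ is an everywhere unramified abelian extension of $K$, so $H_K\subset K^{\ab}$; it therefore suffices to bound $A(K^{\ab})_\tor$ by a constant independent of $K$, since then $\bigcup_K A(H_K)_\tor\subset\bigcup_K A(K^{\ab})_\tor$ is a subset of the torsion group $A(\ov\BQ)_\tor$ killed by a fixed integer, hence finite. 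The plan is to show (a) that $A(K^{\ab})[\ell]=0$ for all $K$ once $\ell$ lies outside a finite set $T$ depending only on $A$, and (b) that for each of the finitely many $\ell\in T$ the order of $A(K^{\ab})[\ell^\infty]$ is bounded independently of $K$.

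For (a) I would enlarge $T$ so that for $\ell\notin T$ the representation $\rho_\fp$ has maximal image (recall from \S2.3 that this holds for all but finitely many $\fp$, and then $\GL_2(\BZ_\ell)\subset\Im(\rho_\fp)$, so $\bar\rho_\fp(G_\BQ)$ contains $\SL_2(\BF_\fp)$), and so that $\ell$ is large enough that every proper subgroup of $\SL_2(\BF_\fp)$ has index $>2$ (this uses simplicity of $\PSL_2$). Since $[K:\BQ]=2$ we have $[\bar\rho_\fp(G_\BQ):\bar\rho_\fp(G_K)]\le 2$, hence $\bar\rho_\fp(G_K)\supset\SL_2(\BF_\fp)$ still; as $\Gal(K^{\ab}/K)$ is abelian, $\rho_\fp(G_{K^{\ab}})$ contains the closure of $[\rho_\fp(G_K),\rho_\fp(G_K)]$, so its reduction contains $[\SL_2(\BF_\fp),\SL_2(\BF_\fp)]=\SL_2(\BF_\fp)$, a group with no nonzero fixed vector on $A[\fp]$. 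Thus $A(K^{\ab})[\fp]=0$ for all such $\fp$, whence $A(K^{\ab})[\ell]=0$.

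For (b) I would fix $\ell\in T$ and use the Ribet--Momose description recalled in \S2.3: after restriction to the fixed finite abelian extension $\BQ_\Gamma/\BQ$ the image $\rho_\fp(G_{\BQ_\Gamma})$ is open in $H_\Gamma\subset\GL_2(\cO_{L^\Gamma,\fp_\Gamma})$ with index bounded by a constant $c=c(A,\ell)$. Since $[K\BQ_\Gamma:\BQ]$ is bounded, $\rho_\fp(G_{K\BQ_\Gamma})$ is open in $H_\Gamma$ of index $\le c'=c'(A,\ell)$, and as $K\BQ_\Gamma/K$ is abelian the closure $V$ of $[\rho_\fp(G_K),\rho_\fp(G_K)]\supseteq[\rho_\fp(G_{K\BQ_\Gamma}),\rho_\fp(G_{K\BQ_\Gamma})]$ is, by the $\ell$-adic Lie theory of $H_\Gamma$ (whose derived Lie algebra is $\mathfrak{sl}_2$), an open subgroup of $\ov{[H_\Gamma,H_\Gamma]}$, itself an open subgroup of $\SL_2(\cO_{L^\Gamma,\fp_\Gamma})$; the index of $V$ in $\SL_2(\cO_{L^\Gamma,\fp_\Gamma})$ is bounded in terms of $c'$ alone. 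Now $\rho_\fp(G_{K^{\ab}})\supseteq V$, and if $P\in A[\fp^m]$ has order $\fp^m$ then its stabiliser in $\SL_2(\cO_{L^\Gamma,\fp_\Gamma}/\fp_\Gamma^m)$ is conjugate to the unipotent upper-triangular subgroup, of index $\asymp|\BF_\fp|^{2m}$; hence $V$ can fix $P$ only if $|\BF_\fp|^{2m}$ is bounded in terms of $c'$, so $A(K^{\ab})[\fp^\infty]$ has order bounded in terms of $c'$, independently of $K$. Summing over $\fp\mid\ell$ and over $\ell\in T$ and combining with (a) yields the finiteness.

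The hard part is exactly the uniformity in $K$ in step (b): one needs that passing from $G_\BQ$ to $G_K$ and then to $G_{K^{\ab}}$ shrinks the image only by a $K$-independent amount, which is why it is essential both that $[K:\BQ]=2$ is fixed, so the first passage costs index $\le 2$, and that $A$ is non-CM over $\BQ$, so that the Serre--Ribet openness applies to $\rho_\fp(G_\BQ)$ itself with $K$-independent index. One could instead invoke Ribet's theorem on torsion of abelian varieties in abelian extensions, applied to $A\times_\BQ K$ over $K$, to get finiteness of $A(K^{\ab})_\tor$ for each individual $K$; but extracting a bound uniform in $K$ from that amounts to the same bookkeeping, and it is cleaner to argue directly over $K^{\ab}$, where $\Gal(K^{\ab}/K)$ is already large enough that the local structure of $A$ at $\ell$ — connected--\'etale or Hodge--Tate — never enters.
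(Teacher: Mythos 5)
Your argument is correct in its essentials, but note that the paper itself offers no proof of this proposition: it is stated as a quotation of \cite[Prop.~3.13]{BT}, so there is nothing internal to compare against. What you have written is a self-contained proof along the standard ``big Galois image'' lines, and the skeleton is sound: the reduction $H_K\subset K^{\ab}$, the elimination of residual torsion for $\ell\notin T$ via perfectness of $\SL_2(\BF_q)$ (so that passing to the index-$\le 2$ subgroup $\bar\rho_\fp(G_K)$ and then to commutators costs nothing), and the uniform bound on $\fp^\infty$-torsion for the remaining $\ell$ via openness of $\overline{[U,U]}$ in $\SL_2(\cO_{L^\Gamma,\fp_\Gamma})$ for the finitely many open $U\subseteq H_\Gamma$ of index $\le c'$. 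The uniformity in $K$ is correctly located in the two facts you isolate: $[K:\BQ]=2$ is fixed, and the Ribet--Momose openness is a statement about $G_{\BQ_\Gamma}$, independent of $K$.

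Two points of bookkeeping deserve attention, neither fatal. First, in step (b) the module $A[\fp^m]$ is free of rank $2$ over $\cO_{L_\phi,\fp}/\fp^m$, while your group $V$ acts through $\SL_2(\cO_{L^\Gamma,\fp_\Gamma})$; the stabiliser computation should therefore be run with unipotents $\bigl(\begin{smallmatrix}1&u\\0&1\end{smallmatrix}\bigr)$, $u\in\fp_\Gamma^{k}$, acting on a pair $(a,b)\in(\cO_{L_\phi,\fp}/\fp^m)^2$: fixing forces $a,b\in\fp^{m-ke}$ with $e$ the ramification index of $\fp$ over $\fp_\Gamma$, so $A(K^{\ab})[\fp^\infty]\subseteq A[\fp^{ke}]$ with $k$ bounded in terms of $c'$. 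The index count you give for $\SL_2(\cO_{L^\Gamma,\fp_\Gamma}/\fp_\Gamma^m)$ is not quite the right group, but the conclusion is the same. Second, the claim that the index of $V$ is ``bounded in terms of $c'$ alone'' should be justified by observing that $H_\Gamma$ is topologically finitely generated, hence has only finitely many open subgroups of index $\le c'$, each with open derived group; you gesture at this but do not say it. With these repairs the proof is complete.
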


Along with Proposition \ref{BD}, we have the following immediate consequence for the BSD.
\begin{cor}\label{BSD0}
Let $\phi \in S_{2}(\Gamma_{0}(N))$ be a non-CM newform and $L_{\phi}$ the corresponding Hecke field.
 Let $A$ be an abelian variety corresponding to $\phi$ such that $\cO_{\phi} \subset \End(A)$.

There exists a finite set $\Sigma_{A}'$ of primes of $\CO_\phi$ only dependent on $A$,  such that the following holds. For any prime $\fp_0\notin \Sigma_{A}'$ of $\CO_\phi$ and for $\chi\in \fX$ with
the toric period $P_{f}(\chi)$ being a $\fp$-unit for $\fp$ above $\fp_{0}$, the $\fp$-part of BSD holds for the Serre tensor $A_{\chi}$ as long as $p \nmid h_{K}$ for $(p)=\fp_{0} \cap \BQ$.

\end{cor}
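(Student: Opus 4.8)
The plan is to combine the two main results of \S2.3 and \S2.4, namely Proposition \ref{BD} (Euler system bound on $\Sha$) and Proposition \ref{control0} (control of the fudge ideal $D_\chi = \CL(A_\chi)/P_f(\chi)^2$), in order to verify each ingredient of BSD$_\fp$ for $A_\chi$ over $K$: the rank equality, finiteness of the $\fp$-part of $\Sha(A_\chi/K)$, and the precise equality $\CL(A_\chi)_\fp = \Fitt_{\cO_{L,\fp}}(\Sha(A_\chi/K))_\fp$. First I would assemble the exceptional set. Let $\Sigma_A'$ be the union of: the finite set of primes $\fp_0$ for which $\rho_{\fp_\phi}$ fails to have maximal image (finite by the Ribet--Momose discussion recalled in \S2.3); the primes dividing $6N\cdot(\prod_\ell c_\ell d_\ell)\cdot I$; the primes below which $\ov\rho_{\phi,\fp_0}$ is not absolutely irreducible (finite, since $\phi$ is non-CM, via Ribet's big-image theorem); and the finite set $\Sigma_A$ furnished by Proposition \ref{control0}. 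This is a finite set of primes of $\cO_\phi$ depending only on $A$.

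Next, fix $\fp_0 \notin \Sigma_A'$ with $p = \fp_0 \cap \BZ$, an imaginary quadratic field $K$ with $p \nmid h_K$, and $\chi \in \fX$ with $P_f(\chi)$ a $\fp$-unit, where $\fp \mid \fp_0$ in $L = L_{\phi,\chi}$. Since $P_f(\chi) \neq 0$, the inexplicit Waldspurger formula (or equivalently the formula of \cite{CST} recalled in \S2.4) gives $L(1, A_\chi) \neq 0$, hence the analytic rank of $A_\chi/K$ is zero. By the Gross--Zagier--Kolyvagin-type input underlying the Euler system of toric periods (Bertolini--Darmon as refined by Nekov\'a\v r), nonvanishing of $L(1,A_\chi)$ forces $A_\chi(K)$ to be finite and $\Sha(A_\chi/K)$ to be finite; in particular $\rank_{\cO_L} A_\chi(K) = 0 = \ord_{s=1} L(s, A_\chi)$, which is part (2) of BSD. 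Then Proposition \ref{BD}, whose hypotheses (1) and (2) hold because $\fp_0 \notin \Sigma_A'$ and whose hypothesis (3) is our assumption, yields $\Sha(A_\chi/K)[\fp^\infty] = 0$ for $|D_K| \gg 0$; combined with $p \nmid h_K$ this also gives $\Sha(A/H_K)^\chi[\fp^\infty] = 0$.

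It remains to check the formula part, i.e. that $v_\fp(\CL(A_\chi)) = v_\fp(\#\Sha(A_\chi/K)[\fp^\infty])$; since the right side is $0$ we must show $\CL(A_\chi)$ is a $\fp$-unit. This is exactly the ``in particular'' clause of Proposition \ref{control0}: as $\fp_0 \notin \Sigma_A \subseteq \Sigma_A'$, the ideal $\CL(A_\chi)/P_f(\chi)^2$ is coprime to $\fp_0$, and $P_f(\chi)$ is a $\fp$-unit by hypothesis, so $\CL(A_\chi)$ is a $\fp$-unit. Hence both sides of BSD$_\fp$ for $A_\chi$ are trivial at $\fp$, which is the asserted $\fp$-part of the Birch--Swinnerton-Dyer conjecture. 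The only genuinely substantive steps are the two cited propositions; the remaining work is bookkeeping, the main subtlety being to make sure the ``$|D_K| \gg 0$'' in Proposition \ref{BD} is absorbed correctly (it costs us finitely many $K$, which is harmless for the intended application in \S4 where one only needs the statement for all but finitely many $K$). I do not expect a serious obstacle here beyond carefully tracking that every prime one needs to exclude has already been placed into $\Sigma_A'$.
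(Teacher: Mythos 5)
Your proposal is correct and is essentially the proof the paper intends: the paper states Corollary \ref{BSD0} as an immediate consequence of Proposition \ref{BD} together with Proposition \ref{control0}, and your write-up simply spells out that combination (rank part from nonvanishing of $L(1,A_\chi)$ via the Euler system, $\fp$-part of $\Sha$ from Proposition \ref{BD}, $\fp$-unit analytic Sha from Proposition \ref{control0}), with the exceptional set assembled exactly as the paper's hypotheses require. Your remark about absorbing the ``$|D_K|\gg 0$'' condition is a fair observation about the corollary's statement rather than a defect of your argument.
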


The exceptional set of primes $\Sigma_{A}$ in Proposition \ref{control0} admits an explicit description under mild hypothesis.

\begin{prop}\label{explicit0}
Let $\phi \in S_{2}(\Gamma_{0}(N))$ be a non-CM newform and $L_{\phi}$ the corresponding Hecke field.
Let $K$ be an imaginary quadratic field and $\chi$ an unramified finite order anticyclotomic Hecke charcter over $K$ as above.
Let $N = N^+ N^-$ where
   $l|N^+$ (resp. $l|N^-$) if and only if $l|N$ and split in $K$ (resp. non-split in $K$).
Let $p$ be a prime and $\fp_0|p$ a prime of $L_{\phi}$.
Suppose that the following holds.
\begin{itemize}
\item [(1)] $(N, D_{K})=1$,
\item [(2)]  $N^-$ is square-free
   with odd number of prime factors,
 \item [(3)]$p\nmid 6ND_{K}$ and
  \item [(4)] The Galois representation $\rho_{A, \fp_0}$ has maximal image and the residual representation $\ov{\rho}_{A,\fp_{0}}$ is ramified at $\ell |N^-$ with $\ell^2\equiv 1\mod p$.
\end{itemize}
For all primes $\fp|\fp_0$ of $L=L_{\phi,\chi}$, we then have
$$
\ord_{\fp} \CL(A_\chi)=2\cdot \ord_\fp (P_\chi(f))-2 \cdot \sum_{\ell|N^+, \chi|_{D_\ell} =1}\ord_p(c_\ell).
$$
\end{prop}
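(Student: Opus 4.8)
The plan is to combine the explicit Waldspurger formula from \cite[Thm. 1.8--1.9]{CST} with a careful local analysis of each factor in the formula for $\CL(A_\chi)$ displayed above, tracking $\fp$-valuations under the stronger hypotheses (1)--(4) of Proposition \ref{explicit0}. First I would observe that since $(N,D_K)=1$ we have $\Sigma_D=\emptyset$, so the power of $2$ contributes nothing at $\fp|\fp_0$ because $p\nmid 6$. Next I would invoke the hypothesis that $\rho_{A,\fp_0}$ has maximal image to control the torsion: by Proposition \ref{torsion} (or directly, since maximal image forces $A_\chi(K)_\tor$ to have order dividing a bounded quantity, and $p\nmid$ that quantity once $p>3$ and the image is maximal) the terms $\Fitt(A_\chi(K)_\tor)$ and $\Fitt(A^\vee_{\chi^{-1}}(K)_\tor)$ are $\fp$-units. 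Similarly $[\CO_K^\times:\BZ^\times]\in\{1\}$ for $|D_K|$ large (or is a power of $2$ or $3$), hence a $\fp$-unit.

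The heart of the argument is the analysis of the period quotient and the Tamagawa factor. For the period: by \cite[\S4.4.1]{BCW} one has $\Omega(A_\chi/K)=\Omega(A/K)$ up to factors supported on primes dividing $N$, and since $p\nmid N$ the ratio $\frac{\pi^2(\phi,\phi)_{\Gamma_0(N)}}{\sqrt{|D_K|^2}\,\Omega(A_\chi/K)}$ together with $\prod_v\Fitt(c_v(A_\chi))$ should, up to the $N^+$-Tamagawa numbers $c_\ell$ with $\chi|_{D_\ell}=1$, be a $\fp$-unit; the key input is the description of Tamagawa numbers of the Serre tensor (as used in Proposition \ref{control0}, via \cite[Prop. 3]{Kh}) showing that the only $\fp$-divisibility that survives comes precisely from the primes $\ell|N^+$ where the local character is trivial, contributing $c_\ell$. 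For primes $\ell|N^-$, the hypothesis that $\ov\rho_{A,\fp_0}$ is ramified at $\ell$ with $\ell^2\equiv1\bmod p$ is exactly what kills the potential $\fp$-divisibility of the corresponding local factor $\beta^0(f_{1,v},f_{2,v})/\beta^0(f'_{1,v},f'_{2,v})$ and of the local component of $(f_1,f_2)_{\wh R^\times}$ — this is where I would need to compute the ratio of local pairings at a ramified prime using the explicit local test vectors of \cite[Thm. 1.6]{CST} and the order $R$ of \cite[\S1.1]{CST}, and check its valuation vanishes under (2) and (4). The Archimedean constant $C_\infty$ and the split-prime local ratios are fixed rational numbers once the $S$-type $\chi_{0,S}$ is fixed, so they contribute nothing new at $\fp_0$ for all but finitely many $\fp_0$; under hypothesis (3) one argues they are already $\fp$-units.

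Putting these together: in the displayed expression
$$
\CL(A_\chi)=
\frac{C_{\infty}\,(\phi,\phi)_{\Gamma_0(N)}}{\sqrt{|D_K|^2}\,\Omega(A_\chi/K)}\cdot\frac{P_f(\chi)^2}{(f_1,f_2)_{\wh R^\times}}\cdot\frac{\Fitt(A_\chi(K)_\tor)\Fitt(A^\vee_{\chi^{-1}}(K)_\tor)}{2^{\#\Sigma_D-5}[\cO_K^\times:\BZ^\times]^2\prod_v\Fitt(c_v(A_\chi))}\cdot\prod_{v\in S\setminus\{\infty\}}\frac{\beta^0(f_{1,v},f_{2,v})}{\beta^0(f'_{1,v},f'_{1,J,v})},
$$
every factor other than $P_f(\chi)^2$ and the product $\prod_{\ell|N^+,\ \chi|_{D_\ell}=1}c_\ell$ is a $\fp$-unit, while the latter contributes $\ord_\fp$ equal to $\sum_{\ell|N^+,\chi|_{D_\ell}=1}\ord_p(c_\ell)$ but with the opposite sign (it sits in the denominator). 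Taking $\ord_\fp$ of both sides yields
$$
\ord_\fp\CL(A_\chi)=2\,\ord_\fp(P_f(\chi))-2\sum_{\ell|N^+,\ \chi|_{D_\ell}=1}\ord_p(c_\ell),
$$
as claimed (note $P_{f_1}(\chi)P_{f_2}(\chi^{-1})=P_f(\chi)^2$ by the identity $P_{f_2'}(\chi^{-1})=P_{f_1'}(\chi)$ recalled in the proof of Proposition \ref{control0}).

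\medskip\noindent\textbf{Main obstacle.} The delicate point is the precise local computation at the primes $\ell|N^-$: one must show that the ratio of local pairings $\beta^0(f_{1,\ell},f_{2,\ell})/\beta^0(f'_{1,\ell},f'_{1,J,\ell})$ and the $\ell$-component of $(f_1,f_2)_{\wh R^\times}$ have $\fp$-valuation zero, which requires knowing the local test vectors explicitly for a ramified (Steinberg-type, since $N^-$ squarefree) representation twisted by a ramified character of the non-split torus, and using that $\ov\rho_{A,\fp_0}$ ramified at $\ell$ with $\ell^2\equiv1\bmod p$ prevents a factor of $\ell^2-1$ (which would otherwise appear in a local $L$- or $\epsilon$-factor denominator) from being divisible by $p$. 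Matching this with the normalization of $f$ as a $\fp$-primitive toric vector and with the order $R$ of \cite[\S1.1]{CST} — rather than the maximal order used elsewhere — is the part that demands the most care; everything else is bookkeeping of valuations already carried out in Propositions \ref{BD} and \ref{control0}.
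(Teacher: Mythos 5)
Your overall bookkeeping matches the shape of the paper's argument (reduce to the explicit Waldspurger formula, note $\Sigma_D=\emptyset$, kill torsion by maximal image, identify the surviving Tamagawa contribution at $\ell\mid N^+$ with $\chi|_{D_\ell}=1$), but there is a genuine gap at the heart of the proof: the valuation of the period ratio. The paper does not compute local pairings at $\ell\mid N^-$; the decisive input is the global Pollack--Weston formula \cite[(12)]{PW},
$$
\ord_{\fp_0} \left(\frac{8\pi^2 (\phi, \phi)}{\Omega_A (f, f)}\right)=\ord_{\fp_0}\Big(\prod_{\ell |N^-} c_\ell\Big),
$$
which is then cancelled against the $N^-$-part of $\prod_v\Fitt(c_v(A_\chi))$, whose valuation is $\ord_p\big(\prod_{\ell|N^-}c_\ell\prod_{\ell|N^+,\chi|_{D_\ell}=1}c_\ell^2\big)$. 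Your claim that ``every factor other than $P_f(\chi)^2$ and $\prod_{\ell\mid N^+,\,\chi|_{D_\ell}=1}c_\ell$ is a $\fp$-unit'' is false factor by factor: the ratio $(\phi,\phi)_{\Gamma_0(N)}/\big(\Omega(A/K)\,(f,f)\big)$ is a congruence number comparing the $\GL_2$-Petersson norm with the quaternionic one and the N\'eron period, and it genuinely acquires valuation $\sum_{\ell\mid N^-}\ord_{\fp_0}(c_\ell)$; only after cancellation with the $N^-$-Tamagawa factors in the denominator does one land on the stated formula. No purely local computation of $\beta^0(f_{1,v},f_{2,v})/\beta^0(f'_{1,v},f'_{2,v})$ can produce this, because these local ratios are normalized quantities while the period ratio encodes level-lowering congruences for $\phi$ --- a global, not local, phenomenon.

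Relatedly, you misread hypothesis (4). It does not assert $p\nmid(\ell^2-1)$; it requires $\ov{\rho}_{A,\fp_0}$ to be \emph{ramified} at exactly those $\ell\mid N^-$ with $\ell^2\equiv 1\bmod p$, i.e.\ at the primes where $p$ \emph{does} divide $\ell^2-1$. This is precisely the freeness/multiplicity-one condition under which Pollack--Weston prove the period-ratio identity above, not a device for avoiding a factor of $\ell^2-1$ in a local denominator. Without citing (or reproving) that result, your argument cannot close; with it, the remaining steps you describe (torsion, units, $\Omega(A_\chi/K)=\Omega(A/K)$ away from $N$, and the identity $P_{f_2'}(\chi^{-1})=P_{f_1'}(\chi)$, which in the paper is packaged via the Atkin--Lehner element $\tau_B$ and yields the harmless root of unity $\chi^{-1}_{\CN^+}(N^+)$) do go through as in Proposition \ref{control0}.
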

\begin{proof}

We first describe an explicit form of Waldspurger formula under the hypotheses that $N^-$ is square-free with odd number of prime factors.

We have
\[L(1,\phi,\chi) =  \frac{8\pi^2(\phi,\phi)_{\Gamma_0(N)}}
	   {[\CO_K^\times:\BZ^\times]^2\sqrt{|D_{K}|}}\frac{P_f(\chi)^2}{\langle f,f \rangle} \chi^{-1}_{\CN^+}(N^+).\]
	   Here the notation is as in the paragraphs following Conjecture \ref{BSD}. This follows from an analysis of an explicit version in \cite[\S1.3]{CST}.
	   We indicate the sketch below.
	
	   Let $J \in B^\times$ such that $J t = \bar{t} J$ for any $t \in K$ as in the proof of Proposition \ref{control0}.
	   We have
	   \begin{equation}\label{J-action}
		   \frac{P_{f_{J}}(\chi)}{( f,f )} = \frac{P_{f}(\chi)}{\langle f,f \rangle}\chi^{-1}_{\CN^+}(N^+)
   	   \end{equation}
for $f_{J}$ being the $J$-translate as in the proof of Proposition \ref{control0}.

	   The equality can be deduced as follows. We may assume
	   \[J \in \iota_{N^+}^{-1}\left[\matrixx{N^+}{0}{0}{1}\right] \cdot \tau_B \cdot \wh{R}^\times.\]
	   For $\ell|N^+$, we thus have
	   \[f(tJ_\ell) = \varepsilon_\ell(\pi_\ell)f\left[t^{(\ell)}\iota_\ell^{-1} \matrixx{t_\ell N_\ell}{0}{0}{t_{\bar{\ell}}}\right],
             \quad t \in \wh{K}^\times\]
           and
	   \[P_{f_{J_{\ell}}}(\chi) = \chi_\ell^{-1}(N_\ell)P_{f_{\tau_{B,\ell}}}(\chi).\]
	   As $f_{\tau_{B}}$ and $f$ both in the one-dimensional space $V(\pi_{B},\chi)$,
	   there exists a constant $c \in S^1$ such that
	   \[f_{\tau_{B}} = cf.\]
	   We thus have
	   \[\frac{P_{f_{J}}(\chi)}{( f,f )} = \frac{P_{f_{\tau_B}}(\chi)}{(f,f)}\chi^{-1}_{\CN^+}(N^+)
	   = \frac{cP_{f}(\chi)}{(f,f)}\chi^{-1}_{\CN^+}(N^+) = \frac{P_{f}(\chi)}{\langle f,f\rangle}\chi^{-1}_{\CN^+}(N^+).\]
	   	
It thus follows that
$$\CL(A_\chi)=\frac{8\pi^2 (\phi,\phi)_{\Gamma_0(N)}}{\sqrt{|D_K|^2}\Omega(A_\chi/K)}\cdot 
\frac{P_{f}(\chi)^{2}}{\langle f, f \rangle}\cdot \frac{\Fitt (A_\chi(K)_\tor) \Fitt (A^\vee_{\chi^{-1}}(K)_\tor)}{\prod_{v}\Fitt(c_v(A_\chi))} \cdot \chi^{-1}_{\CN^+}(N^+).$$

We now consider $\fp$-adic valuation of the terms in the right hand side of the above expression.


Recall that we have
$$
\Omega(A_{\chi}/K)=\Omega(A/K)
$$
as fractional ideals up to primes dividing $N$.

 In view of the hypothesis (4) on Galois image, both $A_\chi(K)$ and $A^\vee_{\chi^{-1}}(K)$ have no non-trivial $\fp_0$-torsion point.

From (\cite[(12)]{PW}), we recall that
$$
\ord_{\fp_0} \left(\frac{8\pi^2 (\phi, \phi)}{\Omega_A (f, f)}\right)=\ord_{\fp_0}(\prod_{\ell |N^-} c_\ell).
$$

The proposition thus follows by noting that
$$
\ord_{\fp_0}\left(\prod_v c_{v}(A_\chi)\right)=\ord_p \left( \prod_{\ell |N^-} c_\ell \prod_{\ell |N^+, \chi|_{D_\ell}=1} c_\ell^2\right).
$$
\end{proof}

We have the following immediate consequence.

\begin{cor}\label{BSDE0}
Let $\phi \in S_{2}(\Gamma_{0}(N))$ be a non-CM newform and $L_{\phi}$ the corresponding Hecke field.
Let $K$ be an imaginary quadratic field and $\chi$ an unramified finite order anticyclotomic Hecke charcater over $K$ as above.
Let $N = N^+ N^-$ where
   $l|N^+$ (resp. $l|N^-$) if and only if $l|N$ and split in $K$ (resp. non-split in $K$).
Let $p$ be a prime and $\fp_0|p$ a prime of $L_{\phi}$.
Suppose that the following holds.
\begin{itemize}
\item [(1)] $(N, D_{K})=1$,
\item [(2)]  $N^-$ is square-free
   with odd number of prime factors,
 \item [(3)]$p\nmid 6ND_{K}\cdot h_{K}$ and
  \item [(4)] The Galois representation $\rho_{A, \fp_0}$ has maximal image and the residual representation $\ov{\rho}$ is ramified at $\ell |N^-$ with $\ell^2\equiv 1\mod p$.
\end{itemize}
For all primes $\fp|\fp_0$ of $L=L_{\phi}(\chi)$, the $\fp$-part of BSD holds for the Serre tensor $A_{\chi}$ whenever the toric period $P_{f}(\chi)$ is a $\fp$-unit.
\end{cor}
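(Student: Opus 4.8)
The plan is to obtain the $\fp$-part of the Birch--Swinnerton-Dyer formula for the Serre tensor $A_\chi$ by reading off its two sides from results already in hand: the analytic side from the explicit Waldspurger formula of Proposition \ref{explicit0}, and the arithmetic side from the Euler system of toric periods underlying Proposition \ref{BD}. First I would dispose of parts (1) and (2) of Conjecture \ref{BSD} for $A_\chi$. Since $P_f(\chi)$ is a $\fp$-unit it is in particular nonzero, so the inexplicit Waldspurger formula recalled in \S2.1 gives $L(1,A_\chi)\neq 0$, whence $\ord_{s=1}L(s,A_\chi)=0$. Part (1) is modularity (Khare--Wintenberger), and the non-vanishing $P_f(\chi)\neq 0$ feeds the Bertolini--Darmon/Nekov\'a\v{r} Euler system to force $\rank_{\cO}A_\chi(K)=0$ together with finiteness of $\Sha(A_\chi/K)$, which is part (2). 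This reduces the corollary to the equality $\ord_\fp\CL(A_\chi)=\ord_\fp\Fitt_{\cO}\bigl(\Sha(A_\chi/K)\bigr)$ for each $\fp|\fp_0$.

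For the analytic side, hypotheses (1)--(4) are exactly those of Proposition \ref{explicit0}, so it applies and yields, using $\ord_\fp P_f(\chi)=0$,
$$\ord_\fp\CL(A_\chi)=2\,\ord_\fp P_f(\chi)-2\sum_{\ell|N^+,\ \chi|_{D_\ell}=1}\ord_p(c_\ell)=-2\sum_{\ell|N^+,\ \chi|_{D_\ell}=1}\ord_p(c_\ell).$$
For the arithmetic side I would check that the hypotheses of Proposition \ref{BD} follow from (1)--(4) for the prime $\fp$ at hand: maximal image of $\rho_{A,\fp_0}$ is hypothesis (4), the squarefreeness and odd parity of $N^-$ in (2) put us in the definite-quaternion situation and give $d_\ell=1$ for $\ell|N^-$, and $p\nmid 6ND_K h_K$ together with (2) and (4) give $p\nmid 6N\cdot(\prod_\ell c_\ell d_\ell)\cdot I\cdot h_K$; Proposition \ref{BD} then gives $\Sha(A_\chi/K)[\fp^\infty]=0$, hence $\ord_\fp\Fitt_{\cO}(\Sha(A_\chi/K))=0$. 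Since the two displayed valuations must agree, both equal $0$; in particular the Tamagawa correction $\sum_{\ell|N^+,\ \chi|_{D_\ell}=1}\ord_p(c_\ell)$ vanishes --- directly for $\ell|N^+$ of additive reduction, where $c_\ell$ divides $4$ and $p\ge 5$, and for $\ell|N^+$ of multiplicative reduction by extracting $p\nmid c_\ell$ from the hypotheses (equivalently, ramification of $\ov\rho_{A,\fp_0}$ there) --- and the $\fp$-part of BSD for $A_\chi$ follows.

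The hard part will be the bookkeeping in the last paragraph, and it is here that hypotheses (2)--(4) do their real work. Two points need genuine care. First, one must make the vanishing of the Euler-system constants $C_2,\dots,C_8$ of \cite[\S2]{N}, hence of $C_\chi$, uniform in $K$ rather than only for $|D_K|\gg 0$ as in Proposition \ref{BD}; the ramification hypothesis in (4) should be the decisive input, since it is precisely what rules out $K\subset\BQ(A[\fp^\infty])$, which is all that the case analysis in the proof of Proposition \ref{BD} actually uses. Second, one must control the Tamagawa numbers $c_v(A_\chi)$ of the Serre tensor at primes $\ell|N^+$ of multiplicative reduction, translating hypotheses (2)--(4) into $\fp$-indivisibility of the relevant Tate parameters. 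Granting these two points, Corollary \ref{BSDE0} is an immediate assembly of Proposition \ref{explicit0}, Proposition \ref{BD}, and modularity.
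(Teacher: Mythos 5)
Your assembly---analytic side from Proposition \ref{explicit0}, arithmetic side ($\Sha(A_\chi/K)[\fp^\infty]=0$) from Proposition \ref{BD}, plus the reduction of parts (1) and (2) of Conjecture \ref{BSD} to non-vanishing of the toric period---is exactly the route the paper intends: the corollary is stated there as an ``immediate consequence'' with no written proof, so your reconstruction is the right skeleton.

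That said, one step of your final paragraph is not a proof as written. The sentence ``since the two displayed valuations must agree, both equal $0$'' is circular: the agreement of $\ord_\fp\CL(A_\chi)$ with $\ord_\fp\Fitt(\Sha(A_\chi/K))$ \emph{is} the $\fp$-part of BSD, so it cannot be invoked to conclude that the Tamagawa correction $\sum_{\ell\mid N^+,\ \chi|_{D_\ell}=1}\ord_p(c_\ell)$ vanishes. Your attempted direct repair also fails at multiplicative primes $\ell\mid N^+$: hypothesis (4) asserts ramification of $\ov{\rho}_{A,\fp_0}$ only at $\ell\mid N^-$, so $p\nmid c_\ell$ for multiplicative $\ell\mid N^+$ is not extractable from (1)--(4). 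The same issue infects the appeal to Proposition \ref{BD}, whose hypothesis requires $p\nmid \prod_\ell c_\ell d_\ell\cdot I$, whereas the corollary only assumes $p\nmid 6ND_K h_K$; neither the Tamagawa numbers at the $N^+$-primes, nor $d_\ell$ there (since $N^+$ need not be square-free), nor the ideal $I$ is controlled. (You also correctly flag that Proposition \ref{BD} concludes only for $|D_K|\gg 0$.) These defects are inherited from the corollary as stated---the paper's later Corollary \ref{explicit'0} adds $N$ square-free and $p\nmid\prod_\ell c_\ell$ precisely to close them---but in your write-up the circular sentence must be deleted and replaced either by these additional hypotheses or by an actual argument for $p\nmid c_\ell$ at $\ell\mid N^+$.
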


\section{Heegner Points}

In this section, we consider Heegner points arising from a Shimura curve associated to an indefinite quaternion algebra over the rationals. In \S 3.1, we introduce the setup. 
In \S3.2, we recall generalities regarding CM points on a Shimura curve. 
In \S3.-3.4., we prove horizontal $p$-indivisibility of Heegner points. 
In \S3.5 and \S3.6, we deduce the consequences of the $p$-indivisibility for Tate--Shafarevich group and analytic Sha, respectively.

\subsection{Setup}

In this subsection, we introduce the setup regarding Heegner points.

Let $\phi\in S_2(\Gamma_0(N))$ be a newform with weight $2$ and level $\Gamma_{0}(N)$ for $N \geq 3$. In particular, we consider newforms with trivial central character. 
Let $L_\phi$ denote the Hecke field corresponding to the newform $\phi$.

Let $B$ be an indefinite quaternion algebra over $\BQ$ such that there is an irreducible automorphic  representation $\pi_{B}$ on $B_\BA^\times$ whose Jacquet-Langlands correspondence is the automorphic representation associated to $\phi$. Recall $B_{\BA}=B\otimes_{\BQ}\BA$.

Let $S=\Supp (N \infty)$.  Let $K_{0, S}\subset B_S$ be a $\BQ_S$-subalgebra such that $K_{0, \infty}=\BC$ and $K_{0, v}/\BQ_v$ is separable quadratic. For any $v\in S$, we say that $v$ is non-split if $K_{0, v}$ is a field and split otherwise. Let
$$ U_{0, S}:=\prod_{v\in S,\ \text{$v$  split}}\CO_{K_{0, v}}^\times \times \prod_{v\in S,\ \text{$v$ non-split}} K_{0, v}^\times.$$

Suppose we are given a finite order character $\chi_{0, S}: U_{0, S}\lra \ov{\BQ}^\times$ with conductor one such that the following holds.
\begin{itemize}
\item[(LC1)]
$
\omega \cdot \chi_{0, S}\big|_{\BQ_S^\times \cap U_{0, S}}=1.
$
\item[(LC2)]
$
\epsilon(\phi, \chi_{0,v})\chi_{0,v}\eta_v(-1)=\epsilon(B_v)
$
for all places $v|N $ with the local root number $\epsilon(\phi,\chi_{0,v})$ corresponding to the Rankin--Selberg convolution.
\end{itemize}

Fix a maximal order $R^{(S)}$ of $B_{\BA}^{(S)}\cong M_2(\BA^{(S)})$.
Let $U^{(S)}=R^{(S)\times}$. Note that $U^{(S)}$ is a maximal compact subgroup of
$B_{\BA}^{(S)^{\times}}\cong \GL_2(\BA^{(S)})$.

Let $p$ be an odd prime such that $p\nmid N$. 
As in the introduction, we fix an embedding $\iota_{p}:\ov{\BQ}\hookrightarrow \BC_p$. 
Let $v_p$ be a $p$-adic valuation. 
Let $\BF$ be an algebraic closure of $\BF_p$.


Let $\Theta_{S}$ denote the set of imaginary quadratic extensions as in Definition \ref{Theta}.
Recall that there exist infinitely many imaginary quadratic extensions $K/\BQ$ with $K \in \Theta_S$
(\cite{Br}, \cite{W} and \cite{Be}).
For $K \in \Theta_S$, we fix a torus embedding $\iota_{K}$ as in \S2.1.

For each $K\in \Theta_S$, let $\fX_{K,\chi_{0,S}}$ denote the set of finite order characters $\chi$ over $K$ as in Definition \ref{char}.
Recall that the conductor of $\chi$ equals one. Moreover, we have
\begin{itemize}
\item[(RN)] $\epsilon(\phi,\chi)=-1$.
\end{itemize}
Here $\epsilon(\phi,\chi)$ denotes the global root number of the Rankin--Selberg convolution corresponding to the pair $(\phi,\chi)$. 

In the rest of the section, we let $\Theta=\Theta_{S}$ and
$\fX_{K} = \fX_{K,\chi_{0,S}}$.


\begin{lem}\label{existence2}
The set $\fX_{K}$ is non-empty for all but finitely many imaginary quadratic fields $K$ with $K\in \Theta$. Moreover, it is a homogenous space for the class group $\Cl_{K}$.
\end{lem}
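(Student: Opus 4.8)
The statement is verbatim the same as Lemma \ref{existence1}, only with $B$ now indefinite rather than definite; since the proof of Lemma \ref{existence1} never used definiteness of $B$, the same argument applies essentially unchanged. The plan is as follows. First I would discard the finitely many imaginary quadratic fields $K\in\Theta$ for which $\CO_K^\times\neq\BZ^\times$ (i.e. $K=\BQ(i),\BQ(\omega)$, already excluded from $\Theta$ by Definition \ref{Theta}(iv), so in fact nothing is discarded here), so that $\CO_K^\times=\BZ^\times$ throughout. Next, recall from class field theory that $\Cl_K=\BA_K^\times/K^\times K_\infty^\times\BA^\times U$, where $U=U_SU^{(S)}$ is the level group attached to the toric form $f$ (here $U^{(S)}=R^{(S)\times}$ is maximal and $U_{0,v}^\times\subset U_v$ for $v\in S\setminus\{\infty\}$ by (L1)--(L2), exactly as in \S3.1).

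The key input, exactly as in Lemma \ref{existence1}, is the existence (for all but finitely many $K\in\Theta$) of a nontrivial Hecke character $\epsilon$ of $\BA_K^\times$ of conductor one, trivial on $K_\infty^\times$ and on $\BA^\times$; this is the genus-theory / Chebotarev-type statement invoked via \cite{KhKi}. Given such $\epsilon$, set $\chi_{0,S}'=\chi_{0,S}\cdot\epsilon|_{U_{0,S}}$. By (LC1), $\chi_{0,S}'$ still satisfies the compatibility $\omega\cdot\chi_{0,S}'|_{\BQ_S^\times\cap U_{0,S}}=1$ needed to extend it: since $\chi_{0,S}'$ is trivial on $\BQ_S^\times\cap U_{0,S}$ after twisting by $\omega$ and $\epsilon$ is trivial on $\BA^\times$, one obtains a character $\chi'$ of $\BA_K^\times/K^\times K_\infty^\times\BA^\times U$ restricting to $\chi_{0,S}'$ on $U_{0,S}$. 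Then $\chi'\cdot\epsilon^{-1}$ has conductor one, is unramified outside $S$, restricts to $\chi_{0,S}$ on $U_{0,S}$, and satisfies $(\chi'\epsilon^{-1})|_{\BA^\times}\cdot\omega=1$; hence $\chi'\epsilon^{-1}\in\fX_K$, proving nonemptiness.

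For the homogeneity statement, fix any $\chi\in\fX_K$. Then $\chi'\epsilon^{-1}\chi^{-1}$ is trivial on $U_{0,S}$ (both $\chi$ and $\chi'\epsilon^{-1}$ restrict to $\chi_{0,S}$ there), trivial on $K_\infty^\times$ and $\BA^\times$, and unramified outside $S$, hence factors through $\Cl_K=\BA_K^\times/K^\times K_\infty^\times\BA^\times U$. Conversely, twisting any fixed element of $\fX_K$ by a character of $\Cl_K$ lands back in $\fX_K$, and this action is free since the twist is trivial precisely when the $\Cl_K$-character is. Thus $\fX_K$ is a torsor under $\widehat{\Cl}_K$, equivalently a homogeneous space for $\Cl_K$, as claimed. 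The only point requiring genuine care — and the one I would expect to be the main obstacle if one tried to make everything fully explicit — is the uniform existence of the auxiliary character $\epsilon$ for all but finitely many $K$; but this is already granted by the cited work and the argument of Lemma \ref{existence1}, so here it is routine.
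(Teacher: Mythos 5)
Your proposal is correct and is exactly the paper's approach: the paper's proof of Lemma \ref{existence2} consists of the single line ``The same argument as in the proof of Lemma \ref{existence1} applies,'' and you have simply spelled that argument out, correctly observing that definiteness of $B$ plays no role in it.
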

\begin{proof}
The same argument as in the proof of Lemma \ref{existence1} applies.
\end{proof}

Let $A=A_{\phi}$ be an abelian variety over $\BQ$ associated to $\phi$ such that $\cO_{L_{\phi}} \subset \End(A)$.

As in \cite{YZZ}, we have the representation of $B_{\BA}^{(\infty),\times}$ over the field $M:=\End^0(A_{/\BQ})$ given by
$$\pi^{B}=\varinjlim_{U\subset B_{\BA}^{(\infty),\times}} \Hom_\xi^0(X_U, A).$$
Here $\xi$ is a Hodge class on $X=\varprojlim_{U} X_{U}$.
Recall that
$$
\pi^{B} \otimes_{M}\BC \simeq \pi_{B,f}
$$
for $\pi_{B,f}$ being the finite part of $\pi_{B}$.

Let $\CO$ be the ring of integers of the field $\BQ(\phi, \chi_{0, S})$ generated over $\BQ$ by the Hecke eigenvalues of $\phi$ and the values of $\chi_{0, S}$. Let $\fp$ be the prime ideal of $\CO$ corresponding to $\iota_p$. 
Let $\cO_{(\fp)}$ be the localisation of the integer ring $\cO$ at the prime ideal $\fp$.

We have the following existence of toric test vectors. 

\begin{lem}
\label{TV1}
There exists a non-zero form $f\in \pi_B$ defined over $\cO_{(\fp)}$
satisfying the following.
\begin{itemize}
\item[(F1)] The subgroup $U_{0, S}$ acts on $f$ via $\chi_{0, S}$ and
\item[(F2)] $f \in \pi_{B}^{U^{(S)}}$.
\end{itemize}
\end{lem}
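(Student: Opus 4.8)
The plan is to transcribe the proof of Lemma \ref{TV0} to the indefinite setting; the only genuinely new issue is that here $\pi_B$ is realised geometrically, as modular parametrisations of Shimura curves, so the abstract toric line must be descended to $\cO_{(\fp)}$ inside $\varinjlim_U\Hom^0_\xi(X_U,A)$.

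First I would record the local input. For each finite $v\in S$, the conductor-one hypothesis on $\chi_{0,S}$ together with (LC2) and the local dichotomy of Saito and Tunnell (\cite{S}, \cite{T0}, \cite{T}; see also \cite{YZZ}) guarantee that $\pi_{B,v}$ carries a nonzero vector on which $K_{0,v}^\times$ (when $v$ is non-split) or $\CO_{K_{0,v}}^\times$ (when $v$ is split) acts through $\chi_{0,v}$; at the archimedean place $B_\infty\cong M_2(\BR)$ is split and the lowest $\SO(2)$-type vector of the weight-two discrete series is the relevant component. Outside $S$ the representation $\pi_{B,v}$ is spherical, with normalised $R_v^\times$-fixed vector $\varphi_v^0$. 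Assembling these choices into a pure tensor $\varphi$ and replacing it by the $\chi_{0,S}$-average
$$g\longmapsto \int_{U_{0,S}/(\BQ_S^\times\cap U_{0,S})}\chi_{0,S}(t)\,\varphi(gt)\,dt,$$
a finite sum since $U_{0,S}/(\BQ_S^\times\cap U_{0,S})$ is compact and $\chi_{0,S}$ has conductor one, with (LC1) ensuring well-definedness, produces a nonzero vector of $\pi_B$ on which $U_{0,S}$ acts through $\chi_{0,S}$ --- property (F1) --- and which is spherical outside $S$ --- property (F2). Thus the toric line $V=V(\pi_B,\chi_{0,S})$ is nonzero over $\BC$; the finer statement that its members can be chosen so that the associated toric period is non-vanishing for suitable $(K,\chi)$ is the content of the explicit test-vector construction of Cai--Shu--Tian (\cite{CST}), but only non-triviality is needed here.

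Next I would descend $V$ to $\cO_{(\fp)}$. By \cite{YZZ} the space $\pi^B=\varinjlim_U\Hom^0_\xi(X_U,A)$ is defined over $M=\End^0(A_{/\BQ})$ with $\pi^B\otimes_M\BC\cong\pi_{B,f}$, and $\pi^{B,U^{(S)}}$ is an $M$-rational subspace; imposing on it the $U_{0,S}$-eigencharacter $\chi_{0,S}$, whose matrix coefficients lie in $\CO$, cuts out $V$ over $\BQ(\phi,\chi_{0,S})$. Choosing a nonzero element of $V$ and scaling by a suitable element of $\BQ(\phi,\chi_{0,S})^\times$ prime to $\fp$ --- using the integral models of $X_U$ over $\BZ[1/N]$ and the \Neron model of $A$ to control denominators --- yields $f\in V$ defined over $\cO_{(\fp)}$.

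The main obstacle is this last descent: one must match the abstract, representation-theoretic $\chi_{0,S}$-toric line with a line genuinely $\cO_{(\fp)}$-rational inside the geometric realisation $\varinjlim_U\Hom^0_\xi(X_U,A)$. This rests on (i) the Galois-rationality of the $U_{0,S}$-eigenspace decomposition, valid because $\chi_{0,S}$ has algebraic, indeed $\CO$-valued, matrix coefficients, and (ii) a compatible integral structure on the modular parametrisations coming from integral models of Shimura curves; both are standard, but their clean combination --- together with the check, via (LC1)--(LC2), that the descended line remains nonzero --- is where the actual work lies.
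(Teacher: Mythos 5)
Your proof is correct and follows essentially the same route as the paper, whose proof of Lemma \ref{TV1} simply states that the argument of Lemma \ref{TV0} applies: the Tunnell--Saito dichotomy (guaranteed by (LC2)) supplies the non-zero toric line, and the $\chi_{0,S}$-average of a test vector spherical outside $S$ realises properties (F1) and (F2). The only difference is that you spell out the descent of the toric line to $\cO_{(\fp)}$ inside $\varinjlim_U \Hom^0_\xi(X_U,A)$, a point the paper leaves implicit (its $\fp$-primitive normalisation via the geometric definition of modular forms is performed only after the lemma).
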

\begin{proof}
The same argument as in the proof of Lemma \ref{TV0} applies.
\end{proof}
Note that, $f$ is spherical outside $S$.

We further normalise $f$ to be $\fp$-primitive in the sense arising from geometric definition of modular forms (\S3.2.2).


Let $U$ be an open compact subgroup of ${B}_{\BA}^{(\infty),\times}$ such that 
$U=U_{S}U^{(S)}$ with $U_{S}\subset B_{S}^\times$ and $U_{S}=\prod_{v \in S} U_{v}$ such that\begin{itemize}
\item[(L1)] $f\in \pi_{B}^U$ and
\item[(L2)] $\CO_{0, v}^\times \subset U_v$ for all $v\in S \backslash \{\infty\}$. 
\end{itemize}

Let $X_{U}$ be the corresponding Shimura set of level
$U$.
As a modular parametrisation, $f$ corresponds to a morphism
$$
f:X_{U}\rightarrow A \otimes_{\cO_{\phi}} \cO
$$
and it maps the Hodge class $\xi$ to the identity element.
Here $A \otimes_{\cO_{\phi}} \cO$ denotes the Serre-tensor.
We may also regard $f$ as a $\overline{\BZ}_{(p)}$-valued weight two modular form at CM points on the Shimura curve $X_{U}$.
For generalities on CM points, we refer to \S3.2.

Let $f^{(p)}$ be the corresponding $p$-depletion.
Let $d$ be the Katz $p$-adic differential operator.
In view of the $p$-adic Waldspurger formula (\cite{BDP1} and \cite{LZZ}), the CM values of weight zero $p$-adic modular form $d^{-1}(f^{(p)})$ are closely related to $p$-adic logarithm of Heegner points arising from CM points on the Shimura curve.

\subsection{Shimura curve}
In this subsection, we briefly describe generalities on Shimura curves, modular forms, CM points and the underlying Igusa tower. For a detailed treatment, we refer to \cite[Ch. 7]{Hi5} and \cite[\S2]{Bu}.

As in \S3.1, $p$ denotes an odd prime and $\BF$ an algebraic closure of $\BF_p$.
Let $W$ denote the Witt ring $W(\BF)$.
Let $\mathcal{W}$ be the strict Henselisation of $\BZ_{(p)}$ given by $\iota_{p}^{-1}(W)$.

\subsubsection{Shimura curve} 
We describe generalities regarding Shimura curves. 

Let $B/\BQ$ be an indefinite quaternion algebra as in \S3.1.
In particular, $p \nmid \disc(B)$.
Let $(X_{U})_{U}$ be the corresponding tower of Shimura curves indexed by open compact subgroups $U \subset B_{\BA}^{(\infty),\times}$ and $X=\varprojlim_{U} X_{U}$ the underlying Shimura variety.

We recall the following (\cite{De2}).
\begin{thm}
\label{Sh}
Let the notation and assumptions be as above.
\begin{itemize}
\item[(1).] For a sufficiently small level $U$, the Shimura curve $X_U$ admit a smooth canonical model over the rationals.
\item[(2).] For a sufficiently small level $U$ prime to $p$, the Shimura curve $X_{U/\BQ}$ admits a smooth $p$-integral model $X_{U/\BZ_{(p)}}$.
\end{itemize}
\end{thm}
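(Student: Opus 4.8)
The statement is a recollection of the theory of canonical and integral models of Shimura curves, so the plan is to assemble it from the classical references. For part (1), I would first realise $X = \varprojlim_U X_U$ as the Shimura variety attached to the datum $(G,\mathcal{H}^{\pm})$, where $G = B^{\times}$ is regarded as a reductive group over $\BQ$ and $\mathcal{H}^{\pm} = \BC \setminus \BR$ is the $G(\BR)$-conjugacy class of the standard $h\colon \Res_{\BC/\BR}\BG_m \to G_{\BR}$ coming from the fixed isomorphism $B \otimes_{\BQ}\BR \cong M_2(\BR)$ (available since $B$ is indefinite). The reflex field of $(G,\mathcal{H}^{\pm})$ is $\BQ$, so Deligne's theorem on the existence of canonical models provides a model of $\Sh_U(G,\mathcal{H}^{\pm}) = G(\BQ)\backslash\bigl(\mathcal{H}^{\pm}\times G(\BA_f)/U\bigr)$ over $\BQ$; for $U$ sufficiently small (neat) the group $G(\BQ)$ acts freely, so the complex points form a smooth curve and the model is smooth. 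When $B$ is a division algebra this curve is already proper, and when $B = M_2(\BQ)$ one adjoins the cusps, smoothness of the compactification being furnished by the Deligne--Rapoport theory of generalised elliptic curves.

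For part (2), the key local input is $p \nmid \disc(B)$, which gives $B \otimes_{\BQ}\BQ_p \cong M_2(\BQ_p)$; I would then take the level at $p$ to be the hyperspecial maximal compact $U_p = \GL_2(\BZ_p)$, writing $U = U_p U^{(p)}$ with $U^{(p)}$ sufficiently small away from $p$. Next I would invoke the moduli interpretation of $X_U$: after fixing a maximal order $\cO_B \subset B$, it represents the functor of abelian surfaces with $\cO_B$-action --- false elliptic curves in the division case, elliptic curves with level structure in the split case --- equipped with a $U^{(p)}$-level structure. Because $U_p$ is hyperspecial, this moduli problem is smooth over $\BZ_{(p)}$: smoothness is checked on deformations via Serre--Tate / Grothendieck--Messing theory, exactly as for $Y_1(M)$ with $p \nmid M$. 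This is the content of Carayol's work on integral models of Shimura curves, refined by Boutot--Carayol (and, in the split case, of Deligne--Rapoport and Katz--Mazur). One sets $X_{U/\BZ_{(p)}}$ equal to this smooth model; its generic fibre is canonically $X_{U/\BQ}$ by compatibility of the moduli description with Deligne's canonical model, and base change to $\mathcal{W}$ and $W$ recovers the models used below for the Igusa tower.

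The only genuinely technical point --- the main obstacle --- is the bookkeeping required to pass between the adelic description of $X_U$ and a representable moduli problem over $\BZ_{(p)}$: one must match the open compact $U$ with level structures on the abelian surfaces, verify independence of the auxiliary choices (the maximal order $\cO_B$ and, when $U^{(p)}$ is not neat, an auxiliary rigidifying prime-to-$p$ level), and identify the resulting generic fibre with the canonical model. In the split case there is the additional standard issue of smoothness of the $p$-integral model along the cusps. All of this is carried out in the cited literature, so for our purposes we record the theorem and refer to \cite{De2} (together with the work of Carayol and Boutot--Carayol) for the proof.
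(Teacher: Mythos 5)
Your proposal is correct and matches the paper, which offers no argument of its own for this statement: it simply records the theorem with the citation \cite{De2} and points to \cite[\S2.2]{Bu} for the moduli description, exactly the classical theory you assemble. Your sketch is the standard content behind that citation, and your attribution of part (2) to Carayol and Boutot--Carayol (with Deligne--Rapoport/Katz--Mazur in the split case) is in fact the more precise provenance than the paper's bare reference to Deligne.
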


We also recall that $X_U$ represents a moduli functor classifying a class of elliptic curves or abelian surfaces with level $U$-structure. The difference in the moduli functors depends on whether $B$ is split. For a description of the moduli problem, we refer to \cite[\S2.2]{Bu}. Let $f_{U}:\mathcal{A}_{U}\ra X_{U/\BZ_{(p)}}$ be the universal abelian scheme. 

For level $U$ as in \S3.1, the Shimura curve $X_{U/\BQ}$ admits a $p$-integral model smooth at CM points with complex multiplication by $\cO_K$. This follows from the hypothesis (L2) and part (iv) of Definition \ref{Theta}.

In what follows, we fix a sufficiently small level $U$ prime to $p$ unless otherwise stated. 
\subsubsection{Modular forms} We briefly describe the notion of geometric modular forms. 

For an odd prime $p$, let $A$ be a $\BZ_{(p)}$-algebra. A weight two, $p$-integral modular form with level $U$ over $A$ is an element of $H^{0}(X_{U/A},\mathcal{L}_{U/A})$ for a line bundle 
$\mathcal{L}_{U/\BZ_{(p)}}$ on the Shimura curve $X_{U/\BZ_{(p)}}$ (\cite{Ha1} and \cite{HMF}). 

When $A$ is nothing but $\cO_{(\fp)}$ for $\cO$ an integer ring of a number field and $\fp$ a prime above $p$ in $\cO$, we say that a modular form over $A$ is $\fp$-primitive if its image in 
$H^{0}(X_{U/\BF_{\fp}},\mathcal{L}_{U/\BF_{\fp}})$ is non-zero. Here $\BF_\fp$ denotes the residue field of $\cO$ at the prime $\fp$.
\subsubsection{CM points} We describe generalities on CM points. 

We identify $\BA_{K}^\times$ as a subgroup of $B_{\BA}^\times$ under the embedding $\iota_{K}: K \hookrightarrow B$ for an imaginary quadratic field $K$ as in \S3.1. 
Let $U$ be an open compact subgroup of ${B}_{\BA}^{(\infty),\times}$ as in \S3.1 satisfying (L1) and (L2).

As $\iota_{K}(\cO_{K}) \subset U$,
the choice of embedding $\iota_{K}$ gives rise to a map
$$
\varphi_{K}:\Cl_{K}\rightarrow X_{U/\ov{\BQ}}.
$$
For $\sigma \in \Cl_{K}$, let $x_{\sigma} \in X_{U/\ov{\BQ}}$ be the corresponding point on the Shimura curve.
This is usually referred as a CM point on the Shimura curve. In what follows, the map
$\varphi_{K}$ plays an underlying role.

We now suppose that $K$ is an imaginary quadratic field as above such that
\begin{itemize}
\item[(ord)] $p$ splits in $K$.
\end{itemize}

Based on the CM theory of Shimura--Taniyama--Weil and the hypothesis (ord), we have the following (\cite[\S2.4]{Bu}).
\begin{lem}\label{CM}
Let the notation and assumptions be as above.
Then, the CM points $x_{\sigma}$ correspond to $p$-ordinary CM abelian varieties. Moreover, the CM points $x_{\sigma}$ are defined over the Henselisation $\mathcal{W}$.
\end{lem}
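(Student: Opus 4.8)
The plan is to unwind the moduli interpretation of the CM points $x_\sigma$ and then invoke the classical Shimura--Taniyama theory to read off both the ordinarity and the field of definition. First I would recall that, by the moduli description of $X_U$ recalled in \S3.2.1, a point $x_\sigma$ corresponds to a triple $(\mathcal{A}_\sigma, \iota_\sigma, \eta_\sigma)$ where $\mathcal{A}_\sigma$ is the abelian variety (an elliptic curve or a $\mathrm{QM}$-abelian surface, depending on whether $B$ is split) sitting in the universal family $f_U\colon \mathcal{A}_U \to X_{U/\BZ_{(p)}}$ above $x_\sigma$, and that the embedding $\iota_K\colon K \hookrightarrow B$ together with the definition of $\varphi_K$ forces $\mathcal{A}_\sigma$ to have complex multiplication by an order in $K$; in fact by $\cO_K$ itself, because $\iota_K(\cO_K)\subset U$ by (L2) and part (iv) of Definition \ref{Theta} guarantees the relevant $p$-integral model is smooth at these points. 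So each $\mathcal{A}_\sigma$ (or its underlying abelian variety, after splitting off the quaternionic multiplication) is an abelian variety with CM by $\cO_K$, and the CM type is the one determined by the fixed complex embedding $\iota_\infty$ and the chosen $\iota_K$.

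Next I would address ordinarity. The reduction mod $p$ of a CM abelian variety with CM by $\cO_K$ is $p$-ordinary precisely when $p$ splits in $K$: this is the classical dichotomy of Deuring (in the elliptic curve case) and its generalisation via Shimura--Taniyama (in the abelian surface case), since a CM type of an imaginary quadratic field is always compatible with a split prime and yields ordinary reduction, whereas an inert or ramified $p$ yields supersingular reduction. Here hypothesis (ord) supplies exactly that $p$ splits in $K$, so $\mathcal{A}_\sigma$ has $p$-ordinary reduction along the prime of $\ov{\BQ}$ determined by $\iota_p$. This is the content cited from \cite[\S2.4]{Bu}, and I would simply invoke that reference for the precise statement in the quaternionic setting.

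Finally, for the field of definition I would use the main theorem of complex multiplication (Shimura--Taniyama reciprocity): the CM point $x_\sigma$, together with its automorphic/moduli data, is defined over an abelian extension of $K$ — concretely the Hilbert class field $H_K$ — and the Galois action of $\Gal(H_K/K)\cong \Cl_K$ permutes the $x_\sigma$ simply transitively. Since $p$ splits in $K$ and $H_K/K$ is unramified, each prime of $H_K$ above $p$ has residue field $\BF_p$, and the completion of $H_K$ at the prime induced by $\iota_p$ embeds into the strict Henselisation $\mathcal{W}=\iota_p^{-1}(W)$ of $\BZ_{(p)}$ — indeed $\mathcal{W}$ is the strict Henselisation, so it contains the Henselisation of any subring unramified at $p$ with residue extension inside $\BF$. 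Hence the $x_\sigma$, viewed $p$-integrally via the smooth model at CM points, are $\mathcal{W}$-points of $X_U$. The main (essentially only) subtlety is bookkeeping: one must check that the $p$-integral moduli data defining $x_\sigma$ — the level structure away from $p$ and the canonical lift of the $p$-divisible group coming from ordinarity — are all rational over this Henselian base, which is where the hypotheses (L2) and (iv) of Definition \ref{Theta} (smoothness of the model at $\cO_K$-CM points) are used; granting \cite[\S2.4]{Bu} this is immediate, so the lemma follows.
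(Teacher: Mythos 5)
Your argument is correct and is essentially the one the paper is gesturing at: the paper gives no proof of this lemma at all, simply attributing it to Shimura--Taniyama--Weil CM theory together with hypothesis (ord) and citing \cite[\S2.4]{Bu}, so your write-up (moduli interpretation, Deuring/Shimura--Taniyama ordinarity criterion for split $p$, rationality over $H_K$ and descent of the point to $\mathcal{W}$ via smoothness of the $p$-integral model) is a faithful filling-in of that citation. One minor inaccuracy: it is not true that each prime of $H_K$ above $p$ has residue field $\BF_p$ --- the residue degree equals the order of $[\fp_K]$ in $\Cl_K$ --- but this is harmless, since all you need is that $H_K/\BQ$ is unramified at $p$, so that $\iota_p$ carries $\cO_{H_K}$ into $W=W(\BF)$ and hence the localisation into the strict Henselisation $\mathcal{W}$.
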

In view of the lemma, we can consider the mod $p$ reduction $x_{\sigma/\BF} \in X_{U/\BF}$ of the CM point $x_{\sigma}$.
We have the following key proposition based on the Serre--Tate deformation theory.
\begin{prop}
\label{CM}
Let the notation and assumptions be as above.
For $\sigma, \tau \in \Cl_{K}$, we have
$$
x_{\sigma/\BF}=x_{\tau/\BF} \iff \sigma=\tau.
$$
\end{prop}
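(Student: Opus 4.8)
The plan is to show that for each $\sigma\in\Cl_{K}$ the CM point $x_{\sigma}$, viewed over $\mathcal{W}$, is the \emph{canonical lift} (in the Serre--Tate sense) of its mod $p$ reduction $x_{\sigma/\BF}$, and then to combine the functoriality of the canonical lift with the injectivity of $\varphi_{K}$ in characteristic zero; this is the ordinary-case analogue of the linear disjointness of Heegner points modulo $p$ à la Hida and Cornut--Vatsal. The implication $\sigma=\tau\Rightarrow x_{\sigma/\BF}=x_{\tau/\BF}$ is trivial, so only the converse needs an argument.

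Since each $x_{\sigma}$ is $p$-ordinary and defined over $\mathcal{W}$ (as recalled above), $x_{\sigma/\BF}$ is an ordinary point of $X_{U/\BF}$ whose underlying abelian scheme carries the reduction of the $\cO_{K}$-action. The crux is to prove $x_{\sigma}$ is the canonical lift of $x_{\sigma/\BF}$. By Serre--Tate deformation theory the formal completion of $X_{U/\mathcal{W}}$ along $x_{\sigma/\BF}$ becomes, after base change to $W=W(\BF)$, a one-dimensional formal torus, which we identify with $\widehat{\BG}_{m}$; under this identification the origin $q=1$ is the canonical lift. Since $p$ splits in $K$, say $p\cO_{K}=\fl\overline{\fl}$, we have $\cO_{K}\otimes\BZ_{p}=\cO_{K,\fl}\times\cO_{K,\overline{\fl}}\cong\BZ_{p}\times\BZ_{p}$ acting on the ordinary $p$-divisible group through its connected and étale parts, and a lift admits CM by $\cO_{K}$ if and only if its Serre--Tate coordinate $q$ satisfies $q^{\alpha_{\fl}-\alpha_{\overline{\fl}}}=1$ for every $\alpha\in\cO_{K}$, where $\alpha\mapsto(\alpha_{\fl},\alpha_{\overline{\fl}})$ is the induced embedding $\cO_{K}\hookrightarrow\BZ_{p}\times\BZ_{p}$. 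Applying this to the coordinate $q_{\sigma}\in 1+\fm_{W}$ of $x_{\sigma}$ and choosing $\alpha\in\cO_{K}$ purely imaginary with $\alpha_{\fl}\in\BZ_{p}^{\times}$ (possible since $p\nmid D_{K}$, as $p$ splits in $K$), one has $\alpha_{\fl}-\alpha_{\overline{\fl}}=2\alpha_{\fl}\in\BZ_{p}^{\times}$ because $p$ is odd; as $1+\fm_{W}$ is a torsion-free $\BZ_{p}$-module, this forces $q_{\sigma}=1$. Hence $x_{\sigma}$ is the canonical lift of $x_{\sigma/\BF}$, and likewise for $\tau$.

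To conclude I would invoke the functoriality of the canonical lift on ordinary abelian varieties equipped with prime-to-$p$ level structure: from $x_{\sigma/\BF}=x_{\tau/\BF}$ their canonical lifts coincide in $X_{U/W}$, hence $x_{\sigma}=x_{\tau}$ in $X_{U/\overline{\BQ}}$, i.e. $\varphi_{K}(\sigma)=\varphi_{K}(\tau)$. Finally $\varphi_{K}$ is injective, since by Shimura's reciprocity law the CM points on $X_{U}$ with complex multiplication by $\cO_{K}$ of the fixed local type form a principal homogeneous space under $\Cl_{K}$ (compare Lemma~\ref{existence2}), using $\cO_{K}^{\times}=\{\pm1\}$ from Definition~\ref{Theta}(iv) together with the optimality condition there and (L2); thus $\sigma=\tau$. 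The main obstacle is the Serre--Tate step of the middle paragraph: for a possibly nonsplit $B$ one must check that the deformations of $x_{\sigma/\BF}$ inside the Shimura curve form a one-dimensional formal torus on which the $p$-split order $\cO_{K}$ acts as described, so that the complex-multiplication constraint really pins the Serre--Tate coordinate to the origin; the functoriality of the canonical lift and the characteristic-zero injectivity of $\varphi_{K}$ are standard.
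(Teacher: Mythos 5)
Your argument is correct and follows essentially the same route as the paper's: under (ord) each CM point over $W$ is the canonical (Serre--Tate) lift of its reduction, so reduction is injective on these points, and one concludes via the injectivity of $\varphi_{K}$ in characteristic zero. The paper simply cites Hida's categorical equivalence between ordinary CM elliptic curves over $W$ and ordinary elliptic curves over $\BF$ where you carry out the Serre--Tate coordinate computation explicitly, and, like you, it only treats the split case of $B$ in detail.
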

\begin{proof}
For simplicity, let us consider the case when the quaternion $B$ is split.

The condition (ord) guarantees a categorical equivalence between
ordinary CM elliptic curves over the Witt ring $W$
and ordinary elliptic curves over $\BF$ (\cite[\S6.3]{Hi5}).
The equivalence is via the canonical lift in the deformation space over $W$.

For $\sigma\in \Cl_{K}$, recall that the CM point $x_{\sigma}$ is defined over $\mathcal{W}$, in particular over $W$.
It is thus the canonical lift of $x_{\sigma/\BF}$. This finishes the proof.

\end{proof}

\subsubsection{Igusa tower} We describe generalities on the Igusa tower. 

Let $U$ be a sufficiently small level prime to $p$ as in \S3.2.1 and $X_{U/\BZ_{(p)}}$ corresponding Shimura curve. 
Let $X_{U/W}^{\ord}$ be the $p$-ordinary locus.
Adding $p^\infty$-level structure to the moduli problem corresponding to the Shimura curve on the ordinary locus, we obtain an \'{e}tale Igusa tower
$$
\pi: Ig \rightarrow X_{U/W}^{\ord}
$$
(\cite[\S2.5]{Bu}). Here we only mention that this amounts to trivialise the connected component $\mathcal{A}_{U}[p^{\infty}]^{\circ}$ or equivalently the \'etale quotient $\mathcal{A}_{U}[p^{\infty}]^{et}$ of the $p$-divisible group $\mathcal{A}_{U}[p^{\infty}]$. 

For an imaginary quadratic field $K$ satisfying (ord), recall that $x_{\sigma} \in X_{U/W}^{\ord}$ for $\sigma \in \Cl_{K}$. 
In view of (ord), the $\fp_{K}^\infty$-divisible subgroup of the $p$-divisible group corresponding to $x_{\sigma}$ gives rise to a canonical point in the Igusa tower above $x_{\sigma}$. Here $\fp_K$ denotes the prime of $K$ above $p$ determined via the embedding $\iota_p$. 
By abuse of notation, let $x_{\sigma}$ also denote a point in the Igusa tower lying over the CM point.

For a $W$-algebra $B$, recall that a $p$-adic modular form $g$ over $B$ with tame level $U$ is given by
$$
g \in H^{0}(Ig_{/B},\mathcal{O}_{Ig/B}).
$$
Here we mention that the geometric theory of $p$-adic modular forms plays an essential role in the $p$-indivisibility of Heegner points. 

Regarding non-constancy of a mod $p$ modular form on the Igusa tower, we have the following immediate consequence of Proposition \ref{CM} and the Brauer-Siegel lower bound for the size of class groups.
\begin{cor}
\label{bound}
Let $g$ be a non-constant $p$-adic modular form over $\BF$ with weight zero and tame level $U$. For $c \in \BF$,
the size of the CM fiber
$$
 \left\{\ x_{\sigma} \ \Big|  \sigma \in \Cl_{K}, g(x_{\sigma})=c\right\}
$$
is bounded independent of $K$.
\end{cor}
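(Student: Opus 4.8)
The plan is to reduce the statement to the elementary fact that a non-constant regular function on a smooth irreducible affine curve has finite fibres of uniformly bounded cardinality, and then to feed in the injectivity of CM-point reduction furnished by Proposition \ref{CM}. The Brauer--Siegel bound enters only to record that $h_K\to\infty$, which is what makes such a bound a genuine constraint downstream.

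First I would descend $g$ to a finite layer of the Igusa tower. Since the model $X_{U/\BZ_{(p)}}$ is smooth (Theorem \ref{Sh}) and the ordinary locus $X_{U/\BF}^{\ord}$ is the complement of the finitely many supersingular points in a projective curve, $X_{U/\BF}^{\ord}$ is a smooth affine curve; hence each finite layer $Ig_{n/\BF}$ is again a smooth affine curve, being finite \'etale over $X_{U/\BF}^{\ord}$. Consequently $H^0(Ig_{/\BF},\CO_{Ig/\BF})=\varinjlim_n H^0(Ig_{n/\BF},\CO)$, so the weight zero $p$-adic modular form $g$ already lies in $H^0(Ig_{n/\BF},\CO)$ for some $n\geq 0$. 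This is the standard ``every mod $p$ modular form has finite level'' fact and uses only affineness of the ordinary locus.

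Next I would invoke the irreducibility of the Igusa tower (Igusa's theorem, cf.\ \cite{Hi5}) over each geometric component of $X_{U/\BF}^{\ord}$, together with the fact that the CM points $x_{\sigma/\BF}$ ($\sigma\in\Cl_K$) all lie on a single such component $C\subseteq Ig_{n/\BF}$ on which $g$ is non-constant. Then $g|_C$ extends to a finite morphism $\overline{C}\to\BP^1_{\BF}$ of smooth compactifications of degree $d:=\deg(g|_C)$, so every fibre of $g|_C$ — in particular $g^{-1}(c)\cap C$ — has at most $d$ points, and $d$ depends only on $g$ (i.e.\ only on $U$ and $n$), not on $K$. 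I expect this step to be the main obstacle: pinning down that ``$g$ non-constant'' forces finite, uniformly bounded fibres, which is precisely where irreducibility of the Igusa variety (rather than mere connectedness of $X_U$) and the location of the CM points on the component where $g$ is non-constant are needed; this last point is the analogue of Lemma \ref{non-constancy} in the present geometric setting.

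Finally I would combine this with CM theory. For $K$ with $p$ split, the CM points $x_\sigma$ lift canonically to $Ig$ and reduce to points $x_{\sigma/\BF}\in Ig_{n/\BF}$, and $g(x_\sigma)$ denotes $g(x_{\sigma/\BF})$ since $g$ is $\BF$-valued. Proposition \ref{CM} gives that the reductions of the CM points on $X_{U/\BF}$ are pairwise distinct; composing with the projection $Ig_{n/\BF}\to X_{U/\BF}^{\ord}$, the $x_{\sigma/\BF}$ are pairwise distinct in $Ig_{n/\BF}$ as well, so $\sigma\mapsto x_{\sigma/\BF}$ is injective. It therefore restricts to an injection $\{\sigma\in\Cl_K : g(x_{\sigma/\BF})=c\}\hookrightarrow g^{-1}(c)\cap C$, whence the CM fibre has at most $d$ elements, a bound independent of $K$. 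Since $h_K=\#\Cl_K\to\infty$ as $|D_K|\to\infty$ by the Brauer--Siegel lower bound, the same bound shows the CM fibre is a vanishing proportion of $\Cl_K$, and likewise of any subgroup $H_K\subseteq\Cl_K$ of index $\ll|D_K|^{\delta}$, which is the form in which the corollary will be applied.
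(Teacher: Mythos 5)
Your argument is correct and is precisely the ``immediate'' argument the paper has in mind (the paper supplies no proof, citing only Proposition \ref{CM} and Brauer--Siegel): descend $g$ to a finite layer $Ig_{n/\BF}$ of the Igusa tower, bound the fibre of a non-constant regular function on that smooth affine curve by its degree, and inject the CM fibre into it using the injectivity of reduction $\sigma\mapsto x_{\sigma/\BF}$ from Proposition \ref{CM}. Your two side remarks are also apt: the Brauer--Siegel bound is not needed for the statement itself, only for its downstream use, and the one point requiring care is that $g$ be non-constant on the component actually carrying the CM points --- a hypothesis the corollary's statement glosses over but which Lemma \ref{non-constancy1} supplies in the application via the $t$-expansion at a CM point.
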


This corollary plays a key role in the mod $p$ non-vanishing of toric periods of a $p$-adic modular form.

\subsection{Non-vanishing}
In this subsection, we prove the horizontal mod $p$ non-vanishing of toric periods of a $p$-adic modular form on a Shimura curve.

Let the notation and assumptions be as in \S3.1. In particular, $f$ is a weight two modular form on the Shimura curve $X_{U}$ and $f^{(p)}$ the $p$-depletion.
Let $g$ the weight zero $p$-adic modular form given by
 $$
g=d^{-1}( f^{(p)}).
$$

Let $\Theta$ be the set of imaginary quadratic fields as in \S3.1. Let $K \in \Theta$.
For $\sigma \in \Cl_{K}$, let $x_{\sigma} \in Ig_{/W}$ be the corresponding CM point as in \S3.2.4.
For $\chi \in \widehat{\Cl}_{K}$,
let $P_{g}(\chi)$ be the toric period given by
\begin{equation}
P_{g}(\chi)=\frac{1}{h_{K}}\cdot\sum_{\sigma \in \Cl_{K}} \chi(\sigma)^{-1}g(x_{\sigma}).
\end{equation}

Note that $g$ is defined over a finite flat discrete valuation ring $W'$ over the Witt ring $W=W(\BF)$ as a $p$-adic modular form.
Here $\BF$ denotes an algebraic closure of $\BF_p$.
Let $\overline{g}$ denote the reduction of $g$ modulo the maximal ideal of the local ring $W'$.

As $g$ is $p$-integral and $p\nmid h_{K}$, the period $P_{g}(\chi)$ is $p$-integral.
More precisely, $P_{g}(\chi)\in W'[\chi]$ as the underlying CM points are defined over $W$. 
Here $W'[\chi]$ denotes the extension of $W'$ obtained by adjoining the values of $\chi$.
Note that the period
possibly depends on the choice of the torus embedding $\iota_K$.

\begin{remark}
(1). For $\chi \in \widehat{\Cl}_{K}$, the non-vanishing of $P_{g}(\chi)$ implies that $\chi \in \fX_{K}$.
This follows from $g$ being $\chi_{0,S}$-toric (part (1) of Lemma \ref{non-constancy1}).
The observation will be used in the proof of Theorem \ref{toric2}. 

(2). We restrict to newforms with trivial central character and unramified Hecke characters over the imaginary quadratic fields for simplicity. 
\end{remark}

Let $\fp_{0}$ be the prime above $p$ in the Hecke field $L_\phi$ corresponding to the newform determined via the embedding $\iota_p$. 
Let $\rho_{\phi,\fp_{0}}: G_{\BQ} \ra \GL_{2}(L_{\phi,\fp_{0}})$ be the corresponding $p$-adic Galois representation. 
Let $\overline{\rho}_{\phi,\fp_{0}}$ be the reduction modulo $\fp_{0}$.

We have the following lemma regarding $g$.
\begin{lem}
\label{non-constancy1}
Let the notation and assumptions be as above.
\begin{itemize}
\item[(1).] The $p$-adic modular form $g$ is $\chi_{0,S}^{-1}$-invariant under the action of $K_{0,S}^{\times}$.
\item[(2).] Assume that the mod $p$ Galois representation $\overline{\rho}_{\phi,\fp_{0}}$
associated to the newform $\phi$ is absolutely irreducible. Then the weight zero $p$-adic modular form $g$ is non-constant modulo $p$.
\end{itemize}
\end{lem}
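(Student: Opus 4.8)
The plan is to treat the two parts separately, by rather different arguments. Part~(1) is a formal statement local at $S$, and Part~(2) --- the substantive one --- follows, in the spirit of Lemma~\ref{non-constancy}, from an analysis of the prime-to-$Np$ Hecke eigensystem of $\overline{g}$ together with the hypothesis on $\overline{\rho}_{\phi,\fp_{0}}$.

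\textbf{Part (1).} Since $p\nmid N$, the prime $p$ is not in $S$. Both operations producing $g$ from $f$ --- the $p$-depletion $f\mapsto f^{(p)}$ and the inverse Katz operator $h\mapsto d^{-1}h$ --- only modify a $p$-adic modular form along the $p$-divisible group of the universal abelian scheme, equivalently along the $p^{\infty}$-level structure trivialised on the Igusa tower; hence they commute with the action of $K_{0,S}^{\times}\subset B_{S}^{\times}$, and the equivariance of $f$ at the places of $S$ is inherited by $g$. By (F1) the subgroup $U_{0,S}$ acts on $f$ through $\chi_{0,S}$; at a non-split $v\in S$ one has $U_{0,v}=K_{0,v}^{\times}$ already, while at a split $v$ the one-dimensionality of the $\chi_{0,v}$-toric line in $\pi_{B,v}$ forces the action of $\cO_{K_{0,v}}^{\times}$ to extend uniquely to all of $K_{0,v}^{\times}$. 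Passing from the automorphic normalisation of $f$ to the geometric action on the weight-zero form $g$ replaces the character by its contragredient, which accounts for $\chi_{0,S}^{-1}$ rather than $\chi_{0,S}$; the remaining verifications are routine bookkeeping with the conventions of \S3.1--\S3.2.

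\textbf{Part (2).} Assume for contradiction that $\overline{g}$ is constant. First, $g$ is a Hecke eigenform away from $Np$: for every prime $\ell\nmid Np$ one has $T_{\ell}f^{(p)}=a_{\ell}f^{(p)}$ with $a_{\ell}$ the $T_{\ell}$-eigenvalue of $\phi$, and since $d^{-1}$ lowers the weight by two it multiplies the $T_{\ell}$-eigenvalue by $\ell^{-1}$; thus $T_{\ell}g=(a_{\ell}/\ell)\,g$, and reducing modulo $\fp$, $T_{\ell}\overline{g}=\overline{a_{\ell}}\,\overline{\ell}^{\,-1}\,\overline{g}$. Next, $\overline{g}\neq 0$: as $f$ is new of level prime to $p$ one has $f^{(p)}\neq 0$, $d^{-1}$ is injective on forms with no $p$-power expansion coefficients, and the $\fp$-primitivity of $f$ persists under these operations. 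Hence $\overline{g}$ is a nonzero weight-zero constant, and therefore a Hecke eigenform with $T_{\ell}$-eigenvalue $1+\ell^{-1}$ for every $\ell\nmid Np$. Comparing the two expressions gives $\overline{a_{\ell}}\equiv \ell+1\pmod{\fp}$ for all $\ell\nmid Np$; by the Chebotarev density theorem and the Brauer--Nesbitt theorem, the semisimplification of $\overline{\rho}_{\phi,\fp_{0}}$ is then $\mathbf{1}\oplus\overline{\chi}_{\mathrm{cyc}}$, with $\overline{\chi}_{\mathrm{cyc}}$ the mod $p$ cyclotomic character. In particular $\overline{\rho}_{\phi,\fp_{0}}$ is not absolutely irreducible, contradicting the hypothesis; hence $\overline{g}$ is non-constant.

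The essentially routine part is (1). The main obstacle lies in the two inputs of (2) for which, when $B$ is non-split, there is no $q$-expansion available since the Shimura curve has no cusps: (i) that $\overline{g}$ is genuinely nonzero, i.e. that $\fp$-primitivity of $f$ survives the $p$-depletion and $d^{-1}$, which I would obtain from the non-vanishing of the Serre--Tate (canonical-coordinate) expansion of $\overline{f}$ at an ordinary CM point, serving as the $q$-expansion-principle substitute on $Ig$ (cf.\ \cite[Ch.~7]{Hi5}); and (ii) that the prime-to-$Np$ Hecke operators act on $p$-adic modular forms on $Ig$ with the constant function as a weight-zero eigenform of $T_{\ell}$-eigenvalue $1+\ell^{-1}$, and that $d^{-1}$ twists the eigensystem as claimed --- both being part of the standard Hida-theoretic formalism for Shimura curves, which I would cite rather than reprove. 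Granting these, the displayed argument completes the proof, and for split $B$ one may alternatively run the whole of (2) with the classical $q$-expansion at $\infty$.
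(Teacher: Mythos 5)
Your part (1) follows the paper's argument. For part (2) you take a genuinely different route: the paper reduces everything to the non-constancy of $\overline{f^{(p)}}$ (quoted from \cite[Prop.~5.3]{Bu}, which is where the irreducibility hypothesis is consumed) and then observes that, since $d$ acts as $t\frac{d}{dt}$ on Serre--Tate expansions, $d$ annihilates constants, so a constant $\overline{g}$ would force $\overline{f^{(p)}}=d\overline{g}=0$. Your eigensystem comparison ($T_\ell g=(a_\ell/\ell)g$ versus $T_\ell(1)=(1+\ell^{-1})\cdot 1$, hence $\overline{a_\ell}\equiv \ell+1$, hence an Eisenstein residual eigensystem) is correct and is the exact analogue of Lemma \ref{non-constancy} in the definite case; but note that it is logically redundant here: once you know $\overline{g}\neq 0$ via $d\overline{g}=\overline{f^{(p)}}\neq 0$, the same identity already excludes nonzero constants, because $d$ kills them. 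So the eigensystem argument buys nothing that the relation $dg=f^{(p)}$ does not already give, at the cost of extra Hecke-theoretic inputs on the Igusa tower.

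The genuine gap is in the one step you correctly flag as the main obstacle, namely $\overline{f^{(p)}}\neq 0$ (equivalently $\overline{g}\neq 0$) when $B$ is non-split. The Serre--Tate expansion principle applied to $\overline{f}$ only gives that the expansion $\sum_n a_n t^n$ of $\overline{f}$ at an ordinary CM point is nonzero; the $p$-depletion kills exactly the coefficients with $p\mid n$, so what you need is the stronger statement that some $a_n$ with $p\nmid n$ is nonzero, i.e.\ that $\overline{f}$ is not Frobenius-induced (not in the image of $V_p$) on the Igusa tower. On the modular curve this is free from $a_1=1$, but on a Shimura curve it is not, and ruling it out is precisely where the irreducibility of $\overline{\rho}_{\phi,\fp_{0}}$ is used in \cite[Prop.~5.3]{Bu} --- a point at which your proposal does not invoke the hypothesis at all (you only use irreducibility in the final Eisenstein step). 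As written, ``the $\fp$-primitivity of $f$ persists under these operations'' is therefore unjustified; replacing it by a citation of \cite[Prop.~5.3]{Bu} (or reproducing its argument) closes the proof, at which point the short differential-operator argument of the paper already yields the conclusion.
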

\begin{proof}
For the first part, note that the $p$-depletion only changes the Hecke action at $p$. In particular, the action of $K_{0,S}$ is unchanged. As
$$
d^{-1}(f^{(p)})=\lim_{j\rightarrow 0}d^{-1+j}(f^{(p)})
$$
and each $d^{-1+j}(f^{(p)})$ is $\chi_{0,S}^{-1}$-invariant under $K_{0,S}^{\times}$, this finishes the proof.

For the second part, we first recall the non-constancy of the $p$-depletion $f^{(p)}$ under irreducibility
of the underlying mod $p$ Galois representation (\cite[Prop. 5.3]{Bu}). Recall that the $p$-adic differential operator acts as $t\frac{d}{dt}$ on the $t$-expansion of a $p$-adic modular form around a CM point. The non-constancy of $d^{-1}(f^{(p)})$ thus follows. Here $t$ denotes the Serre--Tate co-ordinate of the deformation space around the CM point (\cite[\S3.1]{Bu}).
\end{proof}

Our main result regarding the mod $p$ non-vanishing of toric periods is the following.
\begin{thm}
\label{toric2}
Let $\phi \in S_{2}(\Gamma_{0}(N))$ be a newform.
Let $p\nmid N$ be a prime such that
\begin{itemize}
\item[(irr)] the mod $p$ Galois representation $\ov{\rho}_{\phi, \fp}$ is absolutely irreducible.
\end{itemize}
Let $f$ be a toric test vector on an indefinite quaternion algebra as above,
$f^{(p)}$ the corresponding $p$-depletion and $g=d^{-1}(f^{(p)})$
with $d$ being the Katz $p$-adic differential operator.
Let $\Theta=\Theta_S$ be the set of imaginary quadratic fields as above.
For $K \in \Theta$, let $\fX_K$ be the set of finite order Hecke character over $K$ as in Definition \ref{char}. For $\chi \in \fX_K$, let $P_{g}(\chi)$ be the toric period corresponding to the pair $(g,\chi)$. 

For $\epsilon > 0$, we have
$$
 \#\left\{\chi\in\fX_{K}\ \Big|\ v_{p}(P_{g}(\chi))=0 \right\}
\gg_{\epsilon} \log(|D_{K}|)^{1-\epsilon}.
 $$
 as $K \in \Theta$ varies over imaginary quadratic fields with
 \begin{itemize}
\item[(ord)] $p$ split.
\end{itemize}
\end{thm}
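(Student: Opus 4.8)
The plan is to follow the template of the proof of Theorem~\ref{toric1}, making a single substitution: Michel's equidistribution on the finite Shimura set is replaced by the rigidity of CM points on the Igusa tower modulo $p$ recorded in Proposition~\ref{CM} and Corollary~\ref{bound}. No subconvex bound is needed, and the geometric input is in fact stronger than in the definite case; the elementary Lemma~\ref{stability} remains the sole bottleneck forcing a merely logarithmic lower bound. Throughout one restricts to $K\in\Theta$ satisfying (ord), which is what guarantees, via Serre--Tate theory, that the CM points $x_{\sigma}$ are $p$-ordinary and lift canonically to points of the Igusa tower (\S3.2.4).

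First I would pass from toric periods to Fourier coefficients on $\Cl_{K}$. Since $p\nmid h_{K}$, reduction modulo the prime of $W'[\chi]$ above $p$ determined by $\iota_{p}$ commutes with the factor $1/h_{K}$ in the definition of $P_{g}(\chi)$, so $v_{p}(P_{g}(\chi))=0$ if and only if the corresponding Fourier coefficient of the function $\psi_{K}\colon\Cl_{K}\to\BF$, $\sigma\mapsto\ov g(x_{\sigma})$, is non-zero; here $\ov g$ is the reduction of $g$ on $Ig_{/\BF}$, $x_{\sigma}$ is the canonical lift of the CM point to the Igusa tower, and Fourier inversion on $\Cl_{K}$ is valid because $p\nmid h_{K}$. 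By part (1) of Lemma~\ref{non-constancy1} the form $g$ is $\chi_{0,S}^{-1}$-toric, so any $\chi$ contributing a non-zero Fourier coefficient automatically lies in $\fX_{K}$; hence it suffices to bound below $\ell_{K}:=\#\{\chi\in\wh{\Cl}_{K}\mid\widehat{\psi_{K}}(\chi)\neq 0\}$.

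Next, let $\Xi_{K}\subset\wh{\Cl}_{K}$ be the set of the $\ell_{K}$ non-vanishing twists, $C_{K}=\langle\Xi_{K}\rangle$, and $H_{K}\subset\Cl_{K}$ its orthogonal complement, so that $\psi_{K}$ factors through $\Cl_{K}/H_{K}$. Under (irr), $\ov g$ is non-constant on $Ig_{/\BF}$ by part (2) of Lemma~\ref{non-constancy1}; consequently, for $|D_{K}|\gg 0$, the function $\psi_{K}$ is not identically zero, for otherwise the fiber $\{x_{\sigma}\mid\ov g(x_{\sigma})=0\}$ would have $h_{K}\to\infty$ elements, contradicting Corollary~\ref{bound}. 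More generally every non-empty fiber of $\psi_{K}$ is a union of cosets of $H_{K}$, hence has at least $|H_{K}|$ elements, so Corollary~\ref{bound} forces $|H_{K}|=O(1)$ and therefore $|C_{K}|=h_{K}/|H_{K}|\gg h_{K}$. It remains to check that $\Xi_{K}$ is stable under $\Gal(\BF/\BF_{q_{0}})$ for the fixed finite field $\BF_{q_{0}}=\BF_{p}(\ov\phi,\ov\chi_{0,S})$. In contrast with the definite case, $\ov g(x_{\sigma})$ need not lie in $\BF_{q_{0}}$; instead one invokes Shimura's reciprocity law on the Igusa tower, by which Frobenius permutes the CM fiber, carrying $x_{\sigma/\BF}$ to $x_{\sigma\cdot a/\BF}$ for an element $a\in\Cl_{K}$ given by a power of the class of the prime of $K$ above $p$. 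This yields $\psi_{K}(\sigma)^{\tau}=\psi_{K}(\sigma\cdot a_{\tau})$ for $\tau\in\Gal(\BF/\BF_{q_{0}})$ and, after re-indexing the Fourier sum, $\ov{P_{g}(\chi)}^{\tau}=\ov{\chi^{\tau}}(a_{\tau})\cdot\ov{P_{g}(\chi^{\tau})}$ with $\ov{\chi^{\tau}}(a_{\tau})\neq 0$ (again because $p\nmid h_{K}$), whence $\ov{P_{g}(\chi)}\neq 0\iff\ov{P_{g}(\chi^{\tau})}\neq 0$. Applying Lemma~\ref{stability} to the sequence $(\Cl_{K},\Xi_{K})$ with $\BF_{q}=\BF_{q_{0}}$ --- using $p\nmid h_{K}$, $h_{K}\to\infty$, and $|C_{K}|\gg h_{K}\geq h_{K}^{\delta}$ for any fixed $\delta<1$ --- gives $\ell_{K}\gg_{\epsilon}\log(h_{K})^{1-\epsilon}$, and $\log h_{K}\asymp\log|D_{K}|$ by Brauer--Siegel.

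The one step I expect to require genuine care is the Galois-equivariance of $\Xi_{K}$ just described: because $\ov g(x_{\sigma})$ fails to lie in a finite field fixed independently of $K$, one cannot conclude as in the definite case and must use Shimura's reciprocity law to turn the Frobenius action on the CM fiber into a translation on $\Cl_{K}$, then verify that the accompanying twist is a $p$-adic unit. The remainder is formally identical to the proof of Theorem~\ref{toric1}; the substantive geometric content sits in Proposition~\ref{CM}/Corollary~\ref{bound} (injectivity of the mod $p$ reduction of CM points, from Serre--Tate deformation theory and (ord)) and in Lemma~\ref{non-constancy1}.
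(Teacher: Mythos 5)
Your proposal is correct and follows essentially the same route as the paper's proof: Fourier analysis on $\Cl_{K}$, the injectivity of mod~$p$ reduction of CM points (Proposition~\ref{CM}) combined with Corollary~\ref{bound} to force $|H_{K}|=O(1)$, Galois-stability of the set of non-vanishing twists via Shimura reciprocity, and then Lemma~\ref{stability}. Your elaboration of the Galois-stability step --- noting that $\ov g(x_{\sigma})$ need not lie in a fixed finite field and that Frobenius acts on the CM fiber by a translation on $\Cl_{K}$, contributing a unit twist to the Fourier coefficient --- supplies detail the paper leaves implicit under the citation of Shimura's reciprocity law.
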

\begin{proof}
Recall that $g$ is a function on the Igusa tower $Ig$ defined over $W'$ and $\overline{g}$ its  reduction modulo the maximal ideal of $W'$.
Note that $\overline{g}$ is a function on the mod $p$ Igusa tower $Ig_{\BF}$.
In view of the map
$\varphi_{K}:\Cl_{K}\rightarrow Ig$, we may regard $g$ and $\overline{g}$ as functions on the abelian group $\Cl_{K}$.

The approach is based on the Fourier analysis on $\Cl_{K}$ and its relation to the mod $p$ Igusa tower $Ig_{\BF}$.


To begin with, the toric periods $P_{g}(\chi)$ modulo $\fm_{\chi}$ equal the $\chi$-th Fourier coefficients of the function
$$\sigma \mapsto g(x_{\sigma}) \mod{\fm'}$$ on the finite abelian group $\Cl_{K}$.
Here $\fm_{\chi}$ (resp. $\fm'$) is the maximal ideal of $W'[\chi]$ (resp. $W'$).
Here $W'$ denotes a finite flat extension of $W$ and $W'[\chi]$ its extension obtained by adjoining the values of $\chi$ as above.
Let $\fm$ denote the maximal ideal of $W$.

In what follows, we thus consider Fourier analysis of the function
$$\overline{g}:\Cl_{K}\rightarrow \BF$$
and its variation for $K \in \Theta$.
As $p \nmid h_{K}$, the Fourier inversion works as usual.

Let us first consider the non-vanishing of at least one twist.
In view of the Fourier inversion, it suffices to show that
\begin{equation}
\label{pCM1}
\overline{g}(x_{\sigma})\neq 0.
\end{equation}
for at least one $\sigma \in \Cl_{K}$ for all but finitely many $K \in \Theta$. By abuse of notation,  $x_{\sigma}$ denotes the reduction of the CM point $x_{\sigma}$ modulo $\fm$.

In view of Proposition \ref{CM} and the Brauer--Siegel lower bound, the number of CM points
$$
\left\{\ x_{\sigma} \ \Big|  \sigma \in \Pic_{K,c} \right\} \subset Ig_{\BF}
$$
increases with the discriminant of $K$.
Thus, the $p$-primitivity of $g$ readily implies \ref{pCM1} (part (2) of Lemma \ref{non-constancy1}).

We now consider the growth in the number of non-vanishing twists.
Suppose that there exists a positive integer $\ell_{K}$ such that exactly $\ell_{K} -1$ of the toric periods
$P_{g}(\chi)$
are non-vanishing modulo $\fm_{\chi}$, for $K$ with sufficiently large discriminant.
Let
$$
\Xi_{K}=\{ \chi_{K,1}, ..., \chi_{K,\ell_{K}-1}\}
$$
 be the set of the non-vanishing modulo $\fm_\chi$ twists.

From Shimura's reciprocity law,
$$
\overline{P_{g}(\chi)} \neq 0 \iff \overline{P_{g}(\chi^{\sigma})}\neq 0
$$
for $\sigma \in \Gal(\BF/\BF_{p}(\overline{\phi},\overline{\chi_{0,S}}))$. 
Here $``\ov{\cdot}"$ denotes the reduction modulo a prime ideal above $p$ determined via the embedding $\iota_p$. 
We conclude that $\Xi_{K}$ is invariant under the action of
$\Gal(\BF/\BF_{p}(\overline{\phi},\overline{\chi_{0,S}}))$.

Let $C_{K} \subset \widehat{\Cl}_{K}$ the subgroup generated by these characters and let $H_{K} \subset \Cl_{K}$ be the orthogonal complement of $C_{K}$. The orthogonal complement is with respect to the pairing on $\widehat{\Cl}_{K}$ and its character group.
Recall that $|H_{K}|=|\Cl_{K}|/|C_{K}|$.

In view of the Fourier inversion on $\Cl_{K}$ and the hypothesis on the periods, we have
\begin{equation}
\overline{g}(x_{\sigma})=\sum_{i=1}^{\ell_{K}-1} c_{K,i}\chi_{K, i}(\sigma).
\end{equation}
Here $c_{K,i}\in \BF$ are the Fourier coefficients possibly dependent on K and determined by
the pairs $(\overline{g},\chi_{K,i})$.

From the above formula on $H_{K}$, it follows that $g$ is constant on $H_{K}$.
In view of part (2) of \ref{non-constancy1} and Corollary \ref{bound}, there exists a constant $c$ such that
$$
|H_{K}| < c
$$
as $|D_{K}|\ra \infty$.

In particular,
$$|C_{K}| \gg_{\epsilon'} |D_{K}|^{\frac{1}{2}-\epsilon'}$$
for $\epsilon'>0$.

From the Galois-stability of $\Xi_{K}$ and Lemma \ref{stability}, we conclude
$$
\ell_{K}\gg_{\epsilon} \log(|D_{K}|)^{1-\epsilon}
$$
for any $\epsilon > 0$.

This finishes the proof.
\end{proof}
\noindent
\begin{remark}

Under the assumptions in Theorem \ref{toric2}, it seems tempting to predict
$$
 \#\left\{\chi\in\fX_{K}\ \Big|\ v_{p}(P_{g}(\chi))=0 \right\}
\gg_{\epsilon} |\Cl_{K}|^{1-\epsilon}
 $$
 for any $\epsilon > 0$.
 \end{remark}

\subsection{$p$-indivisibility of Heegner points}
In this subsection, we prove the horizontal $p$-indivisibility of Heegner points based on the horizontal mod $p$ non-vanishing of toric periods.

Let the notation and assumptions be as in \S3.1. In particular, we have modular parametrisation $f: X_{U} \rightarrow A_{\phi}\otimes_{\cO_{\phi}} \cO$ for the Shimura curve $X_{U}$ and the abelian variety $A_{\phi}$ associated to a  newform $\phi \in S_{2}(\Gamma_{0}(N))$. 
For choice of a base point for Albanese map of the Shimura curve $X_{U}$ into its Jacobian, we follow 
\cite[Rem. 6]{Z}. Here we only remark that existence of such a normalisation follows from maximality of the $p$-adic Galois representation ${\rho}_{\phi, \fp}$ 
associated to $\phi$. 

In what follows, we choose the parametrisation $f$ to be $(\cO_{\phi},\fp_{\phi})$-optimal as in \cite[\S3.7]{Z}. Here $\fp_\phi$ denotes the prime of $\cO_\phi$ above $p$ arising from the embedding $\iota_p$.

Recall, $\Theta$ denotes the set of imaginary quadratic fields in \S3.1.
For $K \in \Theta$, $\fX_K$ denotes the set of finite order Hecke character over $K$ as in Definition \ref{char}.
For $\sigma \in \Cl_K$, we have the same CM point $x_{\sigma} \in X_{U/\ov{\BQ}}$ (\S3.2.3).
 Let $\chi \in \fX_K$.
Associated to the pair $(f,\chi)$, we have Heegner point $P_{f}(\chi)$ given by

\begin{equation}
P_{f}(\chi):=\sum_{\sigma\in \Cl_{K}} \chi(\sigma)^{-1}f(x_{\sigma})
\end{equation}


We have the following immediate.
\begin{lem}
\label{rationality}
Let the notation and assumptions be as above. Then,
$$
P_{f}(\chi) \in A(H_{\chi})
$$
for the extension $H_{\chi}/K$ cut out by $\chi$ and $A=A_{\phi}\otimes_{\cO_{\phi}} \cO$.
\end{lem}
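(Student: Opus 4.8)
The plan is to exploit Shimura's reciprocity law to compute the action of $\Gal(H_K/K)\cong\Cl_K$ on the point $P_f(\chi)$, show that it transforms through the character $\chi$, and conclude that $P_f(\chi)$ is fixed by $\ker(\chi)$ and hence rational over the subfield $H_\chi$ it cuts out.

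First I would recall, from the theory of complex multiplication for Shimura curves summarised in \S3.2.3 (cf. \cite[\S2.4]{Bu}), that because $\iota_K(\cO_K)\subset U$ the CM point $x_\sigma\in X_{U/\ov\BQ}$ attached to $\sigma\in\Cl_K$ is in fact defined over the Hilbert class field $H_K$, and that Shimura's reciprocity law describes the Galois action explicitly: if $\tau_\gamma\in\Gal(H_K/K)$ corresponds to $\gamma\in\Cl_K$ under Artin reciprocity, then $x_\sigma^{\tau_\gamma}=x_{\gamma\sigma}$ for the normalisation of $\varphi_K$ fixed in \S3.2.3. Since the modular parametrisation $f\colon X_U\to A$ has been chosen $(\cO_\phi,\fp_\phi)$-optimal and normalised as in \cite[Rem. 6, \S3.7]{Z} --- which is exactly where maximality of $\rho_{\phi,\fp}$ enters --- it is defined over $\BQ$, hence $\Gal(\ov\BQ/K)$-equivariant, so $f(x_\sigma)^{\tau_\gamma}=f\bigl(x_\sigma^{\tau_\gamma}\bigr)=f(x_{\gamma\sigma})\in A(H_K)$.

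Next comes the computation. Regarding $P_f(\chi)=\sum_{\sigma\in\Cl_K}\chi(\sigma)^{-1}f(x_\sigma)$ as a point of $A(H_K)$, where the $\cO$-module structure on the Serre tensor $A=A_\phi\otimes_{\cO_\phi}\cO$ (enlarged if necessary so as to contain the values of $\chi$, as is already done for $P_f(\chi)$ throughout) makes sense of the scalars $\chi(\sigma)^{-1}$, one gets for each $\tau_\gamma$:
\[
P_f(\chi)^{\tau_\gamma}=\sum_{\sigma}\chi(\sigma)^{-1}f(x_{\gamma\sigma})=\sum_{\sigma'}\chi(\gamma^{-1}\sigma')^{-1}f(x_{\sigma'})=\chi(\gamma)\,P_f(\chi),
\]
after the substitution $\sigma'=\gamma\sigma$ and use of $\cO$-linearity. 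In particular $P_f(\chi)^{\tau_\gamma}=P_f(\chi)$ for every $\gamma\in\ker(\chi)$; since $H_\chi$ is by definition the fixed field in $H_K$ of $\ker\bigl(\chi\colon\Gal(H_K/K)\to\cO^\times\bigr)$, this yields $P_f(\chi)\in A(H_\chi)$. (If the reciprocity law is normalised so that $x_\sigma^{\tau_\gamma}=x_{\gamma^{-1}\sigma}$ instead, the same computation gives $P_f(\chi)^{\tau_\gamma}=\chi(\gamma)^{-1}P_f(\chi)$, and the conclusion is unchanged.)

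The lemma is essentially formal, so there is no serious obstacle; the only points that need genuine care are (i) fixing the precise form of Shimura's reciprocity law so that the translation action of $\Cl_K$ on the CM points matches the Galois action with the constants above --- although, as noted, the conclusion is insensitive to the sign --- and (ii) confirming that the normalised parametrisation $f$ is truly rational over $\BQ$, with no spurious field of definition, which is precisely what the optimal choice following \cite{Z} (and hence maximality of $\rho_{\phi,\fp}$) guarantees. The remaining bookkeeping point is to legitimise the $\cO$-linear combination defining $P_f(\chi)$ by passing to the Serre tensor of $A$ over a ring containing the values of $\chi$.
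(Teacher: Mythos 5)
Your argument is correct and is exactly the intended one: the paper states this lemma as ``immediate'' without proof, and the standard Shimura-reciprocity computation $P_f(\chi)^{\tau_\gamma}=\chi(\gamma)P_f(\chi)$, together with rationality of the parametrisation and of the $\cO$-action, is what is being taken for granted. Your two caveats (normalisation of the reciprocity law, and enlarging $\cO$ to contain the values of $\chi$ so that the linear combination lives in the Serre tensor) are the right bookkeeping points and do not affect the conclusion.
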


Fundamental $p$-adic invariants associated to a Heegner point are its $p$-adic height and $p$-adic logarithm. 
Regarding the $p$-adic logarithm of the Heegner point, we have the following fundamental $p$-adic Waldspurger formula due to Bertolini--Darmon--Prasanna (\cite{BDP1}, \cite{BDP2}), Brooks (\cite{Bro}) and Liu--Zhang--Zhang (\cite{LZZ}).
\begin{thm}
\label{log}
Let $\phi \in S_{2}(\Gamma_{0}(N))$ be a newform.
Let $p\nmid N$ be a prime and $a_{p}(\phi)$ the corresponding Hecke eigenvalue.
Let $f$ be a toric test vector on an indefinite quaternion algebra as above and $g=d^{-1}(f^{(p)})$
with $d$ being the Katz $p$-adic differential operator. Let $K/\BQ$ be an imaginary quadratic extension with $p$-split. 
Let $\fp_K$ be the prime above $p$ in $K$ determined via the embedding $\iota_p$ and $\ov{\fp}_K$ its conjugate. 
Let $\chi$ be a finite order unramified Hecke character over $K$ as above.
Let $P_{f}(\chi)$ be the Heegner point corresponding to the pair $(f,\chi)$.

Then, we have
$$ h_{K}\cdot P_{g}(\chi) = (1-\chi^{-1}(\overline{\mathfrak{p}}_{K})p^{-1} a_{p}(\phi)+\chi^{-2}(\overline{\mathfrak{p}}_{K})p^{-1})\log_{\omega_{f}}(P_{f}(\chi))$$
for the $p$-adic logarithm $\log_{\omega_{f}}(\cdot)$.
\end{thm}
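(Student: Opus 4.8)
The plan is to obtain Theorem~\ref{log} from the $p$-adic Waldspurger (equivalently, $p$-adic Gross--Zagier) formulas of Bertolini--Darmon--Prasanna (\cite{BDP1}, \cite{BDP2}), Brooks (\cite{Bro}) and Liu--Zhang--Zhang (\cite{LZZ}); the genuine content is the reconciliation of the normalisations of \S3.1--\S3.2 with those sources, together with the computation of the local factor at $p$. First one records, as in \S3.3, that since $\phi$ has trivial central character and $p\nmid N$ the $p$-depletion $f^{(p)}$ has $q$-expansion supported on indices prime to $p$, so $d^{-1}$ introduces no denominators and $g=d^{-1}(f^{(p)})$ is a genuine weight zero $p$-adic modular form on the Igusa tower $Ig$ over $W'$. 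The geometric input is that on the $p$-ordinary locus $d^{-1}(f^{(p)})$ is the value of a Coleman primitive of $\omega_{f}$, and that at a $p$-ordinary CM point this primitive computes $\log_{\omega_{f}}$ of the corresponding point of $A$: this is the $p$-adic Abel--Jacobi computation of \cite{BDP1} when $B$ is split and of \cite{Bro}, \cite{LZZ} when $B$ is a division algebra. Crucially, because the weight is exactly $2$, the auxiliary $p$-adic period trivialising $H^{1}$ of the CM abelian variety that appears in the higher-weight generalised-Heegner-cycle formalism is trivial here, so the resulting relation between $g(x_{\sigma})$ and $\log_{\omega_{f}}(f(x_{\sigma}))$ is clean and integral.

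I would then pin down the local factor at a single CM point. Writing $V_{p}$ for the Frobenius operator on $q$-expansions, the $p$-depletion is $f^{(p)}=(1-a_{p}(\phi)V_{p}+pV_{p}^{2})f$, and since $d^{-1}V_{p}=p^{-1}V_{p}d^{-1}$ on formal $q$-expansions, $g$ is obtained from the Coleman primitive of $\omega_{f}$ by applying $1-p^{-1}a_{p}(\phi)V_{p}+p^{-1}V_{p}^{2}$. On the Serre--Tate/canonical coordinate around $x_{\sigma}$ furnished by Lemma~\ref{CM}, Shimura's reciprocity law identifies the operator $V_{p}$ with translation in $\Cl_{K}$ by the prime $\ov{\fp}_{K}$ above $p$ singled out by $\iota_{p}$; hence, after pairing the toric period against $\chi^{-1}$, the operator $1-p^{-1}a_{p}(\phi)V_{p}+p^{-1}V_{p}^{2}$ becomes multiplication by the scalar
$$
1-\chi^{-1}(\ov{\fp}_{K})p^{-1}a_{p}(\phi)+\chi^{-2}(\ov{\fp}_{K})p^{-1},
$$
that is, $P_{p}\big(p^{-1}\chi^{-1}(\ov{\fp}_{K})\big)$ for $P_{p}(T)=1-a_{p}(\phi)T+pT^{2}$ the Hecke polynomial of $\phi$ at $p$.

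Finally I would sum the single-point identity over $\sigma\in\Cl_{K}$ against $\chi^{-1}$. Since $\log_{\omega_{f}}$ is a homomorphism it commutes with $\sum_{\sigma\in\Cl_{K}}\chi(\sigma)^{-1}(\cdot)$ (extended $\cO[\chi]$-linearly), so
$$
h_{K}\cdot P_{g}(\chi)=\sum_{\sigma\in\Cl_{K}}\chi(\sigma)^{-1}g(x_{\sigma})=\big(1-\chi^{-1}(\ov{\fp}_{K})p^{-1}a_{p}(\phi)+\chi^{-2}(\ov{\fp}_{K})p^{-1}\big)\log_{\omega_{f}}\!\Big(\sum_{\sigma\in\Cl_{K}}\chi(\sigma)^{-1}f(x_{\sigma})\Big),
$$
and the inner sum is $P_{f}(\chi)\in A(H_{\chi})$ by Lemma~\ref{rationality}, so $\log_{\omega_{f}}(P_{f}(\chi))$ is defined and one obtains the asserted formula; the factor $h_{K}$ on the left is precisely the one built into the definition of $P_{g}(\chi)$.

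The main obstacle is the bookkeeping of normalisations, not the structure of the argument: one must choose $\omega_{f}$, the Hodge class $\xi$ on $X$, the base point of the Albanese embedding (\cite[Rem.~6]{Z}) and the $(\cO_{\phi},\fp_{\phi})$-optimal parametrisation $f$ (\cite[\S3.7]{Z}) so that the single-point Coleman identity matches \cite{BDP1}, \cite{Bro}, \cite{LZZ} with no stray constant, and in particular settle whether $\fp_{K}$ or $\ov{\fp}_{K}$ enters the local factor --- a genuine choice dictated by the compatibility of $\iota_{p}$ and $\iota_{K}$ with the direction of Shimura's reciprocity law. A secondary technical point is that $A$ carries $\cO_{L_{\phi}}$-multiplication and possibly inner twists, so $\omega_{f}$ is really a tuple of differentials and $\log_{\omega_{f}}$ must be read off the $\fp$-adic component determined by $\iota_{p}$; this is harmless once one localises at $\fp$ and works with the formal-group logarithm on the completion of $A$ at the prime above $p$, which is legitimate since the CM points are $p$-ordinary (Lemma~\ref{CM}) and $p\nmid N$.
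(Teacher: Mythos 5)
Your proposal is correct and follows essentially the same route as the paper's proof: both express the relation between $g=d^{-1}(f^{(p)})$ and the Coleman primitive $F_{f}$ of $\omega_{f}$ via the Hecke polynomial at $p$ acting through translation of CM points by the prime above $p$, invoke the BDP identity $F_{f}(x_{\sigma})=\log_{\omega_{f}}(f(x_{\sigma}))$, and sum over $\Cl_{K}$ against $\chi^{-1}$ to turn the translation operator into the stated Euler-type scalar. The normalisation issues you flag (direction of Shimura reciprocity, $\fp_{K}$ versus $\ov{\fp}_{K}$, choice of $\omega_{f}$ and base point) are exactly the points the paper disposes of by citing \cite[Lem.~3.23, Prop.~3.24]{BDP1}, \cite[Thm.~3.12]{BDP2} and the proof of \cite[Thm.~5.13]{BDP1}.
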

\begin{proof} This is essentially proven in \cite{BDP1}. We recall the steps and refer to \cite[\S3]{BDP1} for details.

Note that
\begin{equation}
\label{$p$-Hecke}
T_{p}(f)= a_{p}(\phi)f, \langle p \rangle (f)=f
\end{equation}
for $\langle p \rangle$ being the diamond operator associated with $p$. Indeed, this follows from the construction of the toric test vector $\phi$ in \S3.1.

Let $F_{f}$ the Coleman primitive of the differential $\omega_{f}$ associated with $f$ (\cite[\S3.7 and \S3.8]{BDP1}).
In view of \cite[Lem. 3.23 and Prop. 3.24]{BDP1}, we have
$$
F_{f}(x_{\sigma})=g( x_{\sigma}) - a_{p}(\phi) g(\mathfrak{p}_{K}*x_{\sigma})+\frac{1}{p}g(\fp_{K}^{2}*x_{\sigma}).
$$
For the $``*"$-action on the set of CM points, we refer to \cite[(1.4.8)]{BDP1}.

From \cite[Lem. 3.23]{BDP1} and \cite[Thm. 3.12]{BDP2}, we have
$$
F_{f}(x_{\sigma})=\log_{\omega_{f}}(f(x_{\sigma})).
$$

The summation of the above identity over $\sigma \in \Cl_{K}$ along with \ref{$p$-Hecke} and \cite[Proof of Thm. 5.13]{BDP1} finishes the proof.

\end{proof}
\begin{remark}
Based on Waldspurger formula, the left hand side of the above formula turns out to be a value of an anticyclotomic Rankin--Selberg $p$-adic L-function outside its interpolation range (\cite{BDP1}, \cite{Bro} and \cite{LZZ}). However, the above version only relies on Coleman integration and it suffices for our application to $p$-indivisibility of Heegner points. 
\end{remark}

In view of the $p$-adic Waldspurger formula and the $p$-indivisibility of toric periods (Theorem \ref{toric2}), we have the following $p$-indivisibility of Heegner points.

\begin{thm}\label{main3}
Let $\phi \in S_{2}(\Gamma_{0}(N))$ be a non-CM newform.
Let $p\nmid N$ be a
prime such that
\begin{itemize}
\item[(irr)] the mod $p$ Galois representation $\ov{\rho}_{\phi, \fp}$ is absolutely irreducible.
\end{itemize}
Let $A_{\phi}$ be the corresponding $\GL_2$-type abelian variety such that $\cO_{L_{\phi}} \subset \End(A)$.
Let $K$ be an imaginary quadratic field with $K \in \Theta$ for $\Theta$ as above.
Let $\chi$ be a finite order Hecke character over $K$ as above. 
Let $\fp$ the prime above $p$ of the coefficient field $L$ corresponding to the fixed embedding $\iota_p$.
Let $P_{f}(\chi)$ be the Heegner point corresponding to the pair $(f,\chi)$.

For $\epsilon > 0$, we have
$$
 \#\left\{\chi\in\fX_{K}\ \Big|
 \text{$P_{f}(\chi)$ $\fp$-indivisible in $A(H_{\chi})$ }\right\}
\gg_{\epsilon} \log(|D_{K}|)^{1-\epsilon}
 $$
 as $K$ varies over imaginary quadratic fields with
 \begin{itemize}
 \item[(ord)] $p$ split.
 \end{itemize}
\end{thm}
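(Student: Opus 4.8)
The plan is to transfer the horizontal mod $p$ non-vanishing of toric periods (Theorem \ref{toric2}) to $\fp$-indivisibility of Heegner points through the $p$-adic Waldspurger formula (Theorem \ref{log}), using $p\nmid h_K$ and a uniform bound on the interpolation factor at $p$. Fix $K\in\Theta$ with $p$ split, set $g=d^{-1}(f^{(p)})$ as in \S3.3, and let $S_K=\{\chi\in\fX_K : v_p(P_g(\chi))=0\}$. By Theorem \ref{toric2}, $\#S_K\gg_\epsilon\log(|D_K|)^{1-\epsilon}$, so it suffices to show that for each $\chi\in S_K$ the Heegner point $P_f(\chi)\in A(H_\chi)$ (Lemma \ref{rationality}) is $\fp$-indivisible: distinct $\chi$ give Heegner points in distinct isotypic components, so the stated lower bound then follows at once.

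Fix $\chi\in S_K$. Theorem \ref{log} gives
$$h_K\cdot P_g(\chi)=e_p(\chi)\cdot\log_{\omega_f}\bigl(P_f(\chi)\bigr),\qquad e_p(\chi):=1-\chi^{-1}(\ov{\fp}_K)p^{-1}a_p(\phi)+\chi^{-2}(\ov{\fp}_K)p^{-1}.$$
Since $K\in\Theta$ we have $p\nmid h_K$, so $v_p(h_K)=0$; together with $v_p(P_g(\chi))=0$ this gives $v_p(\log_{\omega_f}(P_f(\chi)))=-v_p(e_p(\chi))$, and in particular $\log_{\omega_f}(P_f(\chi))\neq 0$, so $P_f(\chi)$ is non-torsion. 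I would then bound $e_p(\chi)$ uniformly: writing $t=\chi^{-1}(\ov{\fp}_K)$, a root of unity, one has $p\cdot e_p(\chi)=t^2-a_p(\phi)t+p$, which is $p$-integral because $a_p(\phi)$ is an algebraic integer; hence $v_p(e_p(\chi))\geq -1$, and reducing modulo the maximal ideal shows $v_p(e_p(\chi))=-1$ unless $t\equiv a_p(\phi)$ there — a condition that never holds when $\phi$ is supersingular at $\fp$ and, when $\phi$ is ordinary at $\fp$, cuts out a proper subset of $\fX_K$ controlled by the $\Cl_K$-torsor structure. In all cases $v_p(e_p(\chi))$ is bounded, so $v_p(\log_{\omega_f}(P_f(\chi)))$ is bounded above, equal to the minimal possible value $1$ for the $\fp$-primitive, $(\cO_\phi,\fp_\phi)$-optimal parametrization fixed in \S3.4, at least outside that exceptional subset. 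The non-CM hypothesis on $\phi$ enters through the Galois-image input behind this normalisation.

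Finally I would invoke the dictionary between the $\fp$-adic valuation of the $p$-adic logarithm of a Heegner point and its divisibility in the Mordell--Weil group, as in \cite{Z} and in the Euler-system set-up of \cite{N0} (with \cite{BDP1} for the logarithm): for the optimal parametrization, $\log_{\omega_f}(P_f(\chi))$ attaining its minimal $\fp$-valuation is equivalent to $P_f(\chi)$ being $\fp$-indivisible in $A(H_\chi)$. Applying this to the $\chi\in S_K$ yields $\gg_\epsilon\log(|D_K|)^{1-\epsilon}$ characters with $\fp$-indivisible Heegner point, as claimed.

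I expect the main obstacle to be making the last two steps uniform over the family: fixing the optimal/$\fp$-primitive normalisation of $f$ so that minimality of $v_p(\log_{\omega_f}(P_f(\chi)))$ genuinely certifies $\fp$-indivisibility of the global point, and simultaneously controlling the interpolation factor $e_p(\chi)$ — in particular the possible extra zero at $\chi^{-1}(\ov{\fp}_K)\equiv a_p(\phi)$ — as $(K,\chi)$ varies. Everything else is a formal consequence of Theorems \ref{toric2} and \ref{log} together with $p\nmid h_K$.
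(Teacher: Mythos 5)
Your route is the paper's route: Theorem \ref{toric2} supplies $\gg_\epsilon \log(|D_K|)^{1-\epsilon}$ characters with $v_p(P_g(\chi))=0$, and Theorem \ref{log} together with $p\nmid h_K$ and the $p$-integrality of the normalised logarithm converts each into a $\fp$-indivisible Heegner point; the paper's own proof is exactly this two-line deduction, so your write-up is essentially an expansion of it.

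The one place your argument wobbles is the Euler factor $e_p(\chi)$ in the ordinary case. You propose to establish indivisibility only \emph{outside} the set of $\chi$ with $\chi^{-1}(\ov{\fp}_K)\equiv a_p(\phi)$, asserting this is a ``proper subset controlled by the $\Cl_K$-torsor structure''. But the condition depends on $\chi$ only through $\chi(\ov{\fp}_K)$; if $\ov{\fp}_K$ is principal then $\chi(\ov{\fp}_K)=1$ for every $\chi$, and the congruence either holds for no $\chi$ or for all of them (e.g.\ $a_p\equiv 1$), so discarding that subset is not compatible with preserving the lower bound. The repair is that nothing needs to be discarded: since $p$ is split in $K$ and unramified in $H_\chi$, one has $v_p(\log_{\omega_f}(P))\geq 1$ for every $P\in A(H_\chi)$ (this is the ``$p$-integrality of the normalised logarithm'' the paper invokes, modulo the usual care with $\fp$-torsion in the reductions at $v\mid p$). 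Feeding this into $v_p(e_p(\chi))+v_p(\log_{\omega_f}(P_f(\chi)))=v_p(h_K P_g(\chi))=0$, together with your bound $v_p(e_p(\chi))\geq -1$, forces $v_p(e_p(\chi))=-1$ and $v_p(\log_{\omega_f}(P_f(\chi)))=1$; in particular the exceptional congruence simply cannot occur for $\chi\in S_K$, and the logarithm attains its minimal valuation, whence $\fp$-indivisibility. With that observation your proof closes and coincides with the paper's.
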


\begin{proof}
Recall that $p$ is unramified in the anticyclotomic extension $H_{\chi}$. In view of Lemma \ref{rationality} and the $p$-integrality of normalised $p$-adic logarithm, the $p$-indivisibility thus follows from Theorem \ref{toric2} and Theorem \ref{log}.
\end{proof}

\begin{remark}
Based on Jochnowitz congruence (\cite{V} and \cite{BD}), above theorem can be deduced from Theorem \ref{toric1} for variation over imaginary quadratic fields satisfying extra hypothesis that there exists a fixed prime inert in them. 
\end{remark}

\subsection{Tate--Shafarevich groups II} 
In this subsection, we describe bounds for the Tate--Shafarevich group in the analytic rank one case in terms of a Heegner points based on an analysis Euler system of Heegner points due to Kolyvagin (\cite{Ko}) as refined by Nekov\'a\v{r} (\cite{N}).

Let the notation be as in \S3.1. 
In particular, $L_\phi$ denotes the Hecke field of the newform $\phi\in S_2(\Gamma_0(N))$.
Let $\pi$ the cuspidal automorphic representation of $\GL_{2}(\BA)$ corresponding to the newform 
$\phi$ so that we have the equality
$$L(s,\pi)=L(s,\phi)$$
of complex L-functions. 
Let $A$ be a $\GL_2$-type abelian variety  over $\BQ$ associated to  $\phi$ such that $\CO_{L_\phi}\subset \End (A)$.


Let $B$ be a indefinite quaternion algebra and $\chi\in \fX_K$ a character over an imaginary quadratic field $K$ as in \S2.1. Recall that $f$ is a toric form on $B_{\BA}^\times$ and
$U \subset (B_{\BA}^{(\infty)})^{\times}$
the level corresponding to $f$. 
We have the underlying Heegner point $P_{f}(\chi) \in A(H_{\chi})$.

In the rest of this subsection, let $L=L_{\phi,\chi}$. Let $A_\chi=A_K\otimes_{\CO_{L_\phi}} \CO_L$ be the Serre tensor where the absolute Galois group $G_K$ acts on $\CO_L$ via $\chi$. In regards to certain aspects of the arithmetic corresponding the pair $(A,\chi)$, it turns out that $A_{\chi}$ is a relevant abelian variety.

The main result of this subsection regarding the Tate--Shafarevich group of the Serre tensor $A_{\chi}$ is the following.
  \begin{prop} \label{Ko}
   Let $\phi \in S_{2}(\Gamma_{0}(N))$ be a non-CM newform.
 Let $K/\BQ$ be an imaginary quadratic extension and $\chi$ an unramified finite order Hecke character over $K$ as above.
 Let $p$ be a prime and $\fp_\phi$ (resp. $\fp$) the prime above $p$ of the coefficient fields $L_\phi$ (resp. $L$) corresponding to the fixed embedding $\iota_p$. Let $A$ be an abelian variety corresponding to $\phi$ such that $\cO_{L_{\phi}} \subset \End(A)$. For $\ell |N$, let $c_{\ell}$ be the corresponding Tamagawa number. Suppose that the following holds.

 \begin{enumerate}
 \item The $p$-adic Galois representation $\rho_{A, \fp_\phi}$ has maximal image,
 \item $p\nmid 6N \cdot (\prod_\ell c_\ell )\cdot h_K$,
 \item There exists a polarisation $\varphi:A \rightarrow A^{\vee}$ with $p \nmid \deg(\varphi)$ for $A^{\vee}$ the dual abelian variety.
 \item The Heegner point $P_f(\chi)$ is $\fp$-indivisible with $f$ the $p$-primitive toric form in last subsection.
 \end{enumerate}
 Then, 
$$\Sha(A_\chi/K)[\fp^\infty]=0 \qquad \text{for $|D_K|\gg 0$}.$$ 
As $p\nmid h_{K}$, we also have
$$
\Sha(A/H_{K})^\chi[\fp^\infty]\cong \Sha(A_\chi/K)[\fp^\infty]=0 \qquad \text{for $|D_K|\gg 0$}.
$$

 \end{prop}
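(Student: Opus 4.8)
The plan is to mirror the proof of Proposition \ref{BD}, replacing the Euler system of toric periods with Kolyvagin's Euler system of Heegner points as refined by Nekov\'a\v{r} (\cite{N}). First I would invoke the Kolyvagin--Nekov\'a\v{r} machinery: since the Heegner point $P_f(\chi)\in A(H_\chi)$ of Lemma \ref{rationality} is $\fp$-indivisible, hence of infinite order, the derived Kolyvagin classes built from the norm-compatible system of Heegner points over ring class fields produce an annihilator
$$
\fp^{C'_\chi}\cdot\big[A(H_K)^\chi:\BZ[\chi]\cdot P_f(\chi)+A(H_K)^\chi_{\tor}\big]\cdot\Sha(A_\chi/K)[\fp^\infty]=0
$$
for a non-negative integer $C'_\chi$ depending only on the local and Galois-theoretic invariants of $A$ at $\fp$, on $\deg(\varphi)$, and on $[H_\chi:K]$. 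Because $P_f(\chi)$ is a $\fp$-unit multiple of a generator of its $\BZ[\chi]$-line modulo torsion, the index is a $\fp$-unit, so it remains to show $C'_\chi=0$ for every $\chi\in\fX_K$ once $|D_K|\gg 0$.

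Second, I would run through the constituent constants in Nekov\'a\v{r}'s bound exactly as in the proof of Proposition \ref{BD}. The maximal image hypothesis (1) forces $\GL_2(\BZ_p)\subseteq\Im(\rho_{\fp_\phi})$; as $A$ is non-CM, only finitely many imaginary quadratic fields lie in $\BQ(A[\fp^\infty])$, so for $|D_K|\gg 0$ there is $g_K\in G_\BQ$ trivial on the values of $\chi$, nontrivial on $K$, with $\rho_{\fp_\phi}(g_K)=\diag(\pm 1,\lambda)$ and $\lambda^2-1$ a $p$-adic unit; this plus Burnside and Nakayama give the analogues of $C_2=C_4=0$, and the exponent-two argument of \ref{BD} --- intersecting $\BQ(A_\chi[\fp^\infty])$ with $H_\chi$ to get a field $M$ with $\Gal(M/\BQ)$ of exponent two, and using $\BZ_p[\GL_2(\BZ_p)^2]=\BZ_p[\GL_2(\BZ_p)]$ for $p>3$ --- yields the analogues of $C_7=C_8=0$. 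The hypothesis $p\nmid\prod_\ell c_\ell$ together with the description of Tamagawa numbers of $A_\chi$ in \cite[Prop. 3]{Kh} kills the Tamagawa contribution, and $p\nmid 6N$ disposes of the small-prime factors. The one genuinely new ingredient, relative to the toric case, is hypothesis (3): Kolyvagin's descent requires a non-degenerate alternating pairing on the relevant Galois modules, provided by the $\fp$-adic Weil pairing attached to the polarisation $\varphi$, and the local Tate and global Cassels--Tate pairings are perfect on $\fp$-primary parts precisely because $p\nmid\deg(\varphi)$ lets one transport classes between $A_\chi$ and $A_\chi^\vee$ with $\fp$-unit error; this is what replaces the hypothesis $p\nmid\prod_\ell d_\ell$ and the ideal $I$ from Proposition \ref{BD}, the latter already being absorbed into the $(\cO_\phi,\fp_\phi)$-optimal choice of parametrisation fixed at the start of this subsection.

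Third, having obtained $\Sha(A_\chi/K)[\fp^\infty]=0$ for $|D_K|\gg 0$, I would invoke the Serre-tensor comparison of \cite{MRS}: since $p\nmid h_K$, the natural map gives $\Sha(A/H_K)^\chi\otimes\BZ[h_K^{-1}]\cong\Sha(A_\chi/K)\otimes\BZ[h_K^{-1}]$, so the $\fp$-primary parts agree and vanish together, which is the second assertion of the proposition.

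I expect the main obstacle to be uniformity: one must make all of Nekov\'a\v{r}'s constants vanish simultaneously for every $\chi\in\fX_K$ and choose the threshold on $|D_K|$ independently of $\chi$. This reduces to checking that the arithmetic input --- the image of $\rho_\fp$ restricted to $G_{H_\chi}$, the torsion of $A_\chi(K)$ and $A_\chi^\vee(K)$, and the Tamagawa numbers of $A_\chi$ --- is controlled by data of $A$ alone together with the fixed $S$-type $\chi_{0,S}$. This works because $\chi\in\fX_K$ is unramified of conductor one, so $H_\chi/K$ is unramified with degree dividing $h_K$; the exponent-two trick makes the Galois-image bound uniform, the finiteness of $\bigcup_K A(H_K)_{\tor}$ (Proposition \ref{torsion}) handles torsion uniformly, and the bad primes of $A_\chi$ stay among those dividing $N$ together with the primes above $h_K$. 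With these uniformities in hand the remainder is bookkeeping over Nekov\'a\v{r}'s list of constants.
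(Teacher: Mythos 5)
Your proposal is correct and follows essentially the same route as the paper: reduce to Nekov\'a\v{r}'s list of constants for the Heegner-point Euler system, kill each one using the maximal-image/exponent-two arguments already deployed for Proposition \ref{BD} together with $p\nmid \prod_\ell c_\ell\cdot h_K$ and $p\nmid\deg(\varphi)$, and transfer the vanishing to $\Sha(A/H_K)^\chi$ via the Serre-tensor comparison of \cite{MRS}. The only cosmetic difference is that the paper organises the descent through the Selmer group and the class $\kappa=\varpi^{C_1}\delta(P_f(\chi))$ (with $C_1$ coming from component groups of the N\'eron model) rather than through the index $[A(H_K)^\chi:\BZ[\chi]P_f(\chi)+A(H_K)^\chi_{\tor}]$, but the content is the same.
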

 \begin{proof}

 When the Heegner point $P_{f}(\chi)$ is non-torsion and $\fp_\chi$-indivisible, the Kolyvagin method to bound
$\Sha(A_\chi/K)[\fp^\infty]$ shows that
$$
\fp^{C_{\chi}}\Sha(A_\chi/K)[\fp^\infty]=0
$$
for a non-negative integer $C_{\chi}$ (\cite[7.5.3 \& 7.6.5]{N0}). The constant $C_{\chi}$ possibly depends on $p$.

Under the hypotheses (1)--(3), we show that
$$
C_{\chi}=0
$$
for all $\chi \in \fX_{K}$ as $|D_{K}|\gg 0$.

In the following exposition, we suppose some familiarity with \cite[\S2]{N0}.

Strictly speaking, the approach is slightly different which we now describe. Let $\cO=\cO_\fp$
and $M$ a positive integer with $h_{L}| M$.

Let $Sel_{M,\chi}$ be the classical Selmer group $\Sel(A_{\chi}/H_{\chi},\fp^{M})$ for $\fp^M$-descent on the abelian variety $A_{\chi}$. Multiplication by $\fp^M$ along with a choice of generator of $\fp^M$ gives rise to the Kummer exact sequence
$$
0 \rightarrow A_{\chi}(H_{\chi})/\fp^{M} \xrightarrow{\delta} Sel_{M,\chi} \rightarrow \Sha(A_{\chi}/H_{\chi})[\fp^{M}] \rightarrow 0.
$$

Let $\varpi \in \cO_L$ be a uniformiser corresponding to $\fp$. Let
$$
\kappa = \varpi^{C_{1}}\delta(P_{f}(\chi)) \in Sel_{M,\chi}
$$
for a non-negative integer $C_1$ considered below. Then, there exists a non-negative integer $C_{\chi}$ such that
$$
\fp^{C_{\chi}}(Sel_{M,\chi}/\cO\cdot \kappa)=0.
$$

Under the hypotheses (1)--(3), we show that
$$
C_{1}=C_{\chi}=0
$$
for all $\chi \in \fX_{K}$ as $|D_{K}|\gg 0$. It would then follow
$$
\Sha(A_{\chi}/H_{\chi})[\fp^\infty]=0.
$$

In view of the description of $C_{1}$ and $C_{\chi}$, we consider the following.
\begin{itemize}

\item[(i)] 
For a finite place $v$ of $H_\chi$, let $\widetilde{A_{\chi,v}}$ be the special fiber of the N\'{e}ron model of
$A_{\chi}$ over $\cO_{H_{\chi}}$ at $v$. Let $\pi_{0}(\widetilde{A_{\chi,v}})$ be the $\cO_L$-module of the connected components of $\widetilde{A_{\chi,v}}$. Let $C_{1,v}$ be smallest non-negative integer such that
$$
\fp^{C_{1,v}}\pi_{0}(\widetilde{A_{\chi,v}})_{\fp}=0.
$$
Let
$$
C_{1}=\max_{v}C_{1,v}.
$$

In view of the hypothesis (2), we have
$$
C_{1}=0
$$
(\cite[7.2.1]{N0}).


\item[(ii)] As in the proof of Proposition \ref{BD}, we also have
$$
\Im(O_{L,\mathfrak{p}}[G_{H_{\chi}}]\rightarrow \End_{O_{L,\mathfrak{p}}}(A_{\chi}[\fp^{\infty}]))=
M_{2}(O_{L,\mathfrak{p}})
$$
for $p>3$ and $|D_{K}|\gg 0$. 

In the notation of \cite{N0}, we thus have $C_{3}=0$.

\item[(iii)] Let $M_0$ be the constant in \cite[5.1]{N0}.
Let $H_{M,\chi}=\BQ(A[\mathfrak{p}^{M+M_{0}}])H_{\chi}$ and
$U_{M}=\Gal(H_{M,\chi}/H_{\chi})$.
Let $\Res:H^{1}(H_{\chi},A[\mathfrak{p}^{M}])\rightarrow H^{1}(H_{M,\chi},A[\mathfrak{p}^{M}])^{U_{M}}$
be the restriction map.
Based on the proof of (iv) in the proof of Proposition \ref{BD}, we have
$$
\ker(\Res)=0.
$$
for $p>3$ and $|D_{K}|\gg 0$. 

In the notation of \cite{N0}, we thus have $C_{3}=0$.

\item[(iv)] As $p\nmid h_{K}$, we have $C_{4}=C_{5}=0$ in the notation of \cite{N0}.

\item[(v)] In view of our hypothesis $p \nmid \deg(\varphi)$, we have $C_{6}=0$ in the notation of \cite{N0}.
\end{itemize} 

We now recall 
$$
C_{\chi}=2C_{1}+4C_{2}+4C_{3}+C_{5}+C_{6}.
$$
if $\chi^{2}=1$ and 

$$
C_{\chi}=4C_{1}+7C_{2}+7C_{3}+5C_{4}+2C_{5}+2C_{6}.
$$
otherwise  (\cite[7.5.3 \& 7.6.5]{N0}). 

We thus conclude 
$$
C_{\chi}=0
$$
for all $\chi \in \fX_{K}$ such that $|D_{K}| \gg 0$ (\cite[2.9.6]{N}).
This finishes the proof for $\Sha(A_{\chi})$.

In view of \cite{MRS}, the rest follows for $\Sha(A/H_{K})$ from the result for the Serre tensor $A_{\chi}$.
\end{proof}

\begin{remark}
Based on an analysis of Euler system method for Heegner points, it may be checked that there exists a constant $C$ such that
$$
\# \Sha(A_{\chi})[\fp^{\infty}] \bigg{|} p^{C+v_{p}(P_{f}(\chi))}
$$
for all $\chi \in \fX_{K}^{-}$ as $|D_{K}|\gg 0$.
Here $v_{p}(P_{f}(\chi))$ measures the $\fp$-divisibility of the Heegner point $P_{f}(\chi)$.
In this sense, the Heegner points control the size of the Tate-Shafarevich groups.
\end{remark}
\subsection{Analytic Sha II}
In this subsection, we describe bounds for analytic Sha in the analytic rank one case in terms of a Heegner point based on an analysis of explicit Waldspurger formula due to Cai--Shu--Tian (\cite{CST}).
The consideration along with the main result in \S3.5 leads us to establish the $p$-part of corresponding Birch and Swinnerton-Dyer conjecture for a class of primes $p$.

Let the notation and assumptions be as in \S3.1. In particular, $A$ denotes a $\GL_2$-type abelian variety over the rationals corresponding to a weight two newform $\phi \in S_{2}(\Gamma_{0}(N))$ and $\chi$ a finite order Hecke character over an imaginary quadratic field $K$ such that
$$
\epsilon(A,\chi)=-1
$$
((RN)).
 Moreover,
$A_{\chi}$ denotes the Serre tensor of $A$ by $\chi$ which is an abelian variety over $K$ with $\cO_{L} \subset \End(A_{\chi})$.

Recall that, we refer to $\CL(A_{\chi})$ in \S2.3 as the analytic Sha of $A_\chi$. 

In this subsection, we consider the BSD conjecture in the analytic rank one case.
We thus suppose that
 $$L'(1, A_\chi)\neq 0.$$

To introduce an explicit Gross--Zagier formula, we begin with some notation. 
Let $B$ be a definite quaternion algebra as in \S3.1. 
Let $R \subset B$ be an order as in \cite[\S1.1]{CST}. 
Recall that $f$ is a toric form on $B_{\BA}^\times$ and
$U \subset (B_{\BA}^{(\infty)})^{\times}$
the level corresponding to $f$. 
Let $X_U$ be the corresponding Shimura curve. 
Let $N=N^{+}N^{-}$ for $N^{+}$ (resp. $N^{-}$) precisely divisible by split (resp. non-split) primes in the extension $K/\BQ$. 
Recall that $S$ denotes the set of prime divisors of $N\infty$. 

For any $f_i \in \pi^{B}$ factoring through $X_{U}$ with $i=1,2$ (\S3.1), we put
   \[ \langle f_1,f_2\rangle = \frac{1}{\vol(X_{U})}\deg  ((f_{2}\circ \tau_{B})^{\vee}\circ f_{1}). \]
   Here a canonical polarisation is used on the Jacobian and
   $\tau_B \in N_{\wh{B}^\times}(\wh{R}^\times)$
   such that
   \begin{itemize}
	   \item for $\ell|N^+$,
		 $\tau_{B,\ell} \in N_{B_\ell^\times}(R_\ell^\times) \setminus \BQ_\ell^\times R_\ell^\times$ the Atkin--Lehner operator;
	   \item for $\ell | N^-$, $\tau_{B,\ell} t_\ell = \ov{t_\ell} \tau_{B,\ell}$ for any $t_\ell \in K_\ell^\times$ where
	   $\ov{t_\ell}$ is the Galois conjugate of $t_\ell$.
   	   \item for $\ell \nmid N$, $\tau_{B,\ell} = 1$.
   \end{itemize}
   We consider another 
    pairing $(\ ,\ )_{\widehat{R}^\times}$ is given by
    \[ (f_1,f_2)_{\widehat{R}^\times} = \frac{1}{\vol(X_{U})} \deg (f_{2}^{\vee}\circ f_{1}). \]
    
From \cite[Thm. 1.5]{CST}, we have an explicit Gross--Zagier formula given by 
$$
L'(1, A_\chi)=2^{-\# \Sigma_D+2}\cdot \frac{8\pi^2 (\phi,\phi)_{\Gamma_0(N)}}{[\CO_{K}^\times :\BZ^\times]^2\sqrt{|D_K|^2}}\cdot 
\frac{\langle P_{f_1}(\chi), P_{f_2}(\chi^{-1})\rangle}{(f_1, f_2)_{\wh{R}^\times}}.
$$
Here $\Sigma_{D}=\big{\{}\ell \big{|} \ell | (N,D_{K})\big{\}}$, $f_{1} \in V(\pi_{B},\chi)$ and $f_{2} \in V(\pi_{B},\chi^{-1})$ are non-zero vectors. For the definition of $V(\pi_{B},\chi)$, we refer to \cite[Def. 1.7]{CST}. 
Here we only mention that $f_1$ can be taken to be the toric form $f$ as in \S3.1.

In the analytic rank one case, the analytic Sha of $A_\chi$ is thus given by
$$
\CL(A_\chi)=
\frac{\pi^2 (\phi,\phi)_{\Gamma_0(N)}}{\sqrt{|D_K|^2}\Omega(A_\chi/K)}\cdot
\frac{\langle P_{f_1}(\chi), P_{f_2}(\chi^{-1})\rangle}{(f_1, f_2)_{\wh{R}^\times}}\cdot 
\frac{\Fitt (A_\chi(K)_\tor) \Fitt (A^\vee_{\chi^{-1}}(K)_\tor)}{2^{\# \Sigma_D -5}\cdot [\CO_K^\times :\BZ^\times]^2\cdot\prod_{v}\Fitt(c_v(A_\chi))}.
$$

 We have the following key proposition regarding the variation of the $\mathfrak{p}$-part of analytic Sha in terms of Heegner points.

\begin{prop}\label{control1}
Let $\phi \in S_{2}(\Gamma_{0}(N))$ be a non-CM newform and $L_{\phi}$ the corresponding Hecke field.
 Let $A$ be an abelian variety corresponding to $\phi$ such that $\cO_{\phi} \subset \End(A)$.

There exists a finite set $\Sigma_{A}$ of primes of $\CO_\phi$ only dependent on $A$,  such that for any prime $\fp_0\notin \Sigma_{A}$ of $\CO_\phi$ and any $\chi\in \fX$ with $L'(1, A_\chi)\neq 0$, the ideal
$$
\displaystyle{\frac{\CL(A_\chi)}{[A_{\chi}(K):\cO_{\phi,\chi} P_{f}(\chi)]^{2}}}
$$
is coprime to $\fp_0$.
Here $f$ is the $\chi_{0, S}$-toric vector as in last subsection.
 In particular, if $P_f(\chi)$ is a $\fp$-indivisible, then $\CL(A_\chi)$ is a $\fp$-unit as long $\fp$ lies above some $\fp_{0} \notin \Sigma_{A}$.

\end{prop}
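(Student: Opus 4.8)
The plan is to mirror the proof of Proposition \ref{control0}, replacing the explicit Waldspurger formula by the explicit Gross--Zagier formula of Cai--Shu--Tian and the toric period by the Heegner point $P_f(\chi)$. First I would take $f_1'=f$ to be a $\fp$-optimal toric test vector satisfying (F1) and (F2) of \S3.1 and let $f_2'$ be its $J$-translate $f_2'(x)=f_1'(xJ)$ for $J\in B^\times$ with $Jt=\ov{t}J$ for all $t\in K$, exactly as in the proof of Proposition \ref{control0}; then $f_2'$ differs from $f_1'$ only at places dividing $N$, and $P_{f_2'}(\chi^{-1})=\pm P_{f_1'}(\chi)=\pm P_f(\chi)$ in $A_\chi(H_\chi)$, the sign accounting for the Atkin--Lehner operators at $\ell\mid N^+$ (cf. \cite[\S2.1]{CH}). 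Next I would invoke the $S$-version of the Gross--Zagier formula in \cite{CST} --- the exact analogue of the $S$-version of Waldspurger used for Proposition \ref{control0} --- which writes $L'(1,\phi,\chi)$ as an explicit bounded archimedean constant times $\pair{\phi^0,\ov\phi^0}_{U_0(N)}/(\sqrt{|D_K|}\,[\cO_K^\times:\BZ^\times]^2)$ times $\langle P_{f_1'}(\chi),P_{f_2'}(\chi^{-1})\rangle/\pair{f_1',f_2'}_{\wh{R}^\times}$ times a product over $v\in S\setminus\{\infty\}$ of local ratios $\beta^0(f_{1,v},f_{2,v})/\beta^0(f'_{1,v},f'_{2,v})$.

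Substituting this into the definition of $\CL(A_\chi)$ from Conjecture \ref{BSD}, I would use that in analytic rank one the regulator $R(A_\chi/K)$ is the N\'eron--Tate self-pairing of an $\cO_{\phi,\chi}$-generator of $A_\chi(K)$ modulo torsion, so that $\langle P_f(\chi),P_f(\chi)\rangle=R(A_\chi/K)\cdot[A_\chi(K):\cO_{\phi,\chi}P_f(\chi)]^2$; here $P_f(\chi)$ generates $A_\chi(K)\otimes_{\cO_{\phi,\chi}}\BQ(\chi)$ by Gross--Zagier together with Kolyvagin once $L'(1,A_\chi)\neq0$. Dividing through by $[A_\chi(K):\cO_{\phi,\chi}P_f(\chi)]^2$ cancels the regulator, so that $\CL(A_\chi)/[A_\chi(K):\cO_{\phi,\chi}P_f(\chi)]^2$ equals, up to a fixed sign and a bounded rational constant, the product of $\frac{(\phi,\phi)_{\Gamma_0(N)}}{\sqrt{|D_K|^2}\,\Omega(A_\chi/K)}$, the torsion Fittings $\Fitt(A_\chi(K)_\tor)\Fitt(A^\vee_{\chi^{-1}}(K)_\tor)$, $\prod_v\Fitt(c_v(A_\chi))^{-1}$, $\pair{f_1',f_2'}_{\wh{R}^\times}^{-1}$, and the $S$-local ratios. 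It then remains to bound the $\fp_0$-valuation of each factor uniformly in $(K,\chi)$ of fixed $S$-type, and this goes as in the proof of Proposition \ref{control0}: by strong approximation $J$ may be chosen in $(\prod_{v\mid N}B_v^\times)\cdot\GL_2(\wh{\BZ}^{(N)})\cap B^\times$, so the $S$-local ratios and $\pair{f_1',f_2'}_{\wh{R}^\times}$ (with $f$ spherical outside $S$) become fixed rational numbers; $\Omega(A_\chi/K)=\Omega(A/K)$ as fractional ideals up to divisors of $N$ by \cite[\S4.4.1]{BCW}; the Tamagawa numbers $c_v(A_\chi)$ of the Serre tensor, together with the definition of the period, produce a nonzero ideal $\fa_1$ of bounded prime support contained in $\frac{(\phi,\phi)_{\Gamma_0(N)}}{\sqrt{|D_K|^2}\,\Omega(A_\chi/K)}\prod_v\Fitt(c_v(A_\chi))$; and by Proposition \ref{torsion} there is a nonzero ideal $\fa_2$ of bounded prime support contained in $\Fitt(A_\chi(K)_\tor)\Fitt(A^\vee_{\chi^{-1}}(K)_\tor)$. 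Taking $\Sigma_A$ to be the finite union of the prime divisors of these fixed quantities and of the supports of $\fa_1$ and $\fa_2$ gives the first claim; the ``in particular'' statement then follows, since $\fp$-indivisibility of $P_f(\chi)$ in $A(H_\chi)$ forces $v_\fp([A_\chi(K):\cO_{\phi,\chi}P_f(\chi)])=0$ via the identification $A(H_K)^\chi\cong A_\chi(K)$ away from $h_K$ (\cite{MRS}) and $p\nmid h_K$.

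The hard part will be the regulator computation: identifying, after the $J$-translate, the N\'eron--Tate pairing $\langle P_{f_1'}(\chi),P_{f_2'}(\chi^{-1})\rangle$ appearing in the Gross--Zagier formula with $R(A_\chi/K)\cdot[A_\chi(K):\cO_{\phi,\chi}P_f(\chi)]^2$ in the normalization of \cite{BCW} for $\cO$-module regulators, and verifying that the Heegner point indeed generates $A_\chi(K)$ modulo torsion with index prime to $\fp_0$ for the relevant $\chi$. This demands careful bookkeeping of the $\cO_{\phi,\chi}$-module structure of the Serre tensor and of the interaction between the Atkin--Lehner signs and the height pairing, but it should require no input beyond \cite{CST}, \cite{CH}, \cite{BCW}, and the arguments already in place for Proposition \ref{control0}.
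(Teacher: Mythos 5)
Your proposal is correct and follows essentially the same route as the paper: the $S$-version of the explicit Gross--Zagier formula of Cai--Shu--Tian, the $J$-translate test vector, cancellation of the regulator against the N\'eron--Tate self-pairing of the Heegner point to leave the index $[A_\chi(K):\cO_{\phi,\chi}P_f(\chi)]^2$, and then the identical bounding of the remaining factors as in Proposition \ref{control0}. The only notable difference is that you make explicit the identity $\langle P_f(\chi),P_f(\chi)\rangle=R(A_\chi/K)\cdot[A_\chi(K):\cO_{\phi,\chi}P_f(\chi)]^2$ (together with the fact that $P_f(\chi)$ generates $A_\chi(K)\otimes\BQ(\chi)$ in rank one), a step the paper leaves implicit, whereas the paper instead routes the comparison of $P_{f_2'}(\chi^{-1})$ with $P_{f_1'}(\chi)$ through an isogeny $\alpha:A_\chi\to A^\vee_{\chi^{-1}}$ rather than through Atkin--Lehner signs.
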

\begin{proof} 

Let $f_{1}'=f$ be a $\mathfrak{p}$-minimal toric test vector satisfying (F1) and (F2) in \S3.1. Let $J \in B$ be analogue of the construction in \cite[\S2.1]{CH}. Let $f_{2}'$ the $J$-translate of $f_{1}'$ given by 
$$
f_{2}'(x)=f_{1}'(xJ).
$$
Locally, $f_{2}'$ differs from $f_{1}'$ only at the places dividing $N$. 
We have
$$
P_{f_{1}'}(\chi), P_{f_{2}'}(\chi^{-1})
\in A_{\chi}(K).
$$

Let $\alpha: A \rightarrow A^\vee$ be an isogeny.
It gives rise to an isogeny $\alpha:A_{\chi}\rightarrow A^{\vee}_{\chi^{-1}}$.
Note that
$$
P_{\alpha \circ f_{2}'}(\chi^{-1})=\alpha \circ P_{f_{1}'}(\chi).
$$



We now recall the $S$-version of Gross--Zagier formula in \cite[Thm. 1.6]{CST}.
It states that
$$
L' (1, \phi, \chi)=
2^{-\#\Sigma_D+2}\cdot C_\infty \cdot \frac{\pair{\phi^0, \ov\phi^0}_{U_0(N)}}{[\cO_{K}^{\times}:\BZ^{\times}]^2 \sqrt{|D_K| }}\cdot
\frac{\langle P_{f'_1}(\chi), P_{f'_2}(\chi^{-1})\rangle}{\pair{f'_1, f'_2}_{\wh{R}^\times}}
\cdot 
\prod_{v\in S\backslash \{\infty\}} \frac{\beta^0(f_{1, v}, f_{2, v})}{ \beta^0(f'_{1, v}, f'_{2, v})}.
$$
Here $\phi^{0}$ is a normalised new vector fixed by level $U_{1}(N)$ and the pairing $\langle \cdot , \cdot \rangle_{U_{0}(N)}$ is as in \cite[\S1.3]{CST}.  
Further, $f_{1}=f \in V(\pi_{B},\chi)$ and $f_{2} \in V(\pi_{B},\chi^{-1})$ are non-zero vectors, the local pairing $\beta^{0}(\cdot, \cdot)$ is as in \cite[Thm. 1.6]{CST} 
and $C_{\infty}$ denotes the constant in \cite[Thm. 1.8]{CST}.

It follows that the analytic Sha is given by
$$
\CL(A_\chi)=
\frac{C_{\infty} \cdot (\phi,\phi)_{\Gamma_0(N)}}{\sqrt{|D_K|^2}\Omega(A_\chi/K)}\cdot 
 \frac{\langle P_{f_1'}(\chi), P_{f_1'}(\chi)\rangle_{A_{\chi}}}{(f_1, f_2)_{\wh{R}^\times}}\cdot 
 \frac{\Fitt (A_\chi(K)_\tor) \Fitt (A^\vee_{\chi^{-1}}(K)_\tor)}{2^{\# \Sigma_D -5}[\cO_{K}^{\times}:\BZ^{\times}]^{2}\cdot\prod_{v}\Fitt(c_v(A_\chi))}\cdot 
\prod_{v\in S\backslash\{\infty\}} \frac{\beta^0(f_{1, v}, f_{2, v})}{ \beta^0(f'_{1, v}, f'_{1, J, v})}.
$$
Here $\langle \cdot, \cdot \rangle_{A}$ is the pairing given by
$\langle P, Q \rangle_{A}=\langle P, \alpha(Q)\rangle .$

We now discuss horizontal variation of the terms in the right hand side of the above expression except the Heegner point. It suffices to show that the terms have bounded prime divisors as the pair $(K,\chi)$-varies.

Note that the terms are identical to the ones appearing in the proof of Proposition \ref{control0} for analytic Sha in the analytic rank zero case. 

In view of the expression for the analytic Sha,
we thus finish the proof.
\end{proof}
Along with Proposition \ref{Ko}, we have the following immediate consequence for the BSD.
\begin{cor}\label{BSD1}
Let $\phi \in S_{2}(\Gamma_{0}(N))$ be a non-CM newform and $L_{\phi}$ the corresponding Hecke field.
 Let $A$ be an abelian variety corresponding to $\phi$ such that $\cO_{\phi} \subset \End(A)$.

There exists a finite set $\Sigma_{A}'$ of primes of $\CO_\phi$ only dependent on $A$,  such that the following holds. For any prime $\fp_0\notin \Sigma_{A}'$ of $\CO_\phi$ and for $\chi\in \fX$ with
the Heegner point $P_{f}(\chi)$ being a $\fp$-indivisible for $\fp$ above $\fp_{0}$, the $\fp$-part of BSD holds for the Serre tensor $A_{\chi}$ as long as $p \nmid h_{K}$ for $(p)=\fp_{0} \cap \BQ$.

\end{cor}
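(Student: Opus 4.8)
The plan is to read off this corollary by combining Proposition~\ref{control1} with Proposition~\ref{Ko}; since those two results carry all the weight, the only work here is to assemble the exceptional set and to unwind the definition of the $\fp$-part of BSD. First I would fix once and for all a polarisation $\varphi_0\colon A\to A^{\vee}$ (one exists, e.g.\ because $A$ is a quotient of $J_0(N)$) and put
\[
\Sigma_A' \;=\; \Sigma_A \;\cup\; \bigl\{\fp_0 : p \mid 6N\prod_{\ell\mid N} c_\ell\cdot\deg(\varphi_0)\bigr\}\;\cup\;\bigl\{\fp_0 : \rho_{A,\fp_\phi}\text{ is not of maximal image}\bigr\},
\]
where $\Sigma_A$ is the finite set furnished by Proposition~\ref{control1} and $(p)=\fp_0\cap\BQ$. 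This is a finite set of primes of $\cO_\phi$ depending only on $A$: finiteness of the last set is the Ribet--Momose input recalled in~\S2.3, and $p\nmid\deg(\varphi_0)$ for all but finitely many~$p$.

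Now fix $\fp_0\notin\Sigma_A'$, an imaginary quadratic field $K$ with $p\nmid h_K$, a character $\chi\in\fX$, and suppose $P_f(\chi)$ is $\fp$-indivisible for the prime $\fp\mid\fp_0$ of $L=L_{\phi,\chi}$. I would first note that $P_f(\chi)$ is non-torsion: since $\rho_{A,\fp_\phi}$ has maximal image, $A_\chi(K)_{\tor}$ has trivial $\fp$-part, so any torsion point has order prime to $p$ and is hence $\fp$-divisible, contradicting $\fp$-indivisibility. By the Gross--Zagier formula (\cite{GZ}, \cite{YZZ}) a non-torsion Heegner point forces $L'(1,A_\chi)\neq 0$, so $A_\chi$ has analytic rank one; the Gross--Zagier--Kolyvagin method (\cite{Ko}, \cite{N0}) then gives $\rank_{\cO_L}A_\chi(K)=1=\ord_{s=1}L(s,A_\chi)$, which is part~(2) of Conjecture~\ref{BSD} for $A_\chi$ and in particular makes the regulator, hence $\CL(A_\chi)$, well defined. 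Part~(1) of Conjecture~\ref{BSD} holds because $L(s,A_\chi)=L(s,\phi,\chi)$ is the Rankin--Selberg $L$-function of $\phi$ twisted by a finite order Hecke character and is therefore entire.

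It then remains to check the $\fp$-part of the identity in part~(3) of Conjecture~\ref{BSD}. Since $\fp_0\notin\Sigma_A'$, all the hypotheses of Proposition~\ref{Ko} are in force --- $\rho_{A,\fp_\phi}$ of maximal image, $p\nmid 6N\prod_\ell c_\ell\cdot h_K$, the polarisation $\varphi_0$ of degree prime to $p$, and $P_f(\chi)$ $\fp$-indivisible --- so Proposition~\ref{Ko} gives $\Sha(A_\chi/K)[\fp^\infty]=0$ for $|D_K|\gg 0$, hence $\ord_\fp\Fitt_{\cO_L}(\Sha(A_\chi/K))=0$. On the other hand $\fp_0\notin\Sigma_A$ and $L'(1,A_\chi)\neq 0$, so Proposition~\ref{control1} shows that $\CL(A_\chi)$ is a $\fp$-unit, i.e.\ $\ord_\fp\CL(A_\chi)=0$; hence the two sides of the part-(3) identity agree at $\fp$, which is the $\fp$-part of BSD for $A_\chi$. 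The implicit largeness of $|D_K|$ is simply inherited from Proposition~\ref{Ko}. I do not expect any genuine obstacle: the one point that needs a little care is the passage from $\fp$-indivisibility of $P_f(\chi)$ to its non-torsionness (and thereby the rank-one statement), and beyond that it is only a matter of checking that $\Sigma_A'$ absorbs every exceptional prime required by the two propositions.
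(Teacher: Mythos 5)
Your proposal is correct and follows exactly the route the paper intends: the corollary is stated as an immediate consequence of Proposition~\ref{Ko} and Proposition~\ref{control1}, and your write-up just assembles the exceptional set and verifies their hypotheses. The one detail you supply that the paper leaves implicit --- deducing non-torsionness (hence, via Gross--Zagier, the analytic rank one hypothesis needed for Proposition~\ref{control1}) from $\fp$-indivisibility using the triviality of the $\fp$-torsion under the maximal-image hypothesis --- is handled correctly.
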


The exceptional set of primes $\Sigma_{A}$ in Proposition \ref{control1} admits an explicit description under mild hypothesis.

\begin{prop}\label{explicit1}

Let $\phi \in S_{2}(\Gamma_{0}(N))$ be a non-CM newform and $L_{\phi}$ the corresponding Hecke field.
Let $K$ be an imaginary quadratic field and $\chi$ an unramified finite order anticyclotomic Hecke charcter over $K$ as above.
Let $N = N^+ N^-$ where
   $l|N^+$ (resp. $l|N^-$) if and only if $l|N$ and split in $K$ (resp. non-split in $K$).
Let $p$ be a prime and $\fp_0|p$ a prime of $L_{\phi}$.
Suppose that the following holds.
\begin{itemize}
\item [(1)] $(N, D_{K})=1$,
\item [(2)]  $N^-$ is square-free
   with even number of prime factors,
 \item [(3)]$p\nmid 6ND_{K}$, 
  \item[(4)] There exists an isogeny $\varphi : A \rightarrow A^{\vee}$ with $p \nmid \deg(\varphi)$ and 
    \item [(5)] The Galois representation $\rho_{A, \fp_0}$ has maximal image and the residual representation $\ov{\rho}_{A,\fp_{0}}$ is ramified at $\ell |N^-$ with $\ell^2\equiv 1\mod p$.
\end{itemize}
For all primes $\fp|\fp_0$ of $L=L_{\phi,\chi}$, we then have
$$\ord_{\fp} \CL(A_\chi)=2\cdot \ord_\fp [A_{\chi}(K):\cO_{\phi,\chi}P_{f_{1}}(\chi)]-2\cdot \sum_{\ell|N^+, \chi|_{D_\ell} =1}\ord_p(c_\ell). $$
\end{prop}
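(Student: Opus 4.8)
The plan is to mirror exactly the structure of the proof of Proposition~\ref{explicit0}, replacing the Waldspurger formula with the Gross--Zagier formula of \cite[Thm.~1.5]{CST} and the toric period $P_f(\chi)$ with the Heegner point height $[A_\chi(K):\cO_{\phi,\chi}P_{f_1}(\chi)]$. First I would record an explicit form of the Gross--Zagier formula under the hypothesis that $N^-$ is square-free with an \emph{even} number of prime factors, so that the relevant quaternion algebra $B$ is indefinite and the Shimura curve $X_U$ is a genuine curve. Exactly as in Proposition~\ref{explicit0}, a $J$-translation argument (with $J\in B^\times$ satisfying $Jt=\bar t J$ for $t\in K$) together with the one-dimensionality of $V(\pi_B,\chi)$ yields the analogue of \eqref{J-action}, namely
$$
\frac{\langle P_{f_J}(\chi), P_{f_J}(\chi)\rangle_{A_\chi}}{(f,f)} = \frac{\langle P_f(\chi), P_f(\chi)\rangle_{A_\chi}}{\langle f,f\rangle}\,\chi^{-1}_{\CN^+}(N^+),
$$
so that the $S$-version of the formula in \cite[Thm.~1.6]{CST} collapses (the local ratios $\beta^0(f_{1,v},f_{2,v})/\beta^0(f'_{1,v},f'_{2,v})$ being a fixed rational number depending only on the fixed $S$-type) to
$$
L'(1,\phi,\chi) = \frac{8\pi^2(\phi,\phi)_{\Gamma_0(N)}}{[\cO_K^\times:\BZ^\times]^2\sqrt{|D_K|}}\cdot\frac{\langle P_f(\chi),P_f(\chi)\rangle_{A_\chi}}{\langle f,f\rangle}\,\chi^{-1}_{\CN^+}(N^+).
$$

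Next I would feed this into the formula-part of Conjecture~\ref{BSD} for $A_\chi$ over $K$, obtaining
$$
\CL(A_\chi) = \frac{8\pi^2(\phi,\phi)_{\Gamma_0(N)}}{\sqrt{|D_K|^2}\,\Omega(A_\chi/K)}\cdot\frac{\langle P_f(\chi),P_f(\chi)\rangle_{A_\chi}}{\langle f,f\rangle}\cdot\frac{\Fitt(A_\chi(K)_\tor)\Fitt(A^\vee_{\chi^{-1}}(K)_\tor)}{\prod_v\Fitt(c_v(A_\chi))}\cdot\chi^{-1}_{\CN^+}(N^+),
$$
where the regulator $R(A_\chi/K)$ has been rewritten in terms of the index $[A_\chi(K):\cO_{\phi,\chi}P_{f_1}(\chi)]$ via the relation between the $p$-adic height pairing normalisation and the N\'eron--Tate height of the single Heegner generator; here hypothesis (5) on the Galois image forces $A_\chi(K)$ to have $\cO$-corank one, so the regulator is literally $\langle P_f(\chi),P_f(\chi)\rangle_{A_\chi}/[A_\chi(K):\cO_{\phi,\chi}P_{f_1}(\chi)]^2$ up to a $\fp_0$-unit. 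Then I would take $\fp$-adic valuations term by term: $\Omega(A_\chi/K)=\Omega(A/K)$ as fractional ideals up to primes dividing $N$ by \cite[\S4.4.1]{BCW}; the period ratio $\ord_{\fp_0}\bigl(8\pi^2(\phi,\phi)/(\Omega_A\langle f,f\rangle)\bigr)=\ord_{\fp_0}(\prod_{\ell|N^-}c_\ell)$ by \cite[(12)]{PW}; hypothesis (5) kills the torsion Fitting ideals at $\fp_0$; and hypothesis (4), $p\nmid\deg(\varphi)$ for a polarisation $\varphi:A\to A^\vee$, ensures that replacing $A^\vee_{\chi^{-1}}$ by $A_\chi$ (and the associated height pairing comparison) costs nothing $\fp$-integrally. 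Finally the Tamagawa factor bookkeeping for the Serre tensor gives
$$
\ord_{\fp_0}\Bigl(\prod_v c_v(A_\chi)\Bigr) = \ord_p\Bigl(\prod_{\ell|N^-}c_\ell\prod_{\ell|N^+,\ \chi|_{D_\ell}=1}c_\ell^2\Bigr),
$$
exactly as in Proposition~\ref{explicit0}, and combining everything yields the claimed identity $\ord_\fp\CL(A_\chi)=2\ord_\fp[A_\chi(K):\cO_{\phi,\chi}P_{f_1}(\chi)]-2\sum_{\ell|N^+,\ \chi|_{D_\ell}=1}\ord_p(c_\ell)$, since the contribution of $\prod_{\ell|N^-}c_\ell$ cancels between the period ratio and the Tamagawa product, and the factor $\chi^{-1}_{\CN^+}(N^+)$ is a $p$-adic unit (indeed a root of unity of order prime to $p$ as $p\nmid h_K$).

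\emph{Main obstacle.} The delicate point is the precise comparison between the height pairing $\langle\cdot,\cdot\rangle_{A_\chi}$ appearing in the Gross--Zagier formula (normalised via the chosen isogeny $\alpha$ and the canonical polarisation of the Jacobian of $X_U$) and the N\'eron--Tate regulator $R(A_\chi/K)$ entering the BSD formula, together with the passage from "a single Heegner generator" to the full Mordell--Weil lattice: this is where hypotheses (4) and (5) enter, and one must check that the Manin-constant-type discrepancies and the index $[A_\chi(K):\cO_{\phi,\chi}P_{f_1}(\chi)+A_\chi(K)_\tor]$ versus $[A_\chi(K):\cO_{\phi,\chi}P_{f_1}(\chi)]$ differ only by $\fp_0$-units. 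Granting the optimality of the modular parametrisation $f$ (the $(\cO_\phi,\fp_\phi)$-optimality of \cite[\S3.7]{Z}, already imposed in \S3.1) and the maximal-image hypothesis, this comparison is the indefinite-quaternion analogue of the argument behind \cite[(12)]{PW}, and I expect it to go through with the same exceptional set of primes as in Proposition~\ref{explicit0}; the rest is the routine valuation bookkeeping carried out above.
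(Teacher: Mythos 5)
Your proposal follows essentially the same route as the paper: the $J$-translation argument reduces the $S$-version of the explicit Gross--Zagier formula of \cite{CST} to the displayed identity for $L'(1,\phi,\chi)$, this is substituted into the BSD formula for $\CL(A_\chi)$, and the remaining factors are handled by the same term-by-term valuation bookkeeping as in Proposition \ref{explicit0}. The one point where you diverge is the period/degree ratio: the paper invokes \cite[Proof of Lem.\ 10.1]{Z} (the indefinite-quaternion statement for $\deg$ of the optimal parametrisation) rather than \cite[(12)]{PW}, which is the definite-case statement you cite --- but since you explicitly flag that the indefinite analogue is what is required, this is a matter of locating the correct reference rather than a gap in the argument.
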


\begin{proof}

We first describe an explicit form of Gross--Zagier formula under the hypotheses that $N^-$ is square-free with odd number of prime factors.

We have
\[L'(1,\phi,\chi) =  \frac{8\pi^2(\phi,\phi)_{\Gamma_0(N)}}
	   {[\CO_K^\times:\BZ^\times]^2\sqrt{|D_{K}|}}
	   \frac{ \langle P_f(\chi), P_{f}(\chi)\rangle_{A_{\chi}}}{\langle f,f \rangle}
	   	   \chi^{-1}_{\CN^+}(N^+).\]
	   Here the notation is as in the earlier part of this subsection. This follows from an analysis of an explicit version in \cite[\S1.3]{CST}.
	   As the proof is analogous to the proof of explicit version of the Waldspurger formula in Proposition \ref{explicit0}, we skip the details.

It thus follows that
$$\CL(A_\chi)=\frac{8\pi^2 (\phi,\phi)_{\Gamma_0(N)}}{\sqrt{|D_K|^2}\Omega(A_\chi/K)}\cdot
\frac{\langle P_f(\chi), P_{f} (\chi) \rangle_{A_{\chi}}}{\langle f, f \rangle}\cdot \deg(\varphi)\cdot\frac{\Fitt (A_\chi(K)_\tor)
\Fitt (A^\vee_{\chi^{-1}}(K)_\tor)}{\prod_{v}\Fitt(c_v(A_\chi))}
\cdot \chi^{-1}_{\CN^+}(N^+).
$$

We now consider $\fp$-adic valuation of the terms in the right hand side of the above expression. 




Under the hypotheses, the  test vector $f$ is nothing but the new vector.
From \cite[Proof of Lem. 10.1]{Z},
we thus have
$$
\ord_{\fp_0} \left(\frac{8\pi^2 (\phi, \phi)_{\Gamma_{0}(N)}}{\Omega(A/K) (f, f)}\right)=\ord_{\fp_0}(\prod_{\ell |N^-} c_\ell).
$$

The other terms in the above expression for analytic Sha except the one involving Heegner point are identical to the ones appearing in the proof of Proposition \ref{explicit0}. The proposition thus follows by noting that
$$
\ord_{\fp_0}\left(\prod_v c_{v}(A_\chi)\right)=\ord_p \left( \prod_{\ell |N^-} c_\ell \prod_{\ell |N^+, \chi|_{D_\ell}=1} c_\ell^2\right).
$$


\end{proof}

\begin{remark} 
As in \cite{CST}, there is a striking resemblance among explicit Waldspurger formula and explicit Gross--Zagier formula in our setup. Essentially, the terms other than the ones involving toric period and Heegner point turn out to be identical. 
\end{remark}

We have the following immediate consequence.

\begin{cor}\label{BSDE1}
Let $\phi \in S_{2}(\Gamma_{0}(N))$ be a non-CM newform and $L_{\phi}$ the corresponding Hecke field.
Let $K$ be an imaginary quadratic field and $\chi$ an unramified finite order anticyclotomic Hecke charcter over $K$ as above.
Let $N = N^+ N^-$ where
   $l|N^+$ (resp. $l|N^-$) if and only if $l|N$ and split in $K$ (resp. non-split in $K$).
Let $p$ be a prime and $\fp_0|p$ a prime of $L_{\phi}$.
Suppose that the following holds.
\begin{itemize}
\item [(1)] $(N, D_{K})=1$,
\item [(2)]  $N^-$ is square-free
   with even number of prime factors,
 \item [(3)]$p\nmid 6ND_{K}\cdot h_{K}$, 
  \item[(4)] There exists an isogeny $\varphi : A \rightarrow A^{\vee}$ with $p \nmid \deg(\varphi)$ and 
    \item [(5)] The Galois representation $\rho_{A, \fp_0}$ has maximal image and the residual representation $\ov{\rho}_{A,\fp_{0}}$ is ramified at $\ell |N^-$ with $\ell^2\equiv 1\mod p$.
\end{itemize}

For all primes $\fp|\fp_0$ of $L=L_{\phi}(\chi)$, the $\fp$-part of BSD holds for the Serre tensor $A_{\chi}$ whenever the toric period $P_{f}(\chi)$ is $\fp$-indivisible.
\end{cor}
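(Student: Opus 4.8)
The plan is to obtain Corollary \ref{BSDE1} by concatenating the explicit Gross--Zagier formula of Proposition \ref{explicit1} with the Euler-system bound of Proposition \ref{Ko}, in exact parallel with the analytic-rank-zero derivation of Corollary \ref{BSDE0} from Propositions \ref{explicit0} and \ref{BD}. First I would check that hypotheses (1)--(5) of the corollary imply the hypotheses of Proposition \ref{Ko}: the maximal image of $\rho_{A,\fp_0}$ is (5); $p\nmid 6Nh_K$ is part of (3); the prime-to-$p$ polarization $\varphi$ is (4); and the $\fp$-indivisibility of the Heegner point $P_f(\chi)$ is the standing hypothesis. The only point needing a word is the condition $p\nmid\prod_{\ell\mid N}c_\ell$ built into Proposition \ref{Ko}(2): for $\ell\mid N^-$ (so $\ell\|N$ by (2), multiplicative reduction) the ramification of $\ov{\rho}_{A,\fp_0}$ at $\ell$ in (5) forces $p$ prime to the order of the component group, i.e.\ to $c_\ell$; for $\ell\mid N^+$ the bound follows from the description of Tamagawa numbers of $\GL_2$-type abelian varieties (cf.\ \cite{Kh} and the end of the proof of Proposition \ref{explicit0}) together with $p\nmid 6$. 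Granting this, Proposition \ref{Ko} gives $\Sha(A_\chi/K)[\fp^\infty]=0$, whence $\Fitt_{\cO}(\Sha(A_\chi/K))$ is a $\fp$-unit, and $A_\chi(K)$ has rank one over $\cO_{\phi,\chi}$ with trivial $\fp$-torsion (the torsion statement again from (5), using $p>3$; in particular $\fp$-indivisibility of $P_f(\chi)$ forces it to be non-torsion, hence $L'(1,A_\chi)\neq 0$ by Gross--Zagier, so we are genuinely in the analytic rank-one case).

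Next I would feed this into Proposition \ref{explicit1}, which yields
\[
\ord_\fp\CL(A_\chi)=2\,\ord_\fp\big[A_\chi(K):\cO_{\phi,\chi}P_f(\chi)\big]-2\!\!\sum_{\ell\mid N^+,\ \chi|_{D_\ell}=1}\!\!\ord_p(c_\ell).
\]
Since $P_f(\chi)$ is $\fp$-indivisible and $A_\chi(K)$ localized at $\fp$ is free of rank one over $\cO_{\phi,\chi}$ with no $\fp$-torsion, $P_f(\chi)$ generates $A_\chi(K)\otimes_{\cO_{\phi,\chi}}\cO_{\phi,\chi,\fp}$, so the Heegner index is a $\fp$-unit and its $\ord_\fp$ vanishes; the Tamagawa sum vanishes by the same reduction-type analysis invoked in the previous step. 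Hence $\ord_\fp\CL(A_\chi)=0=\ord_\fp\Fitt_{\cO}(\Sha(A_\chi/K))$, and combined with the (known) analytic continuation of $L(s,A_\chi)$ and the rank equality $\ord_{s=1}L(s,A_\chi)=1=\rank_{\cO_{\phi,\chi}}A_\chi(K)$ coming from Gross--Zagier and Proposition \ref{Ko}, this is precisely the $\fp$-part of BSD for the Serre tensor $A_\chi$.

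The hard part will be the local bookkeeping of the $\fp$-adic contributions rather than any new idea: one must reconcile the index $[A_\chi(K):\cO_{\phi,\chi}P_f(\chi)]$ appearing in Proposition \ref{explicit1} with the ``$\fp$-indivisible Heegner point in $A(H_\chi)$'' of Theorem \ref{main3} (this uses the comparison $A(H_\chi)^\chi\cong A_\chi(K)$ up to the index $[H_\chi:K]$, which is prime to $p$ by (3) since $[H_\chi:K]\mid h_K$), and one must control the $\fp$-divisibility of $c_\ell$ for $\ell\mid N^+$ with $\chi|_{D_\ell}=1$ --- for the finitely many $\chi$ for which this condition holds one either invokes $p\nmid\prod_{\ell\mid N}c_\ell$ (as is done for the final Theorem \ref{main2}) or argues directly from the twisted reduction type at $\ell$. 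Beyond that, the argument is a direct assembly of Propositions \ref{explicit1} and \ref{Ko}, mirroring the proof of Corollary \ref{BSDE0}.
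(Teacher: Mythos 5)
Your proposal is correct and follows essentially the same route the paper intends: Corollary \ref{BSDE1} is obtained by combining the Euler-system vanishing of $\Sha(A_\chi/K)[\fp^\infty]$ from Proposition \ref{Ko} with the explicit valuation formula of Proposition \ref{explicit1}, exactly mirroring how Corollary \ref{BSDE0} follows from Propositions \ref{BD} and \ref{explicit0}. Your additional bookkeeping (verifying $p\nmid\prod_\ell c_\ell$ from (5) and $p\nmid 6$, and reconciling $\fp$-indivisibility in $A(H_\chi)$ with the Heegner index in $A_\chi(K)$ via $p\nmid h_K$) only makes explicit steps the paper leaves implicit in calling the corollary an ``immediate consequence.''
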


\section{Main results}
In this section, we describe the main results. 

\subsection{Torus embedding}
In this subsection, we describe a result on the existence of a suitable torus embedding into a quaternion algebra. The embedding plays a role in the proof of the main results in \S5.2.

Let us first introduce the setup.

Let $\phi\in S_2(\Gamma_0(N))$ be a non-CM newform and $L_{\phi}$ the corresponding Hecke field. 
Let $\pi$ the cuspidal automorphic representation of $\GL_{2}(\BA)$ corresponding to the newform 
$\phi$ so that we have the equality
$$L(s,\pi)=L(s,\phi)$$
of complex L-functions. 
Let $A$ the abelian variety corresponding to $\phi$ such that $\cO_{L_{\phi}} \subset \End(A)$.

Let $B$ be a quaternion algebra over $\BQ$ such that there exists an irreducible automorphic  representation $\pi_{B}$ on $B_\BA^\times$ whose Jacquet-Langlands transfer is the automorphic representation $\pi$ of $\GL_2(\BA)$ associated to $\phi$.

Let $S=\Supp (N \infty)$.  Let $K_{0, S}\subset B_S$ be a $\BQ_S$-subalgebra such that $K_{0, \infty}=\BC$ and $K_{0, v}/\BQ_v$ is semi-simple quadratic.
Fix a maximal order $R^{(S)}$ of $B_{\BA}^{(S)}\cong M_2(\BA^{(S)})$.
Let $U^{(S)}=R^{(S)\times}$. Note that $U^{(S)}$ is a maximal compact subgroup of
$B_{\BA}^{\times (S)}\cong \GL_2(\BA^{(S)})$.

\begin{lem}
\label{embedding}
Let $K$ be an imaginary quadratic field with $K_S\cong K_{0, S}$.
Then, there exists an embedding $\iota: K \hookrightarrow B$ such that
\begin{itemize}
\item[(i).] $\iota(K_{S}) = K_{0,S} $ and
\item[(ii).] $\iota(\BA_{K}^{S}) \cap U=\iota(\cO_{K}^{S})$.
\end{itemize}
\end{lem}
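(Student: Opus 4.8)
The plan is to turn the lemma into a local--global statement about a single element of $B$. Write $K=\BQ(\sqrt{-d})$ and fix a generator $\theta_{K}$ of $\cO_{K}$ over $\BZ$ with reduced characteristic polynomial $x^{2}-tx+n$; giving an embedding $\iota:K\hookrightarrow B$ is then the same datum as an element $\theta\in B$ with $\mathrm{Trd}(\theta)=t$ and $\mathrm{Nrd}(\theta)=n$, equivalently, after subtracting $t/2$, a vector of prescribed reduced norm in the trace--zero space $B^{0}$. Under this dictionary condition (i) reads $\theta\in K_{0,S}$ --- and the hypothesis $K_{S}\cong K_{0,S}$ already pins the $S$-component of $\theta$ down to one of the two elements of $K_{0,S}$ with characteristic polynomial $x^{2}-tx+n$ --- while condition (ii) reads $\iota(\widehat{\cO}_{K}^{(S)})\subset R^{(S)}$, i.e.\ that $\iota$ be an optimal embedding for the maximal order $R^{(S)}$ at every finite $v\notin S$.

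First I would verify solvability place by place. At $v\in S$ the isomorphism $K_{v}\cong K_{0,v}$ supplies an element of $K_{0,v}$ with characteristic polynomial $x^{2}-tx+n$, and at $\infty$ this is the fixed copy of $\BC$ in $B_{\infty}$. At a finite $v\notin S$ the algebra $B_{v}\cong M_{2}(\BQ_{v})$ is split (as $\disc(B)\mid N$) and $R_{v}$ is maximal, so the classical theory of optimal embeddings yields a $\theta_{v}\in R_{v}$ with reduced characteristic polynomial $x^{2}-tx+n$ generating a copy of $\cO_{K_{v}}$. Since every place at which $B$ ramifies lies in $S$ and is non-split in $K$ --- a division algebra has no zero divisors, so $K_{0,v}\subset B_{v}$ being a field forces $K_{v}$ to be a field --- the standard local--global principle for embedding a quadratic field into a quaternion algebra produces at least one embedding $\iota_{0}:K\hookrightarrow B$.

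Next I would upgrade $\iota_{0}$ to one satisfying (i) and (ii) simultaneously. By Skolem--Noether over $\BQ_{S}$ the subalgebra $\iota_{0}(K)_{S}\subset B_{S}$ is $B_{S}^{\times}$-conjugate to $K_{0,S}$, and over each finite $v\notin S$ it is $B_{v}^{\times}$-conjugate to the chosen optimal local model; the task is to realize all of these conjugations at once by a single element of $B^{\times}$. The plan is to use strong approximation for the norm-one group $B^{1}$ --- valid with respect to $S$ whenever $\prod_{v\in S}B^{1}(\BQ_{v})$ is non-compact, which holds as soon as $S$ contains a finite prime at which $B$ is split --- to conjugate $\iota_{0}$ into optimal position at the finitely many finite $v\notin S$ where it fails to be, and then to exploit weak approximation on $B^{\times}$ together with the normalizer structure $N_{B_{v}^{\times}}(K_{0,v})=K_{0,v}^{\times}\sqcup j_{v}K_{0,v}^{\times}$ for $v\in S$ to bring the $S$-component onto $K_{0,S}$ exactly. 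The condition $K\neq\BQ(i),\BQ(\omega)$ (part (iv) of Definition \ref{Theta}) is used only to ensure $\cO_{K}^{\times}=\{\pm1\}$, which keeps the optimality bookkeeping clean.

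The hard part will be precisely this last step: forcing $\iota(K_{S})=K_{0,S}$ on the nose while keeping $\iota$ optimal at all finite $v\notin S$. Conjugation by a global element only approximates the prescribed local behaviour, and the admissible conjugators form cosets of the normalizers $N_{B_{v}^{\times}}(K_{0,v})$, which are not open in $B_{v}^{\times}$, so weak approximation alone does not suffice; one must track these cosets carefully and feed them into strong approximation for $B^{1}$, equivalently for the spin group of the ternary form $\mathrm{Nrd}\vert_{B^{0}}$. The degenerate configuration in which $\prod_{v\in S}B^{1}(\BQ_{v})$ is compact --- for instance $B$ definite and ramified at every prime dividing $N$ --- has to be disposed of by a separate, more hands-on conjugation argument. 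Once a global $\iota$ with the two required properties is produced, the lemma follows at once.
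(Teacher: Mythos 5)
Your overall strategy coincides with the paper's: produce one embedding $\iota_0$ that is optimal at every finite $v\notin S$, conjugate $\iota_0(K_v)$ onto $K_{0,v}$ by local Skolem--Noether elements $b_v$ for $v\in S$, and then realize these local conjugations by a single global element. But the step you yourself flag as ``the hard part'' --- getting $\iota(K_S)=K_{0,S}$ exactly while keeping optimality outside $S$ --- is the entire content of the lemma, and your proposed mechanism for it (approximating into cosets of the normalizers $N_{B_v^\times}(K_{0,v}^\times)$, which as you correctly note are not open) is neither completed nor the one that works. The missing idea is a double-coset decomposition rather than an approximation-into-normalizers argument: one shows
$$
B_{\BA}^{\times}=B^{\times}\cdot U^{(S)}\prod_{v\in S}K_{0,v}^{\times},
$$
writes the tuple $b_S=(b_v)_{v\in S}$ (extended by $1$ outside $S$) as $b\cdot u$ with $b\in B^{\times}$ and $u\in U^{(S)}\prod_{v\in S}K_{0,v}^{\times}$, and sets $\iota=b^{-1}\iota_0 b$. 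The point is that at $v\in S$ the component $u_v$ lies in $K_{0,v}^{\times}$, which \emph{centralizes} $K_{0,v}$, so $b^{-1}\iota_0(K_v)b=u_v K_{0,v}u_v^{-1}=K_{0,v}$ on the nose; while at finite $v\notin S$ the local equation $1=b\,u_v$ forces $b=u_v^{-1}\in R_v^{\times}$, so conjugation preserves $R_v$ and hence optimality. No openness of normalizers is needed anywhere: exactness at $S$ comes from centralizing, not from approximating.

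That said, your worry about the configuration in which $\prod_{v\in S}B^{1}(\BQ_v)$ is compact is legitimate: the displayed decomposition is deduced from strong approximation for $B^{1}$ relative to $S$, which requires $B$ to be split at some place of $S$ (and, to pass from $B^{1}$ to $B^{\times}$, a reduced-norm surjectivity check on $U^{(S)}\prod_{v\in S}K_{0,v}^{\times}$); the paper does not address the definite case in which $B$ ramifies at every prime dividing $N$. So you have correctly located the delicate points, but you have not supplied the argument that closes the main one, and the coset-tracking route you sketch should be replaced by the decomposition above.
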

\begin{proof}
Let $\iota_{0}:K \hookrightarrow B$ be an embedding such that
$$
\iota(\BA_{K}^{S}) \cap U = \iota(\widehat{\cO}_{K}^{S}).
$$
For $v \in S$, we have $\iota(K_{v}) \simeq K_{v}$. Thus, there exists $b_{v}\in B_{v}^{\times}$
such that
$$
b_{v}^{-1}\iota(K_{v})b_{v}=K_{0,v}.
$$
Let $b_{S}=(b_{v})_{v \in S}\in B_{S} \subset B_{\BA}^{\times}$.

In view of the choice of level $U$, strong approximation for the subgroup $B^{1} \subset B^\times$ consisting of elements with identity norm implies that 
$$
B_{\BA}^{\times}=B^{\times}\cdot U \prod_{v \in S} K_{0,v} .
$$
There exists $b \in \widehat{B}^{\times}$ such that
$b_{S}=bu$ with $b \in B^{\times}$ and $u \in U \prod_{v \in S} K_{0,v} $.
We can take $\iota$ to be $\iota_{0}^{b}:=b^{-1}\iota_{0}b$.
\end{proof}

Let $p$ 
be an odd prime.
\begin{defn}\label{Theta'}
Let $\Theta_{S}$ denote the set of imaginary quadratic fields $K$ such that
\begin{itemize}
\item[(i)] $K_{S}\cong K_{0,S}$ and
\item[(ii)] $p\nmid h_{K}$.
\end{itemize}
\end{defn}
For any imaginary quadratic field $K$ with $K_S\cong K_{0, S}$, we fix an embedding $\iota_K$ as above from now.

\subsection{Main results} In this section, we describe and prove the main results. The proof is essentially a compilation of the results in \S2, \S3 and \S4.1.

Let the notation and assumptions be as in \S3.1. In particular, $\phi \in S_{2}(\Gamma_{0}(N))$ is a non-CM newform and $L_\phi$ the Hecke field. Let $A$ be the corresponding abelian variety with $\cO_{L_{\phi}} \subset \End(A)$.

Let $p$ be an odd prime. As in the introduction, we fix the embeddings $\iota_{\infty}:\overline{\BQ}\hookrightarrow \BC$ and
$\iota_{p}:\overline{\BQ}\hookrightarrow \BC_p $. 
For $\Theta$ in \S4.1, let $K \in \Theta$. For $\chi\in \wh{\Cl}_{K}$,
Let $\fp_\chi$ be the prime above $p$ of $L_{\phi,\chi}$ determined via the embedding $\iota_p$.

For $\chi\in \wh{\Cl}_{K}$, let
$A_\chi=A_K\otimes_\BZ \BZ[\chi]$ denote the Serre tensor where the absolute Galois group $G_K$ acts on $\BZ[\chi]$ via $\chi$ (\cite{MRS}). Then $A_\chi$ is an abelian variety over $K$ with the following properties:
 $$\Sha(A_\chi/K)\otimes_\BZ \BZ[h_{K}^{-1}]\cong \Sha(A/H_{K,c})^\chi\otimes_\BZ \BZ[h_{K}^{-1}], \qquad L(s, A_\chi)=L(s, A, \chi)\in \BC\otimes_\BQ L_{\phi,\chi}.$$
 Let $\CL(A_\chi)$ be the analytic Sha of $A_\chi$ (part (2) of Conjecture \ref{BSD}), which is conjectured to be a non-zero integral ideal of $L_{\phi}(\chi)$ as predicted by the BSD conjecture for the abelian variety $A_{\chi}$ over $K$.
  \begin{defn}
  A character $\chi\in \wh{\Cl}_{K}$ is said to be $p$-regular for $A$ if the following conditions hold.
 \begin{itemize}
 \item $\rank_{\cO_{L_{\phi,\chi}}} A_\chi(K)=\ord_{s=1}L(s, A_\chi/K)$, and
\item  Let $\fp_\chi|p$ be the prime ideal of $\cO_{L_{\phi,\chi}}$ induced by the fixed embedding $\iota_p$, then
\begin{enumerate}
\item[(a)]  $\Sha(A_\chi)$ is finite with trivial $\fp_\chi$-part, and
        \item[(b)]  $\CL(A_\chi)$ has trivial $\fp_\chi$-part.
            \end{enumerate}
            \end{itemize}
            \end{defn}
In particular, for a $p$-regular $\chi\in \wh{\Cl}_{K}$, the $\fp_\chi$-part of the BSD conjecture holds for the Serre tensor $A_\chi$.

Our result regarding $p$-regular characters is the following.
\begin{thm}\label{main'1}
Let $A$ be a non-CM $\GL_2$-type abelian variety over the rationals. Then, there exists an explicit finite subset $\Sigma_{A}$ of primes only dependent on $A$, such that for any prime $p \notin \Sigma_{A}$ and $\epsilon > 0$ we have
$$
\#\bigg{\{} \chi \in \widehat{\Cl}_{K} \bigg{|}\ \chi\ \text{is $p$-regular for $A$}\ \bigg{\}}\gg_\epsilon (\ln |D_K|)^{1-\epsilon}.
$$
Here $K$ varies over imaginary quadratic fields satisfying
\begin{itemize}
\item[(i)] $p\nmid h_K$, and
\item[(ii)] either $p$ splits in $K$ or the root number $\epsilon(A, \chi_0)$ of the Rankin--Selberg convolution $L(s, A, \chi_0)$ equals $+1$ for some $\chi_0\in \wh{\Cl}_K$.
\end{itemize}

\end{thm}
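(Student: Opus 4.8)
The plan is to reduce Theorem \ref{main'1} to the two self-dual cases, root number $+1$ and root number $-1$, and then to quote the machinery of \S2 and \S3 in each case. The first step is to observe that for $\chi \in \wh{\Cl}_K$ the Rankin--Selberg convolution $L(s,A,\chi)$ is self-dual, so its global root number $\epsilon(A,\chi)$ is $\pm 1$, and since $\chi$ is anticyclotomic the root number depends only on the $S$-component $\chi_S$ (the local root numbers at $v\notin S$ are trivial as $\chi$ is unramified there, $A$ has good reduction, and $\eta_v$ is unramified). Thus $\wh{\Cl}_K$ partitions into finitely many classes according to $\chi_S$, and each class has a well-defined sign; writing $\fX_K^{\pm}$ for the union of the plus (resp. minus) classes, one of the two has size $\gg |\Cl_K|$ — in fact a lower bound for the size of $\fX_K^{\pm}$ will follow from the same equidistribution/density input used below, but for the theorem it suffices to work one $\chi_{0,S}$-class at a time.

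Next, for a fixed local type $\chi_{0,S}$ one chooses the quaternion algebra $B$ via (LC2): if the sign is $+1$, $B$ is definite and the setup of \S2 applies; if the sign is $-1$ and $p$ splits in $K$, $B$ is indefinite and the setup of \S3 applies; the hypothesis (ii) of the theorem is exactly what guarantees that at least one such workable $\chi_{0,S}$-class exists (either $p$ splits, handling every minus class, or some $\chi_0$ has sign $+1$). Having fixed $B$, Lemma \ref{embedding} supplies a torus embedding $\iota_K : K \hookrightarrow B$ with the integrality property (ii) of Definition \ref{Theta}, so that $K \in \Theta_S$ whenever $p \nmid h_K$; this is where \S4.1 enters. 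One then takes the $\chi_{0,S}$-toric test vector $f$ from Lemma \ref{TV0} (definite case) or Lemma \ref{TV1} (indefinite case), normalized $\fp$-primitive, and forms the toric periods $P_f(\chi)$ or, in the indefinite case, the Heegner points $P_f(\chi)$ together with the $p$-adic toric periods $P_g(\chi)$ for $g = d^{-1}(f^{(p)})$.

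The heart of the argument is then a direct quotation. In the definite case: Theorem \ref{toric1} gives $\#\{\chi\in\fX_K : v_p(P_f(\chi))=0\}\gg_\epsilon (\ln|D_K|)^{1-\epsilon}$; Proposition \ref{BD} (after possibly enlarging the exceptional set to ensure (irr) is implied by surjectivity/maximality of $\rho_{\phi,\fp_0}$) shows that $\fp_\chi$-invertibility of $P_f(\chi)$ forces $\Sha(A_\chi/K)[\fp_\chi^\infty]=0$ for $|D_K|\gg 0$; Proposition \ref{control0} shows $\CL(A_\chi)/P_f(\chi)^2$ is prime to $\fp_0$ outside a finite set, hence $\CL(A_\chi)$ has trivial $\fp_\chi$-part. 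Since non-vanishing of $P_f(\chi)$ also forces analytic rank zero (so $\rank_{\cO}A_\chi(K)=0=\ord_{s=1}L(s,A_\chi)$ by the rank-zero BSD input encoded in Waldspurger), each such $\chi$ is $p$-regular, and we are done in the $+1$ case. In the $-1$, $p$-split case one argues identically with Theorem \ref{toric2} $\to$ Theorem \ref{main3} ($\fp$-indivisibility of Heegner points) $\to$ Proposition \ref{Ko} (for $\Sha$) and Proposition \ref{control1} (for analytic Sha), using the $p$-adic Waldspurger formula Theorem \ref{log} to pass from $P_g(\chi)$ to $P_f(\chi)$ and the Gross--Zagier formula to get analytic rank one; here one additionally needs a polarization of $A$ of degree prime to $p$, which holds outside a finite set of $p$.

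The main obstacle, and the place requiring genuine care rather than citation, is the bookkeeping of the exceptional set $\Sigma_A$: one must check that the finitely many conditions appearing across Propositions \ref{BD}, \ref{control0}, \ref{Ko}, \ref{control1} (maximality of $\rho_{A,\fp}$, $p\nmid 6N\prod_\ell c_\ell d_\ell$, $p\nmid I$, $p\nmid\deg(\varphi)$, $\overline{\rho}_{\phi,\fp_0}$ absolutely irreducible, finiteness of torsion via Proposition \ref{torsion}, and the conditions needed for the Michel/Zariski density inputs) are all of the form ``$p$ outside an explicit finite set depending only on $A$,'' uniformly over the pairs $(K,\chi)$, and in particular independently of which $\chi_{0,S}$-type and which sign case one is in. A secondary subtlety is that the equidistribution/Zariski-density lower bounds of Theorems \ref{toric1} and \ref{toric2} are stated for a fixed $\chi_{0,S}$-class, so to conclude the stated bound for all of $\wh{\Cl}_K$ one simply notes that some single class already contributes $\gg_\epsilon(\ln|D_K|)^{1-\epsilon}$ characters, which is all the theorem asserts. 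The element-counting argument itself — that a Galois-stable generating set $\Xi_K$ with $\langle\Xi_K\rangle$ of index $\ll|D_K|^{1-\delta}$ has size $\gg_\epsilon(\ln|D_K|)^{1-\epsilon}$ — is Lemma \ref{stability} and needs no reproving.
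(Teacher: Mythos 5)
Your proposal follows essentially the same route as the paper's own proof: partition $\wh{\Cl}_K$ by the local type $\chi_{0,S}$ and the sign of the root number, choose the quaternion algebra via (LC2), invoke Lemma \ref{embedding} for the torus embedding, and then quote Theorem \ref{toric1}, Proposition \ref{BD} and Proposition \ref{control0} in the definite case and Theorem \ref{main3} (via Theorems \ref{toric2} and \ref{log}), Proposition \ref{Ko} and Proposition \ref{control1} in the indefinite case. Your added remarks on the uniformity of the exceptional set $\Sigma_A$ and on the rank clause of $p$-regularity are consistent with, and if anything slightly more explicit than, what the paper records.
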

\begin{proof} 
Let $S=\supp(N\infty)$ as before. 

Let $K_{0, S}$ be a $\BQ_S$-subalgebra such that $K_{0, \infty}=\BC$ and $K_{0, v}/F_v$ is semi-simple quadratic. 
Let $U_{0,S}$ be as in \S2.1. 
Suppose that there exists a finite order character $\chi_{0, S}: U_{0, S}\lra \ov{\BQ}^\times$ with conductor one satisfying (LC1) in \S2.1 such that 
$$
\epsilon(\pi, \chi_{0,v})\chi_{0,v}\eta_{0,v}(-1)=\pm1
$$
for $v \in S$. 

From now, let $K \in \Theta_{S}$ (Definition \ref{Theta'}).
Let $\fX_{K,\chi_{0,S}}$ denote the set of finite order Hecke characters over $K$ satisfying the conditions (i) and (iii) in Definition \ref{char}.
We have 
$$
\fX_{K,\chi_{0,S}}= \fX_{K,\chi_{0,S}}^{+} \bigcup \fX_{K,\chi_{0,S}}^{-}.
$$
Here $\fX_{K,\chi_{0,S}}^{+}$ (resp. $\fX_{K,\chi_{0,S}}^{-}$) denotes the subset of Hecke characters $\chi$ over $K$ such that $\epsilon(\pi,\chi)=1$ (resp. $\epsilon(\pi,\chi)=-1$). 

Note that
$$
\Theta_{S} = \Theta_{S}^{+} \bigcup \Theta_{S}^{-}.
$$
Here $\Theta_{S}^{+}$ (resp. $\Theta_{S}^{-}$) denotes the subset of imaginary quadratic fields $K$ such that there exists $\chi \in \fX_{K,\chi_{0,S}}^{+}$ (resp. $\fX_{K,\chi_{0,S}}^{-}$) with $\chi_{S}=\chi_{0,S}$ for some $\chi_{0,S}$.
The above union need not be disjoint.


Note that there are only finitely many choices for $\chi_{0,S}$. For any such $\chi_{0,S}$, we consider a quaternion algebra $B$ over $\BQ$ with $\ram(B) \subset S$ such that 
$$
\epsilon(\pi, \chi_{0,v})\chi_{0,v}\eta_{0,v}(-1)=\epsilon(B_{v})
$$
for $v \in S$. 

As $K$ varies over imaginary quadratic extensions satisfying the hypotheses in the Theorem, there are only finitely many choices for the characters $\chi_{0,S})$. So, it suffices to consider the case of a fixed character $\chi_{0,S}$.

In the case of root number $1$, the result thus follows from Lemma \ref{embedding}, Theorem \ref{toric1}, Proposition \ref{BD} and Proposition \ref{control0}.

In the case of root number $-1$, the result thus follows from Lemma \ref{embedding}, Theorem \ref{main3}, Proposition \ref{Ko} and Proposition \ref{control1}.
\end{proof}

\begin{remark} In view of the proof, the set of exceptional primes $\Sigma_A$ arises from the ones in Proposition \ref{BD} (resp. Proposition \ref{Ko}) and Proposition \ref{control0} (resp. Proposition \ref{control1}). Note that the set $\Sigma_A$ can be larger than the set of exceptional primes for the $p$-minimality of $\Sha$ or analytic Sha when considered individually.
\end{remark}

Based on the proof, we have the following explicit version of $p$-minimality in individual cases.

\begin{cor}\label{explicit'0}
Let $\phi \in S_{2}(\Gamma_{0}(N))$ be a non-CM newform with $N$ square-free and $L_{\phi}$ the corresponding Hecke field.
Let $p$ be a prime and $\fp_\phi|p$ a prime of $L_{\phi}$.
Let $A$ be an abelian variety corresponding to $\phi$ such that $\cO_{L_{\phi}} \subset \End(A)$. Suppose that the following holds.
 \begin{enumerate}
 \item The Galois representation $\rho_{A, \fp_\phi}$ has maximal image and
 \item $p\nmid 6N\cdot (\prod_\ell c_\ell)$ with $c_\ell$ the Tamagawa number at $\ell$.
 \end{enumerate}
For any $\epsilon > 0$, we have
$$
\#\big{\{} \chi \in \widehat{\Cl}_{K} \big{|}\ \chi\ \text{is $p$-regular for $A$}\ \big{\}}\gg_\epsilon (\ln |D_K|)^{1-\epsilon}.
$$
Here $K$ varies over imaginary quadratic fields satisfying

\begin{itemize}
\item[(i)] $p \nmid h_{K}$
\item[(ii)] $(N, D_{K})=1$,
\item[(iii)]  $N^-$ has odd number of prime factors and $\ord_\ell (N^{-})=1$ for any prime $\ell |N$.
   Here $N = N^+ N^-$ where
   $\ell |N^+$ (resp. $\ell|N^-$) if and only if $\ell|N$ and split in $K$ (resp. non-split in $K$).
 \item[(iv)] $p\nmid 6ND_{K}$ and
  \item[(v)] The residual representation $\ov{\rho}_{A,\fp_{\phi}}$ is ramified at $\ell |N^-$ with $\ell^2\equiv 1\mod p$.
\end{itemize}

\end{cor}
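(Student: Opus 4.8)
The plan is to observe that Corollary \ref{explicit'0} is a specialization of Theorem \ref{main'1} to the case of square-free conductor, together with the explicit descriptions of the exceptional set already recorded in \S2.4 and \S3.6. Concretely, I would run through the partition argument from the proof of Theorem \ref{main'1} and track, under the stated hypotheses, which of the auxiliary constants $d_\ell$, $I$, $\deg(\varphi)$, and the Tamagawa corrections actually contribute to the exceptional set.

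First I would fix $S=\Supp(N\infty)$ and a local type $\chi_{0,S}$ with $\epsilon(\pi,\chi_{0,v})\chi_{0,v}\eta_{0,v}(-1)=\pm 1$, and let $B$ be the quaternion algebra with $\ram(B)\subset S$ matching these signs via (LC2). Since $N$ is square-free, for every $\ell\mid N$ the local representation $\pi_\ell$ is Steinberg, so $d_\ell=1$ by the remark following its definition in \S3.5, and moreover the ideal $I$ of \cite[App.~B]{N0} is governed only by $\ell+1-a_\ell$ with $a_\ell=\pm 1$, hence $I\mid \prod_{\ell\mid N}(\ell^2-1)$; combined with hypothesis (2) that $p\nmid 6N\prod_\ell c_\ell$ and $p\nmid D_K$, the hypotheses of Proposition \ref{BD} (root number $+1$ case) and of Proposition \ref{Ko} (root number $-1$ case, where the polarization $\varphi$ can be taken of degree prime to $p$ since $N$ is square-free and $\rho_{A,\fp_\phi}$ has maximal image) are all met. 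The torus embedding is supplied by Lemma \ref{embedding}, and the $p$-indivisibility input comes from Theorem \ref{toric1} in the definite case and Theorem \ref{main3} in the indefinite case.

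Next I would invoke the explicit analytic-Sha computations: in the rank zero case Proposition \ref{explicit0} gives $\ord_\fp\CL(A_\chi)=2\ord_\fp P_f(\chi)-2\sum_{\ell\mid N^+,\,\chi|_{D_\ell}=1}\ord_p(c_\ell)$, and in the rank one case Proposition \ref{explicit1} gives the analogous identity with the Heegner index in place of $P_f(\chi)$; hypotheses (iii)--(v) of the corollary are exactly hypotheses (1)--(5) of those two propositions (square-freeness of $N$ forces $N^-$ square-free, and the odd/even parity of the number of prime factors of $N^-$ splits the two cases, controlled by whether $\epsilon(\pi,\chi)=+1$ or $-1$). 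Since $p\nmid c_\ell$, the correction terms $\sum\ord_p(c_\ell)$ vanish, so $\CL(A_\chi)$ is a $\fp$-unit precisely when the toric period (resp.\ Heegner point) is $\fp$-indivisible. Combined with the vanishing of $\Sha(A_\chi)[\fp^\infty]$ from Proposition \ref{BD} or Proposition \ref{Ko} and with $p\nmid h_K$, this shows that every $\chi$ with $\fp_\chi$-indivisible period/Heegner point is $p$-regular for $A$. The lower bound $\gg_\epsilon(\ln|D_K|)^{1-\epsilon}$ on the number of such $\chi$ is then Theorem \ref{toric1} or Theorem \ref{main3}, whose output is phrased exactly in this form via Lemma \ref{stability}.

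The main obstacle, as in the parent theorem, is the bookkeeping of exceptional primes: one must verify that under square-freeness of $N$ the constants $I$, $d_\ell$ and $\deg(\varphi)$ genuinely disappear into the already-excluded set $6N\prod_\ell c_\ell$, and that the two root-number regimes exhaust $\wh{\Cl}_K$ after passing to the finitely many local types $\chi_{0,S}$. I do not expect any genuinely new ingredient beyond what \S2--\S3 supply; the work is to assemble Lemma \ref{embedding}, Theorems \ref{toric1} and \ref{main3}, Propositions \ref{BD}, \ref{Ko}, \ref{explicit0} and \ref{explicit1} into the single clean statement, being careful that the hypothesis ``$\ov{\rho}_{A,\fp_\phi}$ ramified at $\ell\mid N^-$ with $\ell^2\equiv 1\bmod p$'' is used only to kill the residual Tamagawa contribution at $N^-$ while the maximal-image hypothesis handles absolute irreducibility of $A_\chi[\fp]$ and the level-lowering compatibility uniformly in $K$.
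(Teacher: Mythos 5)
Your proposal follows essentially the same route as the paper: the printed proof is exactly the assembly you describe, namely the partition argument from the proof of Theorem \ref{main'1} combined with Proposition \ref{BD} and Proposition \ref{explicit0}. Two points of bookkeeping are off, though neither is fatal. First, for this corollary hypothesis (iii) forces $N^-$ to have an \emph{odd} number of prime factors, so $\epsilon(\phi,\chi)=+1$ for every $\chi\in\widehat{\Cl}_K$ and only the definite/toric-period side is ever invoked; the Heegner-point machinery (Theorem \ref{main3}, Propositions \ref{Ko} and \ref{explicit1}) belongs to Corollary \ref{explicit'1}, and your claim that a polarization of degree prime to $p$ ``can be taken'' because $N$ is square-free and the image is maximal is not justified --- in the paper that is an explicit extra hypothesis of Corollary \ref{explicit'1}, not a consequence. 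Second, your treatment of the ideal $I$ misreads its definition: its generators are $\ell+1-a_\ell$ for primes $\ell\nmid N$ lying in the kernel of $\widehat{\BQ}^\times\rightarrow\BQ^\times_+\backslash\widehat{\BQ}^\times/Nrd(U)$, so the assertion $I\mid\prod_{\ell\mid N}(\ell^2-1)$ does not make sense. The correct reason $p\nmid I$ is that $\ov{\rho}_{A,\fp_\phi}$ is absolutely irreducible (a consequence of maximal image), hence non-Eisenstein, so some $\ell+1-a_\ell$ with $\ell\nmid N$ is a $\fp_\phi$-unit. With these two corrections your argument coincides with the paper's.
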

\begin{proof}
This follows from the proof of Theorem \ref{main'1} along with Proposition \ref{BD} and Proposition \ref{explicit0}.
\end{proof}

\begin{cor}\label{explicit'1}
Let $\phi \in S_{2}(\Gamma_{0}(N))$ be a non-CM newform with $N$ square-free and $L_{\phi}$ the corresponding Hecke field.
Let $p$ be a prime and $\fp_\phi|p$ a prime of $L_{\phi}$.
Let $A$ be an abelian variety corresponding to $\phi$ such that $\cO_{L_{\phi}} \subset \End(A)$. Suppose that the following holds.
 \begin{enumerate}
 \item The Galois representation $\rho_{A, \fp_\phi}$ has maximal image,
 \item $p\nmid 6N\cdot (\prod_\ell c_\ell)$ with $c_\ell$ the Tamagawa number at $\ell$ and
 \item There exists a polarisation $\varphi:A \rightarrow A^{\vee}$ with $p \nmid \deg(\varphi)$.

 \end{enumerate}
For any $\epsilon > 0$, we have
$$
\#\big{\{} \chi \in \widehat{\Cl}_{K} \big{|}\ \chi\ \text{is $p$-regular for $A$}\ \big{\}}\gg_\epsilon (\ln |D_K|)^{1-\epsilon}.
$$
Here $K$ varies over imaginary quadratic fields satisfying

\begin{itemize}
\item[(i)] $p \nmid h_{K}$
\item[(ii)] $p$ split in $K$ and $(N, D_{K})=1$,
\item[(iii)]  $N^-$ is has even number of prime factors.
   Here $N = N^+ N^-$ where
   $\ell| N^+$ (resp. $\ell |N^-$) if and only if $\ell |N$ and split in $K$ (resp. non-split in $K$).
 \item[(iv)] $p\nmid 6ND_{K}$ and
  \item[(v)] The residual representation $\ov{\rho}_{A,\fp_{\phi}}$ is ramified at $\ell |N^-$ with $\ell^2\equiv 1\mod p$.

 \end{itemize}

\end{cor}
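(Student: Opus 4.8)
The plan is to run the proof of Theorem \ref{main'1} in the root number $-1$ (Heegner point) case, now keeping track of the explicit terms supplied by Proposition \ref{explicit1}. First I would fix the local data. Since $N$ is square-free, $\phi$ is Steinberg at every $\ell\mid N$; for each $K$ in the family the conditions $(N,D_K)=1$ and (iii) single out an $S$-type $\chi_{0,S}$ with $\epsilon(\phi,\chi)=-1$ for all $\chi\in\fX_{K,\chi_{0,S}}$, and determine a fixed indefinite quaternion algebra $B/\BQ$ as in \S3.1 whose ramification set lies in $S$ and is controlled by $\chi_{0,S}$, carrying the Jacquet--Langlands transfer $\pi_B$ of $\phi$. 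There are finitely many such types, so it suffices to bound the number of $p$-regular $\chi$ inside a single $\fX_K=\fX_{K,\chi_{0,S}}$. For each $K$ I would use Lemma \ref{embedding} to fix the torus embedding $\iota_K\colon K\hookrightarrow B$; hypotheses (i), (ii), (iv) put $K$ in the set $\Theta$ of \S3.1, and maximality of $\rho_{A,\fp_\phi}$ makes $\ov{\rho}_{\phi,\fp_\phi}$ absolutely irreducible.

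Next I would invoke the horizontal $\fp_\chi$-indivisibility of Heegner points. As $p$ splits in $K$, Theorem \ref{main3} gives, for every $\epsilon>0$,
\[
\#\bigl\{\chi\in\fX_K\ \big|\ P_f(\chi)\ \text{is }\fp_\chi\text{-indivisible in }A(H_\chi)\bigr\}\gg_\epsilon (\ln|D_K|)^{1-\epsilon},
\]
so it remains to show that each such $\chi$ is $p$-regular for $A$. Because $\rho_{A,\fp_\phi}$ has maximal image, only finitely many imaginary quadratic fields lie inside $\BQ(A[\fp_\phi^\infty])$, so for $|D_K|\gg0$ we have $A_\chi(K)[\fp_\chi]=0$; a $\fp_\chi$-indivisible $P_f(\chi)$ is then non-torsion, whence the Gross--Zagier formula underlying Theorem \ref{main3} forces $L'(1,A_\chi)\neq0$. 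Proposition \ref{Ko} now applies---its hypotheses being exactly maximal image (1), $p\nmid 6N\prod_\ell c_\ell\cdot h_K$ (from (2) and (i)), the polarisation (3), and the $\fp_\chi$-indivisibility just obtained---and yields $\rank_{\cO_{L_{\phi,\chi}}}A_\chi(K)=1=\ord_{s=1}L(s,A_\chi/K)$ together with $\Sha(A_\chi/K)[\fp_\chi^\infty]=0$. This gives the rank identity, condition (a) of $p$-regularity, and, via \cite{MRS}, $\Sha(A/H_K)^\chi[\fp_\chi^\infty]=0$.

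For condition (b) I would apply Proposition \ref{explicit1}, all of whose hypotheses are implied by ours (its condition on $N^-$ having an even number of prime factors being our (iii)); it gives
\[
\ord_{\fp_\chi}\CL(A_\chi)=2\,\ord_{\fp_\chi}\bigl[A_\chi(K):\cO_{L_{\phi,\chi}}P_f(\chi)\bigr]-2\sum_{\ell\mid N^+,\ \chi|_{D_\ell}=1}\ord_p(c_\ell),
\]
and by (2) the correction sum vanishes identically. The step I expect to demand the most care---although it is purely formal---is to convert $\fp_\chi$-indivisibility of $P_f(\chi)$ in $A(H_\chi)$, together with $A_\chi(K)[\fp_\chi]=0$ and $\rank_{\cO_{L_{\phi,\chi}}}A_\chi(K)=1$, into the statement that $P_f(\chi)$ generates a $\fp_\chi$-saturated rank-one $\cO_{L_{\phi,\chi}}$-sublattice of $A_\chi(K)$, so that the displayed index is a $\fp_\chi$-unit. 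Granting this, $\ord_{\fp_\chi}\CL(A_\chi)=0$, so $\CL(A_\chi)$ has trivial $\fp_\chi$-part and $\chi$ is $p$-regular; summing the lower bound over the finitely many $S$-types then finishes the proof. The one remaining subtlety, handled exactly as in the proof of Theorem \ref{main'1}, is to make the various ``$|D_K|\gg0$'' constraints uniform so that they discard only finitely many $K$.
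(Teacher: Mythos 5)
Your proposal is correct and follows essentially the same route as the paper, whose proof of this corollary is simply a citation of the proof of Theorem \ref{main'1} (root number $-1$ branch: Lemma \ref{embedding}, Theorem \ref{main3}, Proposition \ref{Ko}) together with Proposition \ref{explicit1}. The extra details you supply — deducing non-torsionness of $P_f(\chi)$ from $\fp_\chi$-indivisibility plus absence of $\fp_\chi$-torsion, and converting indivisibility into the index $[A_\chi(K):\cO_{L_{\phi,\chi}}P_f(\chi)]$ being a $\fp_\chi$-unit — are exactly the steps the paper leaves implicit, and your treatment of them is sound.
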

\begin{proof}
This follows from the proof of Theorem \ref{main'1} along with Proposition \ref{Ko} and Proposition \ref{explicit1}.
\end{proof}

We end with results in the introduction.
\begin{remark}
(1). Theorem \ref{main0} and Theorem \ref{main1} follow from Theorem \ref{main'1}.


(2). Theorem \ref{main2} follows from Corollary \ref{explicit'0} and Corollary \ref{explicit'1}.
\end{remark}

\thebibliography{99} 
\bibitem{Be} O. Beckwith, \emph{Indivisibility of class numbers of imaginary quadratic fields}, Preprint, 
arXiv:1612.04443. 

\bibitem{BD} M. Bertolini and H. Darmon, \emph{Iwasawa's main conjecture for elliptic curves over anticyclotomic $\BZ_p$-extensions},  Ann. of Math. (2) 162 (2005), no. 1, 1--64.

\bibitem{BDP1} M. Bertolini, H. Darmon and K. Prasanna, \emph{Generalised Heegner cycles and $p$-adic Rankin L-series},
 Duke Math. J. 162 (2013), no. 6, 1033--1148.

\bibitem{BDP2} M. Bertolini, H. Darmon and K. Prasanna, \emph{$p$-adic Rankin L-series and rational points on CM elliptic curves},
Pacific Journal of Mathematics, Vol. 260, No. 2, 2012. 261--303. 

\bibitem{BKLPR} M. Bhargava, B. Kane, H. Lenstra, B. Poonen and E. Rains, \emph{Modeling the distribution of ranks, Selmer groups, and Shafarevich-Tate groups of elliptic curves}, Camb. J. Math. 3 (2015), no. 3, 275--321. 

\bibitem{Bro} E. Brooks, \emph{Shimura curves and special values of $p$-adic L-functions}, Int. Math. Res. Not. IMRN 2015, no. 12, 4177--4241. 

\bibitem{Br} J. Bruinier, \emph{Nonvanishing modulo l of Fourier coefficients of half-integral weight modular forms}, Duke Math. J. 98 (1999), no. 3, 595--611.

\bibitem{BCW} D. Burns, D. Macias Castillo and C. Wuthrich, \emph{On Mordell-Weil groups and congruences between derivatives of twisted Hasse-Weil L-functions},
to appear in J. reine u. angew. Math .

\bibitem{Bu} A. Burungale, \emph{On the non-triviality of the $p$-adic Abel-Jacobi image of generalised Heegner cycles modulo $p$, II: Shimura curves}, J. Inst. Math. Jussieu 16 (2017), no. 1, 189--222 . 

\bibitem{BH} A. Burungale and H. Hida, {\em Andr\'e-Oort conjecture and non-vanishing of central L-values over Hilbert class fields}, Forum of Mathematics, Sigma 4 (2016), e8, 26 pp.

\bibitem{BuSu} A. Burungale and H.-S. Sun, \emph{Quantitative non-vanishing of Direichlet $L$-values modulo $p$}, preprint, 2017.

\bibitem{BT} A. Burungale and Y. Tian, {\em Horizontal non-vanishing of Heegner points and toric periods}, preprint, 2017.

\bibitem{CST} L. Cai, J. Shu, Y. Tian, {\em Explicit Gross-Zagier and Waldspurger formulae}, Algebra and Number Theory 8 (2014), no. 10, 2523--2572.

\bibitem{CH} M. Chida and M.-L. Hsieh, \emph{Special values of anticyclotomic L-functions for modular forms}, to appear in J. reine angew. Math. 

\bibitem{C} C. Cornut, \emph{Mazur's conjecture on higher Heegner points}, Invent. Math. 148 (2002), no. 3, 495--523. 

\bibitem{De} C. Delaunay, \emph{Heuristics on class groups and on Tate-Shafarevich groups: the magic of the Cohen-Lenstra heuristics, Ranks of elliptic curves and random matrix theory}, 
London Math. Soc. Lecture Note Ser., vol. 341, Cambridge Univ. Press, Cambridge, 2007, pp. 323--340. 

\bibitem{De2} P. Deligne, {\em Vari\'et\'es de Shimura: interpr\'etation modulaire, et techniques de construction de mod$\acute{e}$les canoniques},  
Automorphic forms, representations and L-functions (Proc. Sympos. Pure Math., Oregon State Univ., Corvallis, Ore., 1977), Part 2,  247-289, Proc. Sympos. Pure Math., XXXIII, Amer. Math. Soc., Providence, R.I., 1979.

\bibitem{FW} B. Ferrero and L. Washington, \emph{The Iwasawa invariant $\mu_p$ vanishes for abelian number fields}, Ann. of Math. (2) 109 (1979) 377--395.

\bibitem{GZ} B. Gross and D. Zagier, {\em Heegner points and derivatives of L-series}. Invent. Math. 84 (1986), no. 2, 225-320.

\bibitem{G} B. Gross, \emph{Kolyvagin's work on modular elliptic curves}, L-functions and arithmetic (Durham, 1989), 235-256, London Math. Soc. Lecture Note Ser., 153, Cambridge Univ. Press, Cambridge, 1991.

\bibitem{Ha1} M. Harris, {\em Arithmetic vector bundles and automorphic forms on Shimura varieties. I},
Invent. Math. 82 (1985), no. 1, 151–189.

\bibitem{HMF} H. Hida, {\em Hilbert modular forms and Iwasawa theory },
Oxford Mathematical Monographs. The Clarendon Press, Oxford University Press, Oxford, 2006. xiv+402 pp.

\bibitem{Hi5} H. Hida, \emph{Elliptic Curves and Arithmetic Invariants},
Springer Monographs in Mathematics, Springer, New York, 2013. xviii+449 pp.

\bibitem{I} H. Iwaniec, \emph{Fourier coefficients of modular forms of half-integral weight},
 Invent. Math. 87 (1987), no. 2, 385-401.

 \bibitem{KhKi} C. Khare and I. Kiming, \emph{Mod $pq$ Galois representations and Serre's conjecture},
J. Number Theory 98 (2003), no. 2, 329--347.

\bibitem{Kh} C. Khare, \emph{On isomorphisms between deformation rings and Hecke rings},
Invent. Math. 154 (2003), no. 1, 199--222. 

\bibitem{KhWi} C. Khare and J.-P. Wintenberger, \emph{Serre's modularity conjecture. I.}, Invent. Math. 178 (2009), no. 3, 485--504. 

\bibitem{Ko} V. Kolyvagin, \emph{Euler systems}, The Grothendieck Festschrift, Vol. II, 435--483, Progr. Math., 87, Birkh鋟ser Boston, Boston, MA, 1990.

\bibitem{LZZ} Y. Liu, S. Zhang and W. Zhang, \emph{On $p$-adic Waldspurger formula}, preprint, 2014,
available at http://www.math.mit.edu/~liuyf/ .

\bibitem{MRS} B. Mazur, K. Rubin and A. Silverberg, \emph{Twisting commutative algebraic groups},  J. Algebra 314 (2007), no. 1, 419--438.

\bibitem{MW} B. Mazur and A. Wiles, \emph{Class fields of abelian extensions of $\BQ$},
Invent. Math. 76 (1984), no. 2, 179--330.

\bibitem{M} P. Michel, \emph{The subconvexity problem for Rankin-Selberg L-functions and equidistribution of Heegner points},
Ann. of Math. (2) 160 (2004), no. 1, 185--236. 

\bibitem{MV1} P. Michel and A. Venkatesh, {\em Heegner points and non-vanishing of Rankin/Selberg L-functions},
Analytic number theory, 169-183, Clay Math. Proc., 7, Amer. Math. Soc., Providence, RI, 2007.

\bibitem{MV} P. Michel and A. Venkatesh, \emph{The subconvexity problem for $\GL_2$},
Publ. Math. Inst. Hautes 钃唘des Sci. No. 111 (2010), 171--271.

\bibitem{Mo} F. Momose, \emph{On the $l$-adic representations attached to modular forms}, 
J. Fac. Sci. Univ. Tokyo Sect. IA Math. 28 (1981), no. 1, 89--109. 

\bibitem{N0} J. Nekov\'a\v{r}, \emph{The Euler system method for CM points on Shimura curves},
L-functions and Galois representations, 471--547, London Math. Soc. Lecture Note Ser., 320, Cambridge Univ. Press, Cambridge, 2007.

\bibitem{N} J. Nekov\'a\v{r}, {\em Level raising and anticyclotomic Selmer groups for Hilbert modular forms of weight two},
Canad. J. Math. 64 (2012), no. 3, 588--668.

\bibitem{PW} R. Pollack and T. Weston, \emph{On anticyclotomic $\mu$-invariants of modular forms},
 Compos. Math. 147 (2011), no. 5, 1353--1381.

\bibitem{Ri} K. Ribet, \emph{Twists of modular forms and endomorphisms of abelian varieties}, 
Math. Ann. 253 (1980), no. 1, 43--62.

\bibitem{R} G. Robin, \emph{Irr\'egularit\'es dans la distribution des nombres premiers dans les progressions arithm\'etiques},  Ann. Fac. Sci. Toulouse Math. (5) 8 (1986/87), no. 2, 159--173.

\bibitem{S} H. Saito, \emph{On Tunnell's formula for characters of $\GL(2)$}, Compositio Math. 85 (1993), no. 1, 99--108.

\bibitem{Si} W. Sinnott, \emph{On a theorem of L. Washington}, Journees arithmetiques (Besancon 1985), Asterisque 147-148,
Societe Mathematique de France, Paris (1987), 209--224

\bibitem{Te} N. Templier, \emph{Sur le rang des courbes elliptiques sur les corps de classes de Hilbert}, Compos. Math. 147 (2011), no. 4, 1087--1104. 

\bibitem{T0} J. Tunnell,
{\em On the local Langlands conjecture for $\GL(2)$}, Invent. Math. 46
(1978), 179-200.

\bibitem{T} J. Tunnel, \emph{Local $\epsilon$-factors and characters of $\GL(2)$}, Amer. J. Math. 105 (1983), no. 6, 1277--1307.

\bibitem{V} V. Vatsal, \emph{Uniform distribution of Heegner points}, Invent. Math. 148, 1--48 (2002).

\bibitem{V1} V. Vatsal, \emph{Special values of $L$-functions modulo $p$}, International Congress of Mathematicians. Vol. II, 501--514, Eur. Math. Soc., Z\"{u}rich, 2006.

\bibitem{Wa1} J.-L. Waldspurger, {\em Sur les valeurs de certaines fonctions $L$
automorphes en leur centre de sym\'{e}trie}, Compositio Math. 54 (1985),
  no. 2, 173-242.

\bibitem{Wa} L. Washington, \emph{The non-$p$-part of the class number in a cyclotomic $\BZ_p$-extension}, Invent. Math. 49 (1978),
87--97.

\bibitem{W} A. Wiles, \emph{The Iwasawa conjecture for totally real fields},
Ann. of Math. (2) 131 (1990), no. 3, 493--540.

\bibitem{W} A. Wiles, \emph{On class groups of imaginary quadratic fields}, J. Lond. Math. Soc. (2) 92 (2015), no. 2, 411--426.

\bibitem{YZZ} X. Yuan, S.-W. Zhang and W. Zhang, \emph{The Gross--Zagier Formula on Shimura Curves}, Annals of Mathematics Studies, Princeton University Press, 2012.

\bibitem{Zh} S.-W. Zhang, \emph{Equidistribution of CM-points on quaternion Shimura varieties},  Int. Math. Res. Not. 2005, no. 59, 3657--3689.

\bibitem{Z} W. Zhang, \emph{Selmer groups and the indivisibility of Heegner points},
Cambridge Journal of Math., Vol. 2 (2014), No. 2, 191--253.

\end{document}